\definecolor{darkolivegreen}{rgb}{0.33, 0.42, 0.18}
\newcommand{\lineblue}{%
  \tikz[baseline=-0.75ex] \draw[-, blue, very thick] (0,0) -- (1,0);%
}
\newcommand{\linepurple}{%
  \tikz[baseline=-0.75ex] \draw[-, purple!70!black, very thick] (0,0) -- (1,0);%
}
\newcommand{\lineorange}{%
  \tikz[baseline=-0.75ex] \draw[-, orange, very thick] (0,0) -- (1,0);%
}
\newcommand{\linegreen}{%
  \tikz[baseline=-0.75ex] \draw[-, darkolivegreen, very thick] (0,0) -- (1,0);%
}
\newcommand{\rv}[1]{{#1}}
\newcommand{\EE}{\mathbb{E}}
\newcommand{\PP}{\mathbb{P}}
\newcommand{\RR}{\mathbb{R}}
\newcommand{\NN}{\mathbb{N}}
\newcommand{\CC}{\mathbb{C}}
\newcommand{\sphere}{\mathbb{S}}
\newcommand{\cN}{\mathcal{N}}
\newcommand{\cS}{\mathcal{S}}
\newcommand{\herA}{A^{\textup{H}}}
\newcommand{\scp}[2]{\left\langle{#1}, {#2}\right\rangle}
\newcommand{\scpB}[2]{\left\langle{#1}, {#2}\right\rangle_B}
\DeclarePairedDelimiter{\abs}{\vert}{\vert}
\DeclarePairedDelimiter{\norm}{\|}{\|}
\DeclarePairedDelimiter{\normB}{\|}{\|_{B}}
\DeclareMathOperator{\sign}{sign}
\DeclareMathOperator{\diag}{diag}
\let\dim\relax
\DeclareMathOperator{\dim}{dim}
\DeclareMathOperator*{\argmax}{argmax}
\DeclareMathOperator*{\dist}{dist}
\DeclareMathOperator*{\grad}{grad}
\newcommand{\tT}{\mathrm{T}}
\newcommand{\pd}{\mathrm{Sym}_{\succ 0}^d}
\newcommand{\RE}{\operatorname{Re}}
\newcommand{\IM}{\operatorname{Im}}
\newcommand{\dx}{\mathrm{d}}
\newtheorem{theorem}{Theorem}[section]
\newtheorem{lemma}[theorem]{Lemma}
\newtheorem{remark}{Remark}[section]
\begin{document}

\title{Stochastic Zeroth-Order Method for Computing  Generalized Rayleigh Quotients}

\author{Jonas Bresch\thanks{Technische Universität Berlin, Straße des 17. Juni 136, Berlin, 10587, Germany} \\
{\footnotesize\href{mailto:bresch@math.tu-berlin.de}{bresch@math.tu-berlin.de}}
\and Oleh Melnyk\footnotemark[1] \\
{\footnotesize\href{mailto:melnyk@math.tu-berlin.de}{melnyk@math.tu-berlin.de}} 
\and Martin Schoen\footnotemark[1]
\and Gabriele Steidl\footnotemark[1] \\
{\footnotesize\href{mailto:steidl@math.tu-berlin.de}{steidl@math.tu-berlin.de}}}

\maketitle

\begin{abstract}
The maximization of the (generalized) Rayleigh quotient is a central problem in numerical linear algebra. Conventional algorithms for its computation typically rely on matrix–adjoint products, making them sensitive to errors arising from adjoint mismatches. To address this issue, we introduce a stochastic zeroth-order Riemannian algorithm that maximizes the generalized Rayleigh quotient without requiring adjoint or matrix inverse computations. We provide theoretical convergence guarantees showing that the iterates converge to the set of global maximizers of the (generalized) Rayleigh quotient and the norm of the Riemannian gradient vanishes at a sublinear rate with probability one.
Our theoretical results are supported by numerical experiments, which demonstrate the excellent performance of the proposed method compared to state-of-the-art algorithms.
\end{abstract}

\textbf{Keywords.}
generalized Rayleigh quotient $\cdot$
spectral norm $\cdot$
stochastic optimization $\cdot$
zeroth-order optimization $\cdot$
Riemannian optimization

\textbf{MSC.} 58C40 
$\cdot$ 65F15  
$\cdot$ 65F35 
$\cdot$ 15A60 
$\cdot$ 68W20

\section{Introduction}\label{sec:intro}
In this paper, 
we are interested in the maximization of the generalized Rayleigh quotient  
\begin{equation}\label{eq: intro quotient}
    \mathcal R(A,B) = \max_{v \in \RR^d\setminus\{0\}} \frac{\scp{v}{Av}}{\scp{v}{Bv}}
\end{equation}
without explicitly using the inverse of the positive definite $B \in \mathbb R^{d\times d}$ 
or the transpose of $A\in \mathbb R^{d\times d}$.
The maximization of \eqref{eq: intro quotient} is a fundamental problem 
in various applications such as spectral equivalence 
of the matrices $A$ and $B$ \cite{axelsson2001finite}, 
generalized singular value \cite{golub2013}, 
and tensor \cite{KoMa14} decompositions.
For the identity matrix $B$, 
the Rayleigh quotient is also known as the numerical abscissa 
and is used for the stability analysis of nonsymmetric matrices 
in partial differential equations \cite{farrell1996generalizedstabilitytheorypartIInonautonomousoperators, trefethen2005spectrapseudospectra, benzi2021someusenumericalanalysis}.

The maximization of \eqref{eq: intro quotient} 
can be performed by a number of methods. 
Classical iterative schemes for real eigenvalue problems 
use Rayleigh quotient iterations \cite{vonMises1929power, Muntz1913a, Muntz1913b, parlett1974raylightquotientiteration} 
and their block \cite{knyazev2007block}, Krylov-style \cite{parlett1998symmetriceigenvalueproblem},
or matrix-free \cite{knyazev2001matrixfreekrylovpcgmethod} variants.
A more robust class of approaches 
is based on the min--max characterizations 
for generalized eigenvalue problems \cite{schanze2023robustraylight, zhaojun2018robustraylight, nishioka2025minmaxgeneralizedeigenvalue}. 
Alternatively, 
maximization of \eqref{eq: intro quotient} 
can be performed using Riemannian optimization techniques \cite{zhang2016riemannian,alimisis2021distributed, alimisis2024geodesic}.
Another class of algorithms relies on constructing rank-one perturbations of $A$ 
leading to a stable approximation of the largest eigenvalue
\cite{guglielmi2011fastaglorithmapproxpseudoabscissa}.
These can be prohibitively expensive for large-scale matrices, 
and a more scalable algorithmic approach based on subspace methods 
was proposed in \cite{kressner2014subspacepseudospectralabscissa,ding2017computingrealpseudoabscissa}. 
Sketching methods \cite{halko2011randomapproximatematrixdecomposition, li2014sketching, tropp2020computationalframework} 
solve the problem using random dimensional reduction techniques. 
All the above methods rely on matrix–adjoint product 
and/or require access to the inverse of $B$.

The motivation for developing inverse- and adjoint-free methods 
stems from two main considerations.
First, 
computing $B^{-1}$ or performing matrix–vector products involving it,
is computationally expensive and susceptible to numerical inaccuracies. 
Second, 
in imaging applications 
such as computed tomography \cite{buzug2008ct, xie2015effective, zhang2016unmatchedprojback, peterson2017monte}, 
the transpose $A^\tT$ is often replaced by an approximate,
but computationally tractable operator. 
This substitution introduces what is known as adjoint mismatch, 
which can lead to significant reconstruction errors.

When $A^\tT$ and $B^{-1}$ are unavailable, 
zeroth-order optimization methods can be employed, 
as they rely solely on evaluation of products with $A$ and $B$. 
These methods approximate the gradient using finite-difference schemes \cite{chan1998transpose,balasubramanian2022zeroth,li2023stochastic}. 
In the context of Rayleigh quotient optimization,
the bundled gradient method \cite{burke2002optimizingmatrixstability} 
or Oja's algorithm and its versions \cite{oja1982neuronmodelpca} 
were employed, which require very limited storage 
and involve computationally simple updates. 
Another promising approach is consensus-based optimization \cite{riedl2024consensusbasedoptimization,fornasier2025regularitypositivitysolutionsconsensusbased,fornasier2021consensusbasedhypersurfaces,fornasier2021consensusbasedsphere},
where the search space is explored by a system of interacting particles 
governed by coupled stochastic differential equations
that balance random exploration with attraction 
toward the current best estimate of the optimum.

In this paper, 
we propose a new simple and efficient algorithm 
for solving \eqref{eq: intro quotient} 
that combines ideas from stochastic zeroth-order Riemannian optimization \cite{li2023stochastic} 
and slicing methods \cite{quellmalz2023slicing,quellmalz2024slicing}. 
Namely, 
we iteratively solve the maximization problem 
in a randomly sampled one-dimensional subspace. 
This yields a provably convergent algorithms 
that neither involve the computation of $B^{-1}$ nor $A^\tT$, 
and each iteration requires only a limited number of matrix-vector products. 
Numerical experiments show the outstanding performance 
of our method compared to other techniques in the literature. 
Our algorithms can be extended for computing the maximal real generalized Rayleigh quotient 
for complex matrices $A,B \in \mathbb C^{d \times d}$.

\paragraph{Outline of the Paper}
In Section~\ref{sec:preliminaries}, 
we start with preliminaries on generalized Rayleigh quotients 
and first- and zeroth-order  Riemannian optimization. 
Our algorithm is presented in Section~\ref{sec:gRq}.
We study its termination behavior in dependence on the dimension of the
eigenspace belonging to $\mathcal R(A,B)$
in Section~\ref{sec:termination}. 
If this dimension is smaller than $d-1$,  
we derive convergence to a global maxima in Section~\ref{sec:conv} 
and establish convergence rates in Section~\ref{sec:convergence_rates}. 
Section~\ref{sec: zeroth-order link} gives an interesting reinterpretation of our algorithms 
as Riemannian zeroth-order method. 
The theoretical findings are substantiated by numerical examples, 
including comparisons with other algorithms in Section~\ref{sec:num}.
Appendix~\ref{sec:proofs} contains proofs of technical lemmas. 
Remarkably, 
Appendix~\ref{sec: complex} extends our algorithm to the complex setting $A,B \in \mathbb C^{d \times d}$.

This paper extends the preprint \cite{bresch2024matrixfreestochasticcalculationoperator} 
presented by one of the authors at the ILAS Conference 2025
to generalized Rayleigh quotients and Riemannian geometry, 
and incorporates additional clarifications and substantial improvements.

\section{Preliminaries}\label{sec:preliminaries}
We start by recalling generalized Rayleigh quotients
and methods for its computation from the Riemannian optimization point of view.

\subsection{Generalized Rayleigh Quotients}
Throughout the paper, let
$\norm{\cdot}$ be the Euclidean norm induced by the inner product $\scp{\cdot}{\cdot}$
and $I_d \in \RR^{d\times d}$ the identity matrix.
For a matrix $A \in \RR^{d\times d}$, we denote by
\[
    \herA \coloneqq \tfrac{1}{2}(A +  A^\tT) 
\]
its symmetric part. By the symmetry of the inner product, we have 
\begin{equation}\label{sym}
    \langle v,Av \rangle = 
    \tfrac12 (\langle v,Av \rangle + \langle Av,v \rangle) 
    =
    \tfrac12 (\langle v,Av \rangle + \langle v,A^\tT v \rangle) 
    = \langle v, \herA v \rangle.
\end{equation}
Let $\pd \subset \RR^{d\times d}$ be the set of symmetric, positive definite matrices.
Then, for $A \in \RR^{d\times d}$ and $B \in \pd$, 
we define the \emph{generalized Rayleigh quotient} of $A$ and $B$ with respect to 
$v \in  \RR^d\setminus\{0\}$ by
\begin{equation}\label{eq:ray}
    r(A, B, v) 
        \coloneqq \frac{\scp{v}{Av}}{\scp{v}{Bv}}
    =\frac{\scp{v}{\herA v}}{\scp{v}{Bv}}.
\end{equation}
We are interested in computing its maximum 
\begin{equation}    \label{eq:gRq}
    \mathcal R(A, B) 
    \coloneqq \max_{v \in \RR^d\setminus\{0\}} r(A, B, v) = \max_{v \in \sphere_B^{d-1}} \; \scp{v}{A v},
\end{equation}
where  $\sphere_B^{d-1}$ denotes the unit sphere with respect to $B$ given by
\begin{equation}
    \sphere_B^{d-1} \coloneqq \{v \in \RR^d : \normB{v} = 1\}
\end{equation}
and $\normB{v}^2 \coloneqq \scpB{v}{v} \coloneqq \scp{v}{Bv}$.
For $B = I_d$, we just set $\mathbb S^{d-1} \coloneqq \sphere_{I_d}^{d-1}$.
Figure \ref{fig:R2x2_problems} visualizes the
Rayleigh quotient and, in particular, the constrained part of interest on $\sphere_B^{d-1}$.

\begin{figure}[t!]
\begin{center}
    \includegraphics[width=0.35\linewidth, clip=true, trim=0pt 25pt 40pt 50pt]{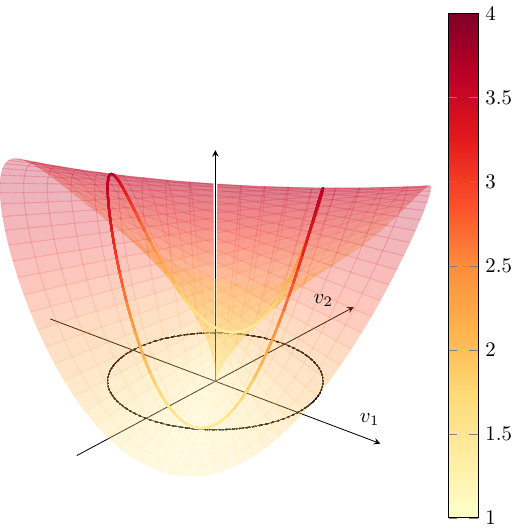}
    \hspace{0.3cm}
    \includegraphics[width=0.35\linewidth, clip=true, trim=0pt 25pt 40pt 50pt]{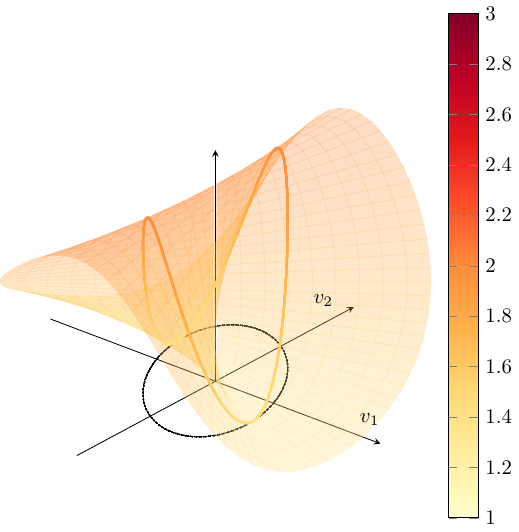} 
    \hspace{0.05cm}
    \raisebox{10pt}{\includegraphics[width=0.04\linewidth, clip=true, trim=210pt 0pt 0pt 0pt]{pdf_plots/R2x2_problem_bar.pdf}}
    \end{center}
    \caption{%
    Visualization of  the generalized Rayleigh quotient $r(A,B,v)$ 
    for $v = (v_1,v_2)^\tT \in \RR^2\setminus\{0\}$ 
    and 
    $A = \left(\begin{smallmatrix}
            3 & 1 \\ 1 & 2        
        \end{smallmatrix}\right)$.
    Its values on $ \sphere_B^1$ are highlighted by the solid line.
    \emph{Left}: $B= I_2$, \emph{Right}: $B= \left(\begin{smallmatrix}
        2 & 0.5 \\ 0.5 & 1        
    \end{smallmatrix}\right)$.
        }
    \label{fig:R2x2_problems}
\end{figure}

A value $\lambda \in \mathbb R$ is called a 
\emph{generalized eigenvalue} of $(\herA,B)$, it there exists
$v,u \in \RR^d\setminus\{0\}$ fulfilling
    \begin{equation}\label{g_eigenvalue}
    \herA v = \lambda B v
    \quad \text{or, equivalently,} \quad
    B^{-1} \herA v = \lambda v.
\end{equation}
The eigenvalues $\lambda_k(B^{-1}\herA)$ 
and eigenvectors $v^k$ of $B^{-1} \herA$ are also identified
with eigenvalues \smash{$\lambda_k(B^{-\frac12} \herA  B^{-\frac12})$}
and eigenvectors $u^k$ of a symmetric matrix \smash{$B^{-\frac12} \herA  B^{-\frac12}$}
through \smash{$\lambda_k(B^{-1}\herA) = \lambda_k(B^{-\frac12} \herA  B^{-\frac12})$} and $B^{\frac{1}{2}}u^k = v^k$.
We denote the corresponding eigenspaces by
\begin{equation*}
    G_\lambda \coloneqq \{v \in \RR^d\setminus \{0\}: 
    \herA v = \lambda B v \}.
\end{equation*}
Since eigenvectors of $B^{-\frac12} \herA  B^{-\frac12}$ span $\RR^d$, 
same holds true for eigenvectors of $B^{-1} \herA$ 
by $B$ being a full rank matrix. 
Yet, 
eigenspaces $G_\lambda$ are no longer orthogonal to each other. 
The value $\mathcal R(A,B)$ is the largest generalized eigenvalue of $(\herA,B)$
and we set 
\begin{equation*}
    G_{\max} \coloneqq G_{\mathcal R(A,B) } 
    \quad \text{and} \quad
    G \coloneqq \bigcup_{\lambda \text{ gen. eigen-} \atop \text{value of }(\herA,B)}  G_\lambda.
\end{equation*}
\begin{remark}[Numerical Abscissa]
The numerical range of $A \in \RR^{d \times d}$ is defined by
\begin{equation*}
    W(A) \coloneqq \Biggl\{
    \frac{\scp{z}{A z}}{\|z\|^2} : z \in \CC^d\setminus\{0\}
    \Biggr\}
    \qquad \text{and} \qquad 
    \omega(A) \coloneqq \max_{\lambda \in W(A)} \RE(\lambda)
\end{equation*}
is called \emph{numerical abscissa} of $A$ \cite[Eq.~17.22]{trefethen2005spectrapseudospectra}. 
This is equal to the largest eigenvalue 
of  $\herA$ by \cite[Eq.~2.2]{mitchell2023numradius}.
\end{remark}
In this paper, we are interested in the computation of $\mathcal R(A,B)$ without explicitly using the inverse of $B$ or the transpose of $A$. We will propose a new zeroth-order method on $\sphere^{d-1}_B$, which we prepare in the next section.

\subsection{Riemannian First- and Zeroth-Order Optimization on $\sphere^{d-1}_B$}
We review the main concepts of the Riemannian gradient ascent method 
on $\sphere^{d-1}_B$ based on \cite[§~3]{boumal2023introduction} 
and a stochastic zeroth-order method,
which we will use in the numerical part for comparisons.

The $(d-1)$-dimensional embedded submanifold $\sphere_B^{d-1} \subset \RR^d$
has the tangential space $T_v$ at $v \in \sphere^{d-1}_B$,
\begin{equation*}
    T_v = \{x \in \RR^d : \scp{x}{B v} = 0\},
\end{equation*}
and its orthogonal complement reads as 
\begin{equation*}
    T_v^\perp = \{ \lambda B v : \lambda \in \RR \}.
\end{equation*}
In particular, 
we see by \eqref{g_eigenvalue},
if $v \in \sphere^{d-1}_B \cap G$, 
then 
\begin{equation} \label{helper}
    \herA v \in T_v^\perp \quad \text{and} \quad
    (\text{span} \, \herA v)^\perp = T_v.
\end{equation}
The orthogonal projection $P_{v}: \RR^d \to T_v$ is given, for $y \in \RR^d$, by
\begin{equation*}
    P_v y = 
    \Bigl(I_d - \tfrac{Bv}{\norm{Bv}}\tfrac{(Bv)^\tT}{\norm{Bv}}\Bigr)y = 
    y - \langle  y, \tfrac{Bv}{\norm{Bv}} \rangle  \tfrac{Bv}{\norm{Bv}} .
\end{equation*}
To map points from $T_v$ back to $\sphere_B^{d-1}$, we will use the retraction $R_v : T_v \to \sphere_B^{d-1}$ defined by 
\begin{equation}
    \label{eq: retraction}
    R_v(x) \coloneqq \frac{v + x}{\normB{v + x}}.
\end{equation}
For a smooth function $f : \sphere_B^{d-1} \to \RR$, the Riemannian gradient is given by
\begin{equation}    \label{eq: riemannian gradient}
    \grad f(v) 
      = P_{v} \nabla \bar f(v) \in T_v
\end{equation}
where $\bar f : \RR^d \to \RR$ with $\bar f|_{\sphere_B^{d-1}} = f$ 
is a smooth extension of $f$ to $\RR^d$.
For optimizing \eqref{eq:gRq}, we consider
\begin{equation} \label{problem_riemann}
    f(v) \coloneqq \scp{v}{\herA v}, \quad \text{and} \quad
    \bar f(v) = \scp{v}{\herA v}, \quad v \in \RR^d.
\end{equation}
Its Riemannian gradient  is 
\begin{equation}\label{eq: riemannian gradient special}
    \grad f(v) = P_v \nabla \bar f(v)
    = 2
    \bigl(
    \herA v - \tfrac{ (B v)^\tT(\herA v)}{\norm{Bv}^2} 
    B v
    \bigr)
\end{equation}
The critical points $\{v \in \sphere_B^{d-1}: \grad f(v) =0\}$ 
admit $\herA v \in T_v^\perp$.

To solve \eqref{problem_riemann},
we can apply the \emph{Riemannian gradient ascent method}: 
starting in an arbitrary
$v^0 \in \sphere_B^{d-1}$, 
we compute for step size $\tau_k > 0$, $k \in \NN$ the updates 
\begin{equation}
    \label{eq: Riemmanian gradient ascent}
    v^{k+1} = R_{v^k}(\tau_k \, \grad f(v^k) ) 
    = \frac{v^{k} + \tau_k \, \grad f(v^k)}{\normB{v^{k} + \tau_k \, \grad f(v^k)}}.
\end{equation}
The convergence of the Riemannian gradient ascent method is ensured by the following theorem.

\begin{theorem}\label{prop1}
    Let $f$ be defined by \eqref{problem_riemann} and
    $L \ge 2 \norm{\herA}(1 + \kappa(B))$, 
    where $\kappa(B) \coloneq \|B\| \|B^{-1}\|$ is the condition number of $B$.
    Then the sequence $(v^k)_{k=0}^{\infty}$ generated by \eqref{eq: Riemmanian gradient ascent} 
    with $\tau_k = 1/L$ fulfills 
    $\grad f(v^k) \to 0$ as $k \to \infty$ at a sublinear rate, meaning that
\begin{equation*}
    \label{eq: conv sublinear}
    \min_{k=0,\ldots,n} \norm{\grad f(v^k)}^2 
    \le \frac{2 L}{n+1} \big(\mathcal R(A,B) - f(v^0) \big).
\end{equation*}    
\end{theorem}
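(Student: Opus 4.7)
The proof follows the standard ascent-lemma-plus-telescoping template for gradient-type methods. The main work is to establish the sufficient-ascent inequality
\[
f(v^{k+1}) \geq f(v^k) + \tfrac{1}{2L}\norm{\grad f(v^k)}^2,
\]
after which the claimed sublinear rate follows immediately by telescoping and using boundedness of $f$ on the compact manifold $\sphere_B^{d-1}$.

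To prove the ascent inequality, I would establish the quadratic pullback bound
\[
f(R_v(x)) \geq f(v) + \scp{\grad f(v)}{x} - \tfrac{L}{2}\norm{x}^2 \qquad (x \in T_v)
\]
and then substitute $x = \grad f(v^k)/L$. The pullback bound itself follows from a Boumal-style argument for retractions that are not second-order: if $\bar f$ has Euclidean-Lipschitz gradient with constant $L_{\bar f}$ and the retraction's acceleration satisfies $\norm{D^2 R_v(0)(x,x)} \leq \beta\norm{x}^2$, then the pullback is effectively $(L_{\bar f} + \beta\norm{\nabla\bar f(v)})$-smooth at the origin. Here $\bar f(v) = \scp{v}{\herA v}$ gives $L_{\bar f} = 2\norm{\herA}$ (from the Hessian $2\herA$) and $\norm{\nabla \bar f(v)} = 2\norm{\herA v} \leq 2\norm{\herA}\sqrt{\norm{B^{-1}}}$ on $\sphere_B^{d-1}$. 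A direct Taylor expansion of $R_v(tx) = (v+tx)/\normB{v+tx}$ at $t=0$ gives $D^2 R_v(0)(x,x) = -\normB{x}^2\, v$, so $\beta \leq \norm{B}\sqrt{\norm{B^{-1}}}$. Combining yields
\[
L_{\bar f} + \beta\norm{\nabla \bar f(v)} \leq 2\norm{\herA}\big(1 + \norm{B}\norm{B^{-1}}\big) = 2\norm{\herA}(1+\kappa(B)),
\]
which is precisely the constant in the statement. The $\kappa(B)$ factor arises because $R_v$ fails to be a second-order retraction with respect to the Euclidean metric on $T_v$: its acceleration $-\normB{x}^2 v$ lies in $T_v^\perp$ only for the $B$-inner product.

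Telescoping the ascent inequality from $k=0$ to $n$ yields $\sum_{k=0}^n\norm{\grad f(v^k)}^2 \leq 2L\big(f(v^{n+1}) - f(v^0)\big)$, which is bounded above by $2L(\sup f - f(v^0))$ since $f$ is continuous on the compact manifold $\sphere_B^{d-1}$. The inequality $(n+1)\min_{k\leq n}\norm{\grad f(v^k)}^2 \leq \sum_{k=0}^n \norm{\grad f(v^k)}^2$ then delivers the advertised rate (the quantity $f(v^0)-\inf f$ in the statement reads more naturally as $\sup f - f(v^0)$ for an ascent method, but both are bounded by the diameter of $f$ on $\sphere_B^{d-1}$). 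I expect the main technical obstacle to be the pullback estimate itself---in particular, controlling the higher-order Taylor remainder and cleanly isolating the acceleration contribution that produces the $\kappa(B)$ factor; once the Boumal-style bound with the correct constant is in hand, the rest of the argument is routine.
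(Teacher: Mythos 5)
Your overall route is the same as the paper's: both proofs reduce the theorem to the retraction-smoothness inequality $\abs{f(R_v(x)) - f(v) - \scp{\grad f(v)}{x}} \le \tfrac{L}{2}\norm{x}^2$ for all $x \in T_v$, and then invoke the standard sufficient-ascent/telescoping machinery (the paper delegates this second part to Corollary~4.8 of Boumal, which is exactly your ascent-plus-telescoping argument; your remark about $f(v^0)-\inf f$ versus $\sup f - f(v^0)$ is a fair observation about the statement and is handled the same way in spirit). The genuine gap is in the first part: you do not actually prove the pullback bound. Your argument controls only the second-order behavior of $t \mapsto R_v(tx)$ at $t=0$ (the initial acceleration $-\normB{x}^2 v$), and you yourself flag the higher-order remainder as an unresolved obstacle. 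A rigorous Boumal-style lemma of the kind you invoke needs velocity and acceleration bounds along the \emph{whole} curve $t \mapsto R_v(tx)$, $t\in[0,1]$, not just at the origin; for this retraction $\norm{\tfrac{d}{dt}R_v(tx)}$ is only bounded by a multiple of $\sqrt{\kappa(B)}\norm{x}$ away from $t=0$, so the curve-based constant picks up an extra factor of roughly $\kappa(B)$ on the $L_{\bar f}$-term and one does \emph{not} recover $2\norm{\herA}(1+\kappa(B))$; the agreement of constants in your sketch is an artifact of evaluating only at $t=0$. Since the theorem fixes the step size $\tau_k = 1/L$ with precisely this $L$, proving the inequality only with a larger constant would not establish the statement as given.

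The paper avoids Taylor expansion entirely by exploiting that $f$ is an exact quadratic and that $x \perp_B v$ gives $\normB{v+x}^2 = 1 + \normB{x}^2$ exactly: one computes
\begin{equation}
f(v) + \scp{\grad f(v)}{x} = \scp{v+x}{\herA(v+x)} - \scp{x}{\herA x} = (1+\normB{x}^2)\, f(R_v(x)) - \scp{x}{\herA x},
\end{equation}
so the error term is exactly $\scp{x}{\herA x} - \normB{x}^2 f(R_v(x))$, which is bounded by $\norm{\herA}\bigl(1 + \norm{R_v(x)}^2\norm{B}\bigr)\norm{x}^2 \le \norm{\herA}(1+\kappa(B))\norm{x}^2$ using $\normB{R_v(x)} = 1$ and the norm equivalence between $\norm{\cdot}$ and $\normB{\cdot}$. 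This exact identity is the missing ingredient in your proposal: with it, there is no remainder to control and the stated constant (and hence the stated step size and rate) follows; without it, your argument does not close.
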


\begin{proof}
    The statement follows from \cite[Cor.~4.8]{boumal2023introduction} 
    applied for minimization of $-f$. 
    We can apply the cited statement if $-f$ is bounded from below and fulfills
    \begin{equation}
        \label{eq: retraction smoothness}
        | f(R_v(x)) - f(v) - \scp{\grad f(v)}{x}| \le \frac{L}{2} \norm{x}^2
    \end{equation}
    for all $x \in T_v$ and $v \in \sphere_{B}^{d-1}$.
    Indeed, our function $-f$ is bounded from below by $-\mathcal R(A,B)$.
    Further,
    \[
        \scp{\grad f(v)}{x} = 2\scp{P_v \herA v}{x} = 2\scp{\herA v}{P_v x} = 2\scp{\herA v}{x}
    \]
    and
    \begin{align*}
        f(v) + \scp{\grad f(v)}{x} 
        & = \scp{v}{\herA v} + 2\scp{\herA v}{x} \\
        & = \scp{v + x}{\herA (v +x)} - \scp{x}{\herA x} \\
        & = \normB{v + x}^2 \scp{R_v(x)}{\herA R_v(x)} - \scp{x}{\herA x}\\
        &= (1 + \normB{x}^2)\scp{R_v(x)}{\herA R_v(x)} - \scp{x}{\herA x}
    \end{align*}
    yields with 
    \begin{equation} \label{eq: norm bounds}
        \|B^{-1}\|^{-1} \|y\|^2\ \le \|y\|^2_B \le \|B\| \|y\|^2  
        \quad \text{for all} 
        \quad y \in \RR^d  
    \end{equation} 
    that
    \begin{align*}
        | f(R_v(x)) - f(v) - \scp{\grad f(v)}{x} | 
         &
         = | \scp{R_v(x)}{\herA R_v(x)}  \normB{x}^2
        + \scp{x}{\herA x} | \\
        &  
        \le \norm{\herA} ( \norm{R_v(x)}^2  \normB{x}^{2} 
        + \norm{x}^2)\\
        &\le \norm{\herA} ( \norm{R_v(x)}^2  \|B\| + 1) \norm{x}^2\\
        &\le \norm{\herA} (  \|B\| \cdot \|B^{-1}\| + 1) \norm{x}^2,
    \end{align*}
    and by \eqref{eq: retraction smoothness} with $L \geq 2 \|\herA\| (1 + \kappa(B))$.
\end{proof}

In the special case $B = I_d$ and $\herA \in \pd$,
the convergence result can be strengthened to convergence to the set of global maximizers 
based on \cite{alimisis2021distributed}.

Computing $\grad f$ requires evaluating products with $A^\tT$,
which we want to avoid. This can be done by using, 
e.g. the \emph{stochastic zeroth-order optimization} 
proposed for general embedded manifolds in \cite{li2023stochastic}: 
starting in a arbitrary $v^0 \in \sphere_B^{d-1}$, 
the methods computes for step size $\tau_k > 0$, $k \in \NN$ the updates  
\begin{equation}
    \label{eq: zero order ascent}
    v^{k+1} 
    \coloneqq R_{v^k}(\tau_k \, \widehat{\grad}_m f(v^k) ) 
    = \frac{v^{k} + \tau_k\,  \widehat{\grad}_m f(v^k)}{\normB{v^{k} + \tau_k \, \widehat{\grad}_m f(v^k)}},
\end{equation}
where $\widehat{\grad}_m$ is \emph{$m$-sample approximation of the Riemannian gradient} 
\begin{equation}
    \label{eq: m-sample gradient}
    \widehat{\grad}_m f(v)
    \coloneq \frac{1}{m}\sum_{i =1}^m \frac{f(R_v(\mu P_v x_i)) - f(v)}{\mu} P_v x_i, \quad x_i \sim \cN(0, I_d)
\end{equation}
with scaling parameter $\mu > 0$. 
Note that sampling from the Gaussian distribution 
can be replaced by any rotation-invariant distribution. 
Further, the zeroth-order iteration \eqref{eq: zero order ascent}
can be seen as an instance of a larger class of inexact gradient methods, 
see, e.g., \cite{zhou2025inexact}. 
As stated in the next theorem,
the general convergence result from \cite{li2023stochastic} can be applied
to our special minimization problem \eqref{eq:gRq}.
We postpone the proof to Appendix~\ref{sec:proofs}.

\begin{theorem}
\label{th: zero-order convergence}
    Let $f$ be defined by \eqref{problem_riemann} and
    $L \ge 2 \norm{\herA}(1 + \kappa(B))$. 
    Then the sequence $(v^k)_{k=0}^{\infty}$ generated by \eqref{eq: zero order ascent} 
    with $\tau_k = 1/\big(2(d+4) L \big)$ and scaling parameters $\mu_k$ 
    satisfying $\sum_{k \in \NN} \mu_k^2 < \infty$ 
    fulfills $\grad f(v^k) \to 0$ a.s.\ as $k\to \infty$ 
    and there exists a constant $C > 0$ depending on $L$ and $d$ such that  
    \[
        \min_{k=0,\ldots,n} \EE [\norm{\grad f(v^k)}^2] 
        \le \frac{8(d+4) L}{n+1}[\mathcal R(A,B) - f(v^0) + C \sum_{k=0}^{\infty} \mu_k^2].
    \]
\end{theorem}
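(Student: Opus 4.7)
The plan is to reduce the statement to the general convergence theorem for stochastic zeroth-order Riemannian optimization on embedded submanifolds established in \cite{li2023stochastic}. That theorem requires three ingredients: (i) $f$ is bounded from above on the manifold; (ii) an $L$-retraction smoothness bound of the form \eqref{eq: retraction smoothness}; and (iii) standard bias/second-moment bounds for the Gaussian-smoothing estimator $\widehat{\grad}_m f(v)$ defined in \eqref{eq: m-sample gradient}. I would check these three assumptions for the specific setting of problem \eqref{eq:gRq} and then quote the theorem.

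First, $\sphere_B^{d-1}$ is a compact embedded submanifold of $\RR^d$ and $f(v) = \scp{v}{\herA v}$ is continuous, so $f$ is bounded from above (in fact by $\mathcal R(A,B)$). Second, the retraction smoothness estimate \eqref{eq: retraction smoothness} with constant $L \ge 2\norm{\herA}(1+\kappa(B))$ has already been verified for exactly this $f$ and $R_v$ inside the proof of Theorem~\ref{prop1}; no additional work is required. Third, because the samples $x_i \sim \cN(0,I_d)$ are drawn from a rotation-invariant distribution and because $f$ is smooth with the Hessian of its extension $\bar f$ bounded by $2\norm{\herA}$, the Gaussian-smoothing gradient estimator on the embedded submanifold satisfies the bias bound $\norm{\EE[\widehat{\grad}_m f(v)\mid v] - \grad f(v)} = \bigO(\mu_k L)$ and the second-moment bound $\EE[\norm{\widehat{\grad}_m f(v)}^2\mid v] \le c(d+4)\bigl(\norm{\grad f(v)}^2 + \mu_k^2 L^2\bigr)$; these are the estimates used to verify the assumptions of \cite[Thm.~1/Cor.~1]{li2023stochastic} (the precise form of those assumptions dictates the factor $(d+4)$ appearing in the step size).

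With these assumptions in hand, the theorem from \cite{li2023stochastic} applies directly. The prescribed step size $\tau_k = 1/(2(d+4)L)$ matches the one admissible there, and the summability $\sum_k \mu_k^2 < \infty$ forces the cumulative effect of the finite-difference bias to remain bounded almost surely. A standard Robbins--Siegmund / supermartingale argument, internal to the cited reference, then yields both the almost sure convergence $\grad f(v^k) \to 0$ and the existence of a (data-dependent) random constant $C$ with $\min_{k\le n}\norm{\grad f(v^k)}^2 \le C/(n+1)$ almost surely.

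The main obstacle I anticipate is purely bookkeeping: one must confirm that the retraction smoothness constant and the Gaussian-smoothing moment bounds on the $B$-weighted sphere $\sphere_B^{d-1}$ — as opposed to the standard Euclidean sphere treated in \cite{li2023stochastic} — come out with the same structural form, with the only change being the factor $(1+\kappa(B))$ in $L$. This is essentially verified already in the proof of Theorem~\ref{prop1} via the equivalence of norms \eqref{eq: norm bounds}, so no new technical tool is needed beyond carefully tracking constants.
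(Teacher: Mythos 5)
Your proposal is correct and matches the paper's intended argument: the paper gives no standalone proof of this theorem, presenting it as a direct application of the general convergence result of \cite{li2023stochastic}, with the required retraction-smoothness constant $L \ge 2\norm{\herA}(1+\kappa(B))$ already verified in the proof of Theorem~\ref{prop1} and boundedness of $f$ on the compact sphere $\sphere_B^{d-1}$ immediate. Your additional bookkeeping of the bias and second-moment bounds for the Gaussian-smoothing estimator is exactly the verification the citation presupposes, so the approach is essentially the same.
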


\section{New Stochastic Zeroth-Order Algorithm} \label{sec:gRq}
In this section, we propose a simpler 
and more powerful stochastic zeroth-order algorithms for maximizing the generalized Rayleigh quotient.
To make the convergence analysis better accessible, 
we start with a one-sample method given in Algorithm \ref{alg:gRq}, i.e., $m=1$. Then the algorithm is generalized
to more samples.

\begin{algorithm}[h!]
  \caption{One-sample zeroth-order method}\label{alg:gRq}
  \begin{algorithmic}[1]
    \State Initialize $v^{0} \coloneqq  \tilde v/\|\tilde v\|_B, \quad \tilde v \sim \mathcal N(0,I_d)$
    \For{$k=0,1,2,\dots$}
      \State Sample  $$x^k \coloneqq P_{v^k} \tilde x /\|P_{v^k} \tilde x\|, 
      \quad
      \tilde x \sim \mathcal N(0,I_d)$$ 
      \State If $\langle x^k, \herA v^k \rangle = 0$ then \textbf{stop.}
      \State Otherwise, calculate step size 
      $$
      \tau_{k} \coloneqq \argmax\limits_{\tau \in \RR} r(A,B, v^k + \tau x^k)
      $$
     \State Update  $$v^{k+1} \coloneqq \frac{v^k + \tau_k x^k}{\normB{v^k + \tau_k x^k}}$$ 
    \EndFor
  \end{algorithmic}
\end{algorithm}

Sampling from a Gaussian, 
we clearly have that $\tilde v = 0$ 
as well as $P_{v^k} \tilde x = 0$ occurs with probability zero. 
By the following theorem, 
the step sizes $\tau_k$ in each iteration $k$ in Algorithm~\ref{alg:gRq} 
can be computed analytically. 
To this end, we note that by construction
$v^k \in \sphere^{d-1}_B$ and $x^k \in T_{v^k} \cap \mathbb S^{d-1}$.
To simplify the notation, 
we drop the index and superscript $k$ for counting the iteration.

\begin{theorem}\label{prop:stepsize}
For $v \in \mathbb S^{d-1}_B$ and $x \in T_v \setminus \{0\}$, let 
\begin{align}
    a &\coloneqq \scp{v}{A v},    
    \quad b \coloneqq \scp{x}{A v} + \scp{v}{A x} = 2\scp{x}{\herA v},\\
     c &\coloneqq \scp{x}{A x},   
    \quad d \coloneqq \scp{x}{B x}. 
\end{align}
If $b \not = 0$, then
\begin{equation} \label{maxi}
\tau^* \coloneqq \argmax_{\tau \in \RR} r(A,B, v + \tau x) 
\end{equation}
is given by
\begin{equation}\label{eq:tau}
\tau^* = \sign(b)\left(\frac{c - a d}{\abs{b} d} + \sqrt{\tfrac{(c - a d)^2}{(b d)^2} + \tfrac{1}{d}}\right) \in \RR \backslash \{0\}.
\end{equation}
\end{theorem}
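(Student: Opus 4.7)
My plan is to reduce the problem to a univariate calculus exercise and then carefully identify which of the two critical points is the maximizer.

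First, I would exploit the geometry: since $v \in \sphere_B^{d-1}$ we have $\scp{v}{Bv}=1$, and since $x \in T_v$ we have $\scp{x}{Bv}=0$. Therefore the denominator of $r(A,B,v+\tau x)$ simplifies neatly to
\begin{equation*}
\scp{v+\tau x}{B(v+\tau x)} = 1 + \tau^2 d,
\end{equation*}
while the numerator expands to $a + b\tau + c\tau^2$. So the problem \eqref{maxi} becomes the one-dimensional maximization of the rational function
\begin{equation*}
g(\tau) \;=\; \frac{a + b\tau + c\tau^2}{1 + d\tau^2}.
\end{equation*}
Note that $d > 0$ because $B \in \pd$ and $x \neq 0$, so $g$ is defined on all of $\RR$ and tends to $c/d$ as $\tau \to \pm\infty$.

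Next, I would differentiate and set $g'(\tau) = 0$. A routine computation shows that $g'(\tau)$ has the same sign as the quadratic
\begin{equation*}
Q(\tau) \;=\; -bd\,\tau^2 + 2(c - ad)\,\tau + b,
\end{equation*}
so the critical points are the two roots of $Q(\tau) = 0$. Since $b \neq 0$, these roots are real and distinct (the discriminant equals $4(c-ad)^2 + 4b^2d > 0$), and by Vieta their product equals $-1/d < 0$, hence they have opposite signs. Solving the quadratic gives
\begin{equation*}
\tau \;=\; \frac{(c - ad) \pm \sqrt{(c-ad)^2 + b^2 d}}{bd}.
\end{equation*}

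The main obstacle is then to identify which of the two roots is the maximizer. My plan here is to use Vieta once more: at any critical point $\tau^*$, the relation $b = bd(\tau^*)^2 - 2(c-ad)\tau^*$ lets one simplify
\begin{equation*}
g(\tau^*) - a \;=\; \frac{\tau^*\bigl(b + (c-ad)\tau^*\bigr)}{1 + d(\tau^*)^2} \;=\; \tfrac{b\tau^*}{2}.
\end{equation*}
Hence the larger critical value corresponds to the root with $\sign(\tau^*) = \sign(b)$. Finally, I would check by inspection of the quadratic formula that, regardless of whether $b$ is positive or negative, the root with $\sign(\tau^*) = \sign(b)$ is always the one with the ``+'' sign; rewriting this root by pulling $\sign(b)$ outside and using $|b| d > 0$ yields precisely the closed form \eqref{eq:tau}. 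The fact that $\tau^* \ne 0$ follows because $\tau^*=0$ would force the square root to equal $-(c-ad)$, contradicting the strict inequality $(c-ad)^2 + b^2 d > (c-ad)^2$.
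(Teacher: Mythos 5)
Your proposal is correct and follows essentially the same route as the paper: reduce to the univariate rational function $g(\tau)=\tfrac{a+b\tau+c\tau^2}{1+d\tau^2}$ using $\scp{v}{Bv}=1$ and $\scp{x}{Bv}=0$, set $g'(\tau)=0$, and solve the resulting quadratic, whose roots are exactly the paper's $\tau_\pm$. The only difference is cosmetic: where the paper dismisses the root selection as a ``straightforward computation,'' you identify the maximizer via the identity $g(\tau^*)-a=\tfrac{b\tau^*}{2}$ together with the opposite signs of the roots, which is precisely the increment formula the paper later establishes in Theorem~\ref{th:update_gRq}.
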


We emphasize that the computation of $b$ requires just access to $A$, but not to $A^\tT$.

\begin{proof}
By definition \eqref{eq:ray} and since $x \perp Bv$, we obtain
\begin{equation}\label{eq:func_gRq_abcd}
   g(\tau) \coloneqq  r(A,B,x+\tau v) 
    =  
    \frac{\langle x+\tau v, A(x+\tau v) \rangle}{\langle x+\tau v, B (x+\tau v) \rangle}  
     = \frac{a + \tau b + \tau^2 c}{1+\tau^2 d}
\end{equation}
so that
\begin{equation}\label{eq: derivative r_k}
   g'(\tau)
    = \frac{b + 2 \tau (c - a d) - \tau^2 b d}{(1 + \tau^2 d)^2} = 0.
\end{equation}
Since $d >0$ and $b \not = 0$, this equation has the solutions 
\begin{equation}
\tau_\pm =
        \frac{c - a d}{b d}
        \pm \sqrt{\tfrac{(c - a d)^2}{(b d)^2} + \tfrac{1}{d}}.
 \end{equation}   
 Straightforward computation shows 
 that for $b>0$ the maximum is attained 
 at $\tau_+$ and for $b<0$ at $\tau_-$. 
 Combining both cases,
 we conclude that $\tau^*$ reads as in \eqref{eq:tau}.
\end{proof}

\begin{remark}[Sub-Rayleigh quotient problem]
    \label{rem:Sub-Rayleigh quotient problem}
    The computation of the optimal step size from Theorem~\ref{prop:stepsize} 
    is equivalent to the Rayleight quotient problem of 
    \begin{equation*}
        A_2 \coloneqq \left[\begin{smallmatrix}
            a & \nicefrac{b}{2} \\ \nicefrac{b}{2} & c
        \end{smallmatrix}\right]
        = \left[\begin{smallmatrix}
            \scp{v}{Av} & \scp{x}{Av} \\ \scp{x}{Av} & \scp{x}{Ax}
        \end{smallmatrix}\right]
        = \left[\begin{smallmatrix}
            v & x
        \end{smallmatrix}\right]^{\tT}
        A \left[\begin{smallmatrix}
            v & x
        \end{smallmatrix}\right]
   \end{equation*}
   and
   \begin{equation*}
        B_2 \coloneqq \diag(1, d)
        = \left[\begin{smallmatrix}
            \scp{v}{Bv} & \scp{x}{Bv} \\ \scp{x}{Bv} & \scp{x}{Bx}
        \end{smallmatrix}\right]
        = \left[\begin{smallmatrix}
            v & x
        \end{smallmatrix}\right]^{\tT}
        B \left[\begin{smallmatrix}
            v & x
        \end{smallmatrix}\right].
    \end{equation*}
    Hence, it holds
    \begin{equation*}
        \tau^* = \argmax_{\tau \in \RR} \frac{\scp{\left(\begin{smallmatrix}
            1 \\ \tau 
        \end{smallmatrix}\right)}{ A_2 \left(\begin{smallmatrix}
            1 \\ \tau 
        \end{smallmatrix}\right)}}{\scp{\left(\begin{smallmatrix}
            1 \\ \tau 
        \end{smallmatrix}\right)}{ B_2 \left(\begin{smallmatrix}
            1 \\ \tau 
        \end{smallmatrix}\right)}}
        = \argmax_{\tau \in \RR}
        \frac{\scp{v + \tau x}{A (v + \tau x)}}{\scp{v + \tau x}{B (v + \tau x)}},
    \end{equation*}
    If $B = I_d$, then $B_2 = I_2$ and the computation of the step size 
    is the Rayleigh-Ritz method \cite{Rayleigh2011,Ritz1909}.
\end{remark}

A multi-sample version of Algorithm~\ref{alg:gRq} is provided by the following Algorithm~\ref{alg:gRq m-sample}.  For a motivation, see Remark \ref{discussion}.

\begin{algorithm}[h]
  \caption{$m$-sample zeroth-order method}\label{alg:gRq m-sample}
  \begin{algorithmic}[1]
    \State Initialize Initialize $v^{0} \coloneqq  \tilde v/\|\tilde v\|_B, \quad \tilde v \sim \mathcal N(0,I_d)$
    \For{$k=0,1,2,\dots$} 
      \State Sample 
       $$x^{k,i} \coloneqq P_{v^{k}} \tilde x^i /\|P_{v^k} \tilde x^i\|, 
      \quad
      \tilde x^i \sim \mathcal N(0,I_d), \quad i=1,\ldots,m$$  
      \State Construct
      \begin{align*}
          \bar x^{k} &= \tfrac{1}{m} \sum_{i=1}^{m} b_{k,i} \, x^{k,i}, \quad b_{k,i} \coloneqq 2 \langle x^{k,i}, \herA v^k \rangle \\
          x^{k} &= \bar x^{k} / \norm{\bar x^{k}}
      \end{align*}
      \State If $\langle x^k, \herA v^k \rangle = 0$ then \textbf{stop.}
      \State Otherwise, calculate step size 
      $$
      \tau_{k} \coloneqq \argmax\limits_{\tau \in \RR} r(A,B, v^k + \tau x^k)
      $$
      \State Update  $$v^{k+1} \coloneqq \frac{v^k + \tau_k x^k}{\normB{v^k + \tau_k x^k}}$$ 
    \EndFor
  \end{algorithmic}
\end{algorithm}

In the following sections, we provide convergence results 
for Algorithm~\ref{alg:gRq}. 
We give only an informal justification that all these results 
can be extended to Algorithm~\ref{alg:gRq m-sample}:  
we show in the next section for Algorithm~\ref{alg:gRq} 
that for $\dim (G_{\max}) < d-1$, 
the case $b_k \coloneqq 2 \scp{x^k}{\herA v^k} = 0$ appears with probability zero 
with respect to the uniform measure on a sphere. 
Although the conditional distribution of $x^k$ given $v^k$ 
constructed in Algorithm~\ref{alg:gRq m-sample} is no longer uniform, 
it is still absolutely continuous, for which the results can be obtained similarly.

\section{Termination of the Algorithm} \label{sec:termination}
In this section, we deal with the termination behavior of Algorithm~\ref{alg:gRq}.
Our main result in Theorem~\ref{l: initialization main}  
states that for $\dim(G_{\max}) < d-1$, 
the values $b_k$, $k \in \mathbb N$ will ``in general'' not vanish, 
so that the algorithm does not terminate
until an adjusted stopping criterion is reached.

The following remark sets up our stochastic setting.

\begin{remark}[Distribution of $x^k$ and $v^k$]    \label{rem:dist_x_k}
We consider a probability space  $(\Omega, \mathcal A, \mathbb P)$
and random variables $V: \Omega \to \sphere^{d-1}_B$ with law
\smash{$\mathbb P_V = V_\sharp \mathbb P = \mathbb P \circ V^{-1}$}.
Further, given a fixed $v \in \mathbb R^d$, we  deal with random variables
$X:  \Omega \to T_v \cap \mathbb S^{d-1}\simeq  \mathbb S^{d-2}$ 
which are uniformly distributed \smash{$\mathbb P_X = \mathcal U(T_v \cap \mathbb S^{d-1})$}
by the following Lemma~\ref{l: direction main}.
Therefore, we deal with surface measures $\sigma_{\sphere^{d-2}}$.
In the realm of the algorithm, these random variables become conditional ones with laws \smash{$\mathbb P_{X^k|V^k = v^k} = \mathcal U(T_{v^k} \cap \mathbb S^{d-1})$}.
Then the updates $v^{k+1}$ are samples from a conditional random variable with law
\smash{$\mathbb P_{V^{k+1}|V^k = v^k}$} depending just on the previous step.
In other words, 
\smash{$\mathbb P_{V^{k+1}|V^k}$}
are Markov kernels, meaning that 
\smash{$\mathbb P_{V^{k+1}|V^k = v^k}$} is a measure
for any $v^k \in \sphere^{d-1}_B$ and \smash{$\mathbb P_{V^{k+1}|V^k = \cdot} (A)$} is a measurable function for any Borel set $A \subset \sphere^{d-1}_B$.
Then our algorithm produces samples from a Markov chain
$(V^k)_{k \in \NN}$.

Finally, note that we use $x \sim \mathcal N(0,I_d)$, if $x$ is sampled from the standard normal distribution and $X \sim \mathcal N(0,I_d)$
to say that the random variable $X$ is standard normally distributed.
\end{remark}

As announced in the remark, 
we will need the following lemma, 
the proof of which is given in the Appendix~\ref{sec:proofs}. Note that the proof
specifies the isomorphism $T_{v} \cap \sphere^{d-1} \simeq \sphere^{d-2}$.

\begin{lemma}\label{l: direction main}
For a fixed $v \in  \sphere_B^{d-1}$  and $\tilde X \sim \cN(0, I_d)$, the random variable
    $X \coloneq P_v \tilde X/\|P_v \tilde X\|$
    is uniformly distributed 
    on $T_{v} \cap \sphere^{d-1} \simeq \sphere^{d-2}$.
    Moreover, it holds
    \begin{align*}
        \EE_{x \sim X}[x x^\tT] 
       = \frac{1}{d-1} P_{v} = \frac{1}{d-1} \Bigl(I_d - \tfrac{Bv}{\norm{Bv}}\tfrac{(Bv)^\tT}{\norm{Bv}}\Bigr) .
    \end{align*}
\end{lemma}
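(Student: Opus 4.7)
The plan is to reduce the claim to the standard fact that $\tilde Y/\|\tilde Y\|$ is uniform on $\sphere^{d-2}$ when $\tilde Y \sim \cN(0, I_{d-1})$. First, I would fix an orthonormal basis $q_1, \ldots, q_{d-1}$ of $T_v$ (via Gram--Schmidt applied to any basis of $T_v$) and append $q_d \coloneqq Bv/\|Bv\|$, obtaining an orthogonal matrix $Q \coloneqq [q_1, \ldots, q_d] \in \RR^{d\times d}$. The linear map $U\colon \RR^{d-1} \to T_v$, $z \mapsto [q_1,\ldots,q_{d-1}]\,z$, is then an isometry that identifies $T_v \cap \sphere^{d-1}$ with $\sphere^{d-2}$, making precise the isomorphism in the statement.

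For the first claim, I would exploit rotation invariance of the Gaussian: $Y \coloneqq Q^\tT \tilde X \sim \cN(0, I_d)$, and in this frame $P_v$ simply zeroes the last coordinate, so $P_v \tilde X = U(Y_{1:d-1})$ with $Y_{1:d-1} \sim \cN(0, I_{d-1})$. Hence $X = U(Y_{1:d-1}/\|Y_{1:d-1}\|)$ inherits the uniform distribution on $T_v \cap \sphere^{d-1}$ via $U$ from the standard fact above.

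For the covariance formula, set $Z \coloneqq Y_{1:d-1}/\|Y_{1:d-1}\|$. Rotation invariance of the uniform measure on $\sphere^{d-2}$ forces $\EE[ZZ^\tT] = c\,I_{d-1}$, and taking the trace (using $\|Z\|=1$) gives $c = 1/(d-1)$. Transporting this back through $U$ yields
\[
\EE[XX^\tT] = \tfrac{1}{d-1}\, Q \diag(1,\ldots,1,0) Q^\tT = \tfrac{1}{d-1} P_v,
\]
since $Q\,\diag(1,\ldots,1,0)\,Q^\tT$ is precisely the orthogonal projector onto $T_v$.

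There is essentially no hard step: the whole argument rests on rotation invariance of the standard Gaussian together with a symmetry argument on $\sphere^{d-2}$. The only thing to be careful about is the bookkeeping of the orthonormal frame so that $U$ really is an isometry with respect to the Euclidean inner product, which is what the Euclidean normalization in the definition of $X$ requires.
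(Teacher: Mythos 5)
Your proposal is correct and follows essentially the same route as the paper: an orthonormal frame of $T_v$ completed by $Bv/\norm{Bv}$, rotation invariance of the Gaussian to reduce to a standard normal in $\RR^{d-1}$, uniformity of the normalized vector on $\sphere^{d-2}$, and transport of the covariance through the isometry using $P_v = U_v U_v^\tT$. The only cosmetic difference is that you derive $\EE[ZZ^\tT]=\tfrac{1}{d-1}I_{d-1}$ by a rotation-invariance-plus-trace argument where the paper cites the isotropy of the uniform spherical distribution.
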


Further, the following fact is required, see Appendix~\ref{sec:proofs} for the proof.

\begin{lemma}\label{l: direction main_iii}    
     Let $M$ be an affine subspace in $\RR^d$ of dimension $r$,
    and     $$\varphi : M \setminus\{0\} \to \sphere^{d-1}, x \mapsto x/\norm{x}.$$
    \begin{itemize}
        \item[{\textrm i)}]
        If $0 \in M$ and $r < d$, then $\varphi(M)$ is of measure zero  with respect to  the surface measure $\sigma_{\sphere^{d-1}}$.
      \item[{\textrm ii)}]  
    If $0 \not \in M$ and $r < d-1$, then
        $\varphi(M)$ is of measure zero 
        with respect to $\sigma_{\sphere^{d-1}}$.
        \end{itemize}
\end{lemma}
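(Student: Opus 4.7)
The plan is to reduce both parts to a single geometric fact:

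\begin{center}
$(\star)$ \emph{If $L\subset\RR^d$ is a linear subspace with $\dim L = k < d$, then $L\cap\sphere^{d-1}$ has $\sigma_{\sphere^{d-1}}$-measure zero.}
\end{center}

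Granted $(\star)$, part i) is immediate: since $0 \in M$, $M$ is a linear subspace, so $\varphi(M) = M\cap\sphere^{d-1}$, and we apply $(\star)$ with $L=M$ and $k = r < d$.

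For part ii), I would write $M = v_0 + V$ with $v_0 \in M$ and $V \subset \RR^d$ a linear subspace of dimension $r$. The assumption $0 \notin M$ forces $v_0 \notin V$ (otherwise $0 = v_0 - v_0 \in v_0 + V = M$), so $L \coloneqq \RR v_0 \oplus V$ is a linear subspace of dimension $r+1$. Every point of $M$ lies in $L$, hence $\varphi(M) \subseteq L\cap\sphere^{d-1}$. Since $r < d-1$ gives $\dim L = r+1 < d$, we are again in the setting of $(\star)$ and conclude $\varphi(M)$ has measure zero.

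It remains to justify $(\star)$. Either $k = 0$ and $L \cap \sphere^{d-1} = \emptyset$, or $k \geq 1$ and $L\cap\sphere^{d-1}$ is a $(k-1)$-dimensional great subsphere, i.e.\ a smooth embedded submanifold of $\sphere^{d-1}$ of strictly smaller dimension ($k-1 < d-1$). The main (and only nonroutine) obstacle is showing that any such lower-dimensional embedded submanifold is $\sigma_{\sphere^{d-1}}$-null. The standard route is to cover it with countably many smooth coordinate charts of $\sphere^{d-1}$ (using that, up to a rotation, one can parametrize $L\cap\sphere^{d-1}$ by the first $k-1$ spherical coordinates), and observe that in each chart the image of the submanifold is contained in a $(k-1)$-dimensional affine slice, which has Lebesgue measure zero in $\RR^{d-1}$; since $\sigma_{\sphere^{d-1}}$ is absolutely continuous with respect to this Lebesgue measure on each chart, the conclusion follows. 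This completes the proof.
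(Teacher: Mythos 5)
Your proof is correct, but for part ii) you take a genuinely different route from the paper. The paper treats the two cases with two different tools: for i) it cites the standard fact that an embedded submanifold of positive codimension is null in $\sphere^{d-1}$, and for ii) it cites the measure-zero image theorem for smooth maps (a smooth map from an $r$-dimensional manifold into $\sphere^{d-1}$ with $r<d-1$ has null image), applied to $\varphi$ restricted to $M=m+W$. You instead unify both cases: writing $M=v_0+V$ with $v_0\notin V$, you observe $\varphi(M)\subseteq L\cap\sphere^{d-1}$ for the linear subspace $L=\RR v_0\oplus V$ of dimension $r+1<d$, so everything reduces to the single fact $(\star)$ that a proper linear subspace meets the sphere in a $\sigma_{\sphere^{d-1}}$-null great subsphere, for which you sketch a chart-based argument. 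Your containment trick is more elementary (it never needs the smooth-image theorem and only ever deals with great subspheres, for which the null-set property is checked by hand in slice charts), at the cost of re-proving in sketch form what the paper simply cites from Lee; the paper's citation-based argument is shorter and its Prop.~6.5 tool is more flexible, though that extra generality is not needed here. Both arguments use the same dimension threshold $r<d-1$ in ii), and your handling of the degenerate points (empty intersection when $k=0$, subsets of null sets being null) is fine.
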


We have to distinguish the three cases $\dim(G_{\max})\in \{d-1, d\}$
and $\dim(G_{\max}) < d-1$. The first case is handled in the following remark.

\begin{remark}[Special Cases of Dimension]\label{rem:1} \hfill
\begin{itemize}
\item[{\textrm i)}]    
If $\dim(G_{\max}) = d$, then every vector of $\RR^d$ is a generalized eigenvector of $(\herA, B)$ belonging to the eigenvalue $\mathcal R(A,B)$ and thus is a maximizer
    of $r(A,B,\cdot)$. In particular, we have $v^0 \in G_{\max}$. 
    But then $\herA v^0 =  \mathcal R(A,B) \, B v^0 \in T_{v^0}^\perp$
    and since $x^0 \in T_{v^0}$, we get $\langle x^0, \herA v^0 \rangle = 0$. Thus, the algorithm terminates.
\item[{\textrm ii)}]
If $\dim(G_{\max}) < d$, then clearly $\dim(G_{\lambda}) < d$ for any  generalized eigen\-space $G_\lambda$ of $(\herA, B)$. Then,  $v^0 \in G_\lambda$ 
if and only if $\tilde v \in G_\lambda$ which is only possible on a zero set, since $\tilde v \sim \mathcal N(0,I_d)$. Thus, by additivity of the measure, $v^0 \not \in G$ a.s.
\end{itemize}
\end{remark}

The next lemma deals with the case $b=0$.

\begin{lemma}\label{l: eigenvectors and b}
    Let $v \in \mathbb S_B^{d-1}$
    and $x = P_v \tilde x/\| P_v \tilde x\|$, $\tilde x \sim \mathcal N(0,I_d)$.
    Then $v$  is a generalized eigenvector of $(\herA,B)$
    if and only if $b \coloneqq \langle x, \herA v \rangle = 0$ a.s.
\end{lemma}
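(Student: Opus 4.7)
The plan is to prove both implications directly, exploiting the geometric fact $T_v^\perp = \mathrm{span}(Bv)$ and Lemma~\ref{l: direction main} on the uniform distribution of $x$ on $T_v \cap \sphere^{d-1}$.

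For the forward direction, assume $v$ is a generalized eigenvector of $(\herA,B)$, i.e.\ $\herA v = \lambda Bv$ for some $\lambda \in \RR$. Then $\herA v \in T_v^\perp$, and since by construction $x \in T_v$, we immediately obtain $b = \langle x, \herA v \rangle = \lambda \langle x, Bv\rangle = 0$ deterministically, hence in particular almost surely.

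For the converse, assume $b = 0$ almost surely. Decompose $\herA v = P_v (\herA v) + (I_d - P_v)(\herA v)$ with the first summand in $T_v$ and the second in $T_v^\perp$. Since $x \in T_v$, we have
\begin{equation*}
b = \langle x, \herA v\rangle = \langle x, P_v \herA v\rangle \quad \text{a.s.}
\end{equation*}
By Lemma~\ref{l: direction main}, $X = P_v \tilde X / \|P_v \tilde X\|$ is uniformly distributed on $T_v \cap \sphere^{d-1}$, which is isomorphic to $\sphere^{d-2}$. Suppose for contradiction that $w \coloneqq P_v \herA v \neq 0$. Then the set
\begin{equation*}
S \coloneqq \{x \in T_v \cap \sphere^{d-1} : \langle x, w\rangle = 0\}
\end{equation*}
is the intersection of $T_v \cap \sphere^{d-1}$ with the linear hyperplane $w^\perp$ inside the $(d-1)$-dimensional space $T_v$; it is therefore a great $(d-3)$-dimensional subsphere of the $(d-2)$-dimensional sphere $T_v \cap \sphere^{d-1}$ (a pair of antipodal points if $d=2$, and empty if $d=1$, but one checks the latter case is degenerate). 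By Lemma~\ref{l: direction main_iii}\,i) applied within $T_v$, $S$ has measure zero with respect to the uniform measure on $T_v \cap \sphere^{d-1}$. Consequently $\PP(b = 0) = \PP(X \in S) = 0$, contradicting the assumption $\PP(b=0) = 1$. Hence $P_v \herA v = 0$, so $\herA v \in T_v^\perp = \mathrm{span}(Bv)$. This means $\herA v = \lambda Bv$ for some $\lambda \in \RR$, so $v$ is a generalized eigenvector of $(\herA,B)$.

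The only subtle step is the backward direction, where one must justify that ``$\langle x, w\rangle = 0$ almost surely'' for a uniformly distributed $x$ on a sphere forces $w = 0$; this is handled by the measure-zero observation above. The forward direction and the reduction to $P_v \herA v$ are routine.
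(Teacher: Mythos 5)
Your proof is correct and follows essentially the same route as the paper: the forward direction uses $\herA v \in T_v^\perp$ exactly as in the paper, and your contradiction argument in the converse is just the contrapositive form of the paper's observation that $v \notin G$ forces $\dim\bigl((\mathrm{span}\,\herA v)^\perp \cap T_v\bigr) < d-1$, hence a measure-zero set of directions $x$ with $\langle x, \herA v\rangle = 0$. (Minor cosmetic slip: for $d=2$ the set $S$ is empty rather than a pair of antipodal points — the antipodal pair occurs at $d=3$ — but this does not affect the measure-zero conclusion.)
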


\begin{proof}
    If $v \in G$, then we have by \eqref{helper} that $\herA v \in T_{v}^\perp$
    and since by construction $x \in T_{v}$, we get $b = 0$.
    \\
    On the other hand, if $v \not \in G$, then 
    $\dim ((\text{span} \, \herA v)^\perp \cap T_v) < d-1$, and therefore 
    \begin{equation*}
    \mathbb P_X \big(x \in T_v \cap \mathbb S^{d-1}: b = \langle x,\herA v \rangle = 0  \big) = 0.
    \tag*{\qedhere}
    \end{equation*}
\end{proof}

Further, we will need the following lemma.

\begin{lemma}\label{l: direction main_i}
Let $\lambda$ be a generalized eigenvalue of $(\herA,B)$. Then the affine subspace 
    $(v + T_{v}) \cap G_{\lambda}$
    is either empty or of dimension $\dim(G_\lambda)-1$.
\end{lemma}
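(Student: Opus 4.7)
The plan is to exploit the affine-hyperplane structure of $v+T_v$ together with the linear-subspace structure of $G_\lambda$. First, I would note that if $(v+T_v)\cap G_\lambda$ is nonempty and contains some $w_0$, then for any other element $w$ of this intersection, the difference $w-w_0$ lies in $T_v$ (because $v+T_v$ is affine with direction space $T_v$) and simultaneously in $G_\lambda$ (because $G_\lambda$ is a linear subspace). Hence
\[
(v+T_v)\cap G_\lambda = w_0 + (T_v\cap G_\lambda),
\]
so the affine dimension of the intersection equals $\dim(T_v\cap G_\lambda)$.

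The remaining step is to identify $\dim(T_v\cap G_\lambda)$, for which I would introduce the linear functional $\varphi\colon G_\lambda \to \RR$, $\varphi(w)\coloneqq \scp{w}{Bv}$. By the definition $T_v=\{x\in\RR^d:\scp{x}{Bv}=0\}$, one has $\ker\varphi = T_v\cap G_\lambda$, so rank--nullity reduces the claim to showing that $\varphi$ does not vanish identically on $G_\lambda$ whenever the intersection is nonempty. Taking a witness $w_0=v+x\in(v+T_v)\cap G_\lambda$ with $x\in T_v$ and using $v\in\sphere_B^{d-1}$, one computes
\[
\varphi(w_0) = \scp{v+x}{Bv} = \normB{v}^2 + \scp{x}{Bv} = 1 + 0 = 1 \neq 0.
\]
Hence $\dim(\ker\varphi) = \dim(G_\lambda)-1$, and the intersection inherits the claimed dimension.

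I do not expect any genuine obstacle: once the affine parametrisation via $w_0$ is in place, the proof collapses to a one-line rank--nullity argument. The only point worth verifying is that no case distinction between $v\in G_\lambda$ and $v\notin G_\lambda$ is required: the computation $\varphi(w_0)=1$ uses only $w_0\in v+T_v$ and $\normB{v}=1$, so the argument treats both situations uniformly.
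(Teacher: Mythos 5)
Your proof is correct and follows essentially the same route as the paper: both reduce the claim to the linear functional $u \mapsto \scp{u}{Bv}$ on $G_\lambda$, the paper describing the intersection as the level-$1$ set $\{u \in G_\lambda : \scp{u}{Bv} = 1\}$ and you describing it as the translate $w_0 + (T_v \cap G_\lambda)$ of the kernel of that functional, with the identical key computation $\scp{v+x}{Bv} = \normB{v}^2 = 1$ for $x \in T_v$. Your rank--nullity step simply makes explicit the dimension count that the paper leaves implicit.
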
		

\begin{proof}        
We show 
    \begin{equation}\label{eq: intersection param}
   (v + T_{v}) \cap G_{\lambda} = \{ u \in G_{\lambda} \mid \scp{u}{B v} = 1\} 
    \end{equation}
    which yields the assertion.     
    Assume that $u \in (v + T_{v}) \cap G_{\lambda}$. Then $u = v + x$ for some $x \in T_{v}$ and
    \begin{equation}\label{eq: intersection equiv}
    \scp{u}{B v} = \scp{v}{B v} + \scp{x}{B v} = \normB{v} = 1.
    \end{equation}
    Conversely, if $u \in G_{\lambda}$ and $\scp{u}{B v} = 1$, we get
    \[
    \scp{u - v}{B v} = \scp{u}{B v} - \scp{v}{B v} = 0.
    \]
    Therefore, $u - v \in T_{v}$ and $u = v + (u - v) \in v + T_{v}$. 
\end{proof}

Now we can treat the case $\dim(G_{\max}) = d-1$.

\begin{theorem}\label{l: initialization main}
    If $\dim(G_{\max}) = d-1$, 
    then $v^1$ generated by Algorithm~\ref{alg:gRq} is a.s.\ in $G_{\max}$. 
    In other words, Algorithm~\ref{alg:gRq} terminates a.s.\ after one step.
\end{theorem}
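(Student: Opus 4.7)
The plan is to show that almost surely the affine line $\{v^0 + \tau x^0 : \tau \in \RR\}$ meets the hyperplane $G_{\max}$ in a unique nonzero point, which by definition realizes the global maximum $\mathcal R(A,B)$ of $r(A,B,\cdot)$, and therefore coincides (after $B$-normalization) with $v^1$.

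First I would verify that the algorithm actually produces $v^1$. Since $\dim G_{\max} = d-1 < d$, Remark~\ref{rem:1}~ii) yields $v^0 \notin G$ a.s., and then Lemma~\ref{l: eigenvectors and b} gives $b_0 = 2\scp{x^0}{\herA v^0} \neq 0$ a.s. Hence the algorithm does not stop at $k=0$ and $\tau_0$ is well-defined by Theorem~\ref{prop:stepsize}.

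Next I would exploit the structure of $G_{\max}$. Because $B^{-1/2}\herA B^{-1/2}$ is symmetric, $(\herA,B)$ has $d$ generalized eigenvalues counted with multiplicity, so $\dim G_{\max} = d-1$ forces a unique second generalized eigenvalue $\lambda$ with $\dim G_\lambda = 1$; a standard orthogonality calculation (using symmetry of $\herA$ and $B$) shows that $G_{\max}$ and $G_\lambda$ are $B$-orthogonal. Fixing any $w \in G_\lambda\setminus\{0\}$, the identity $G_{\max} = \{u \in \RR^d : \scp{u}{Bw} = 0\}$ turns the requirement $v^0 + \tau x^0 \in G_{\max}$ into the scalar equation
\begin{equation*}
\tau \, \scp{x^0}{Bw} = -\scp{v^0}{Bw}.
\end{equation*}
Its right-hand side is nonzero a.s., since $v^0 \notin G_{\max}$ a.s. For the coefficient, $\scp{x^0}{Bw} = 0$ forces $x^0 \in (Bw)^\perp \cap T_{v^0}$, which is a proper $(d-2)$-dimensional subspace of the $(d-1)$-dimensional tangent space $T_{v^0}$ unless $Bw \in \RR \cdot Bv^0$, i.e.\ unless $v^0 \in G_\lambda$; the latter is excluded a.s.\ by Remark~\ref{rem:1}~ii). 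Conditionally on $v^0$, Lemma~\ref{l: direction main} makes $x^0$ uniform on $T_{v^0} \cap \sphere^{d-1} \simeq \sphere^{d-2}$, and Lemma~\ref{l: direction main_iii}~i) applied inside $T_{v^0}$ then yields $\scp{x^0}{Bw} \neq 0$ a.s. Hence there is a.s.\ a unique $\tau^* \in \RR$ with $v^0 + \tau^* x^0 \in G_{\max}$, and this vector is nonzero because $\scp{v^0}{Bx^0} = 0$ precludes collinearity of $v^0$ and $x^0$.

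Finally, since $v^0 + \tau^* x^0 \in G_{\max}\setminus\{0\}$, it attains the global maximum $\mathcal R(A,B)$ of $r(A,B,\cdot)$, so $\tau^*$ is a global maximizer of $\tau \mapsto r(A,B,v^0 + \tau x^0)$. As $b_0 \neq 0$, Theorem~\ref{prop:stepsize} asserts a unique maximizer along the line; hence $\tau_0 = \tau^*$ and $v^1 = (v^0 + \tau_0 x^0)/\normB{v^0 + \tau_0 x^0}$ lies in the subspace $G_{\max}$. The main delicate point I anticipate is the bookkeeping of null-measure exceptional events: we need to condition first on $v^0$ (to invoke $v^0 \notin G$) and then on $x^0$ (to invoke $\scp{x^0}{Bw} \neq 0$), ensuring the two almost-sure statements combine into a single almost-sure conclusion.
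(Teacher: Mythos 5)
Your proposal is correct, and it reaches the conclusion by a somewhat different route than the paper for the key geometric step. The paper works with the affine intersection $S=(v^0+T_{v^0})\cap G_{max}$ abstractly: it invokes Lemma~\ref{l: direction main_i} to see that $S$ is empty or of dimension $d-2$, rules out emptiness via the one-dimensionality of $(BG_{max})^\perp$, and then expands an arbitrary direction $x\in T_{v^0}\cap\sphere^{d-1}$ in an orthonormal basis of $T_{v^0}$ adapted to $S$ to show that, outside the measure-zero set $T_{v^0}\cap G_{max}$, some rescaling $\alpha x$ lands in $-v^0+S$. You instead exploit the codimension-one structure directly: since $\dim G_{max}=d-1$, the complementary generalized eigenspace $G_\lambda=\mathrm{span}(w)$ is one-dimensional and $B$-orthogonality of eigenspaces of the symmetric pencil gives $G_{max}=\{u:\scp{u}{Bw}=0\}$, so the intersection condition collapses to the scalar equation $\tau\scp{x^0}{Bw}=-\scp{v^0}{Bw}$, with the right-hand side nonzero a.s.\ (Remark~\ref{rem:1}~ii)) and the coefficient nonzero a.s.\ by the same uniformity/measure-zero tools (Lemmas~\ref{l: direction main} and~\ref{l: direction main_iii}) the paper uses. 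The endgame is identical in both arguments: the a.s.\ intersection point attains $\mathcal R(A,B)$, $b_0\neq 0$ a.s.\ makes the line maximizer $\tau_0$ unique by Theorem~\ref{prop:stepsize}, hence $\tau_0=\tau^*$ and $v^1\in G_{max}$. Your version is more explicit and avoids Lemma~\ref{l: direction main_i} and the basis construction, at the cost of being tailored to the codimension-one case (which is all this theorem needs), whereas the paper's intersection lemma is reused in the proof of Theorem~\ref{l: initialization main} for general eigenspace dimensions; as you note, the remaining bookkeeping is the law-of-total-probability integration over $v^0$, which the paper carries out explicitly and which poses no difficulty here.
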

\begin{proof}
    By Lemma~\ref{l: direction main_i}, 
    the intersection $S \coloneqq (v^0 + T_{v^0}) \cap G_{\max}$ is empty or $\dim(S) =\dim(G_{\max})-1 = d-2$.
    By \eqref{eq: intersection param}, we have that
    $$
    S = \emptyset \quad \Leftrightarrow \quad
    \scp{u}{Bv^0} = \scp{Bu}{v^0} = 0 \quad \text{for all } u \in G_{\max}.$$
     Since $B \in \pd$, it holds 
     $$W \coloneqq B G_{\max} = \dim(G_{\max}) = d-1 \quad \text{and} \quad
     \dim \left(W^\perp \right) =1.$$ 
    But by construction,  the probability that $v^0 \in W^\perp$ is zero,
    so that $S \neq \emptyset$ a.s. Consequently, we have
    $\dim(S) = d-2$ a.s. 
    
    We rewrite $S = u + T_{v^0} \cap G_{\max}$ with $u \in (T_{v^0} \cap G_{\max})^{\perp}$.
    Let $\{u_j: j=1,\ldots,d-1\}$ be an orthonormal basis of $T_{v^0}$ with $u_1 = u / \norm{u}$ 
    and $u_j \in T_{v^0} \cap G_{\max}$, $j=2,\ldots,d-1$. 
    For each direction $x \in T_{v^0} \cap \sphere^{d-1}$, we have an expansion 
    \[
    x = \scp{u_1}{x}u_1 + \sum_{j=2}^{d-1} \scp{x}{u_j} u_j 
    = \frac{\scp{u}{x}}{\norm{u}^2} u + \sum_{j=2}^{d-1} \scp{x}{u_j} u_j
    \]
    If $\scp{u}{x} \neq 0$, multiplying both sides with $\alpha \coloneqq \norm{u}^2 / \scp{u}{x}$ yields
    \[
    \alpha x
    = u + \alpha \sum_{j=2}^{d-1} \scp{x}{u_j} u_j
    \in u + T_{v^0} \cap G_{\max}.
    \]
    If $\scp{u}{x} = 0$, we get $x \in T_{v^0} \cap G_{\max}$. By Lemma~\ref{l: direction main}  we chose $x^0$ given $v^0$ uniformly at random on $T_{v^0} \cap \sphere^{d-1}$. 
    Since $\dim(T_{v^0} \cap G_{\max}) = d-2$ and $0 \in T_{v^0} \cap G_{\max}$,  we have  
    by Lemma~\ref{l: direction main_iii}  for all $v^0 \in \sphere_B^{d-1} \backslash W^\perp$ that 
    \[
    0 = \mathbb P_{X^0|V^0=v^0} (x^0 \in T_{v^0} \cap G_{\max}) .
    \]
    Then, by the law of total probability,
    \begin{align*}
    \mathbb P_{X^0}(x^0 \in T_{v^0} \cap G_{\max} ) 
     &= 
    \int_{v^{0} \in \sphere_B^{d-1}} \mathbb P_{X^0|V^0=v^0}(x^0 \in T_{v^0} \cap G_{\max}) ~ \dx \mathbb P_{V^0}(v^0) \\
    & = 
    \int_{v^{0} \in \sphere_B^{d-1} \backslash W^\perp}  \mathbb P_{X^0|V^0=v^0}(x^0 \in T_{v^0} \cap G_{\max}) ~ \dx  \mathbb P_{V^0}(v^0)  = 0. 
    \end{align*}
    Thus, $x^0 \notin T_{v^0} \cap \sphere^{d-1}$ a.s.\ and there exists $\alpha \neq 0$ such that 
    \[
    \alpha x^0 \in u + T_{v^0} \cap G_{\max} = T_{v^0} \cap (- v^0 + G_{\max} )
    \]
    and $v^0 + \alpha x^0 \in G_{\max}$ a.s. 
    By Remark~\ref{rem:1} ii), we know that $v^0$ is not a generalized eigenvector of $(\herA,B)$ a.s.\ and by
    Lemma \ref{l: eigenvectors and b},
    we have $b_0 \neq 0$ a.s. Thus, $\tau_0$ is a unique maximizer of $r(A,B,v^0 + \tau x^0)$, 
    we get $\alpha = \tau_0$.
\end{proof}
    
Finally, we deal with the case $\dim(G_{\max}) < d-1$.
 
\begin{theorem}\label{l: initialization main}
    Let $(v^k)_{k\in \NN}$ be the sequence generated by Algorithm~\ref{alg:gRq}.
    If $\dim(G_{\max}) < d-1$, 
    then $v^k$ is not a generalized eigenvector of $(\herA,B)$  
    for all $k \in \mathbb N$ a.s. 
    In other words, the algorithm does not terminate a.s.
\end{theorem}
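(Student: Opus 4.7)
My plan is to proceed by induction on $k$. The base case is Remark~\ref{rem:1} ii). For the inductive step I would fix a realization $v^k \notin G$ (which holds almost surely by the induction hypothesis) and show $\mathbb P(v^{k+1} \in G_\lambda \mid v^k) = 0$ for each generalized eigenvalue $\lambda$ of $(\herA, B)$; since there are only finitely many such $\lambda$ and countably many $k$, integrating against $\mathbb P_{V^k}$ and a union bound in $k$ then finishes the theorem. Note that because $v^{k+1}$ is a positive scalar multiple of $v^k + \tau_k x^k$ and $G_\lambda$ is a linear subspace, $v^{k+1} \in G_\lambda$ is equivalent to $\tau_k x^k \in M_\lambda \coloneqq (G_\lambda - v^k) \cap T_{v^k}$. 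If $M_\lambda = \emptyset$ there is nothing to prove; otherwise Lemma~\ref{l: direction main_i} gives $\dim(M_\lambda) = \dim(G_\lambda) - 1$, and the assumption $v^k \notin G_\lambda$ forces $0 \notin M_\lambda$, so $\operatorname{span}(M_\lambda) \subseteq T_{v^k}$ has dimension exactly $\dim(G_\lambda)$.

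When $\dim(G_\lambda) \leq d-2$, Lemma~\ref{l: eigenvectors and b} gives $b_k \neq 0$ (hence $\tau_k \neq 0$) almost surely, so $\tau_k x^k \in M_\lambda$ forces $x^k \in \operatorname{span}(M_\lambda) \cap \sphere^{d-1}$, which is a subsphere of the $(d-2)$-dimensional sphere $T_{v^k} \cap \sphere^{d-1}$ of dimension at most $d-3$. By Lemmas~\ref{l: direction main} and~\ref{l: direction main_iii}, this event has conditional probability zero.

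The main obstacle is the remaining case $\dim(G_\lambda) = d-1$, where $\operatorname{span}(M_\lambda) = T_{v^k}$ and the dimension-counting argument collapses. Here I would exploit the $\argmax$ structure of the algorithm rather than a measure-theoretic argument. The hypothesis $\dim(G_{\max}) < d-1$ rules out $\lambda = \mathcal R(A, B)$, and $B$-orthogonality of eigenspaces leaves only a one-dimensional complement for the remaining eigenvalue; that eigenvalue must therefore be $\mathcal R(A, B)$ with multiplicity one, and in particular $\mathcal R(A, B) > \lambda$. Expanding $v^k$ in a common $B$-orthonormal basis of eigenvectors of $(\herA, B)$, the assumption $v^k \notin G_\lambda$ directly yields $r(A, B, v^k) > \lambda$. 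Since $\tau_k = \argmax_{\tau} r(A, B, v^k + \tau x^k)$ and $\tau = 0$ is admissible, we get $r(A, B, v^{k+1}) \geq r(A, B, v^k) > \lambda$, so $v^{k+1} \notin G_\lambda$ deterministically. Combining the two cases, summing over the finitely many $\lambda$, and iterating the induction closes the argument.
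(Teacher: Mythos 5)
Your proposal is correct and follows essentially the same route as the paper's proof: the same induction anchored at Remark~\ref{rem:1}~ii), the same dimension count via Lemmas~\ref{l: direction main}, \ref{l: direction main_iii} and \ref{l: direction main_i} (together with $b_k\neq 0$ from Lemma~\ref{l: eigenvectors and b}) for eigenspaces of dimension at most $d-2$, and the same monotonicity-of-$r$ argument to exclude a possible $(d-1)$-dimensional eigenspace, which as you note forces exactly two eigenvalues with $\dim(G_{max})=1$. The only differences are organizational: you run a single induction on the event $v^k \notin G$ where the paper first handles each $G_\lambda$ with $\dim(G_\lambda)<d-1$ separately and then the exceptional two-eigenvalue case, and you apply part~i) of Lemma~\ref{l: direction main_iii} to $\operatorname{span}(M_\lambda)$ whereas the paper applies part~ii) to the normalized affine set $-v^k+S$.
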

\begin{proof}
    Let $\lambda$ be a generalized eigenvalue of $(\herA, B)$
    and consider an associated eigenspace $G_\lambda$. 
    We first show by induction 
    that $v^k \notin G_\lambda$ a.s.\ if $\dim (G_\lambda) < d-1$.
    
    By Remark~\ref{rem:1} ii),
    $v^0 \notin G_\lambda$ a.s. For the induction step, 
    assume that $v^k \notin G_\lambda$ a.s. 
    We show that $v^{k+1} \notin G_\lambda$ holds $\mathbb P_{X^{k} | V^k = v^k}$-a.s.
    
    By construction,  
    $v^{k+1} \in G_\lambda$ if  there exist some $\alpha \in \RR$ 
    and $x \in T_{v^k} \cap \sphere^{d-1}$ such that $v^k + \alpha x \in G_\lambda$. 
    We have that 
    \[
        \{v^k + \alpha x \in G_\lambda \mid \alpha \in \mathbb R, x \in T_{v^k} \cap\sphere^{d-1}\} 
        = (v^k + T_{v^k})\cap G_\lambda =:S.
    \]
    Hence, 
    all search directions $x$ that may yield $v^{k+1} \in G_\lambda$ 
    are given by 
    \[
    	D  \coloneqq \bigl\{ \tfrac{u}{\norm{u}} : u \in - v^k + S  \bigr\}.
    \] 
    Note that even if $x^k \in D$, 
    the choice of the step size $\tau_k$ may not give $v^k + \tau_k x^k \in S$.  
    By Lemma~\ref{l: direction main_i}, 
    we know that $S$ is either empty or $\dim(S) = \dim(G_\lambda)-1$. 
    
    In the first case, 
    we get immediately $v^{k+1} \notin G_\lambda$. 
    Otherwise, we have 
    $\dim(-v^k + S) = \dim(S) = \dim(G_\lambda)-1 < d-2$. 
    Clearly, it also holds $0 \notin - v^k + S$ if $v^k \notin G_\lambda$. Recall that by Lemma~\ref{l: direction main}, 
    we chose $x^k$ given $v^k$ uniformly on 
    $T_{v^k} \cap \sphere^{d-1} \simeq \sphere^{d-2}$. Thus, by Lemma~\ref{l: direction main_iii}, 
    \[
        \mathbb P_{V^{k+1} | V^k = v^k}( v^{k+1} \in G_\lambda) 
        \le  \mathbb P_{X^{k} | V^k = v^k}( x^k \in D ) = 0 
        \quad \text{if} \quad 
        v^k \notin G_\lambda.
    \]
    Since $v^k \notin G_\lambda$ a.s., 
    by the law of total probability we get
    \begin{align*}
        \mathbb P_{V^{k+1}}(v^{k+1} \in G_\lambda) 
        & = \int_{\sphere^{d-1}_B \backslash G_\lambda } \mathbb P_{V^{k+1} | V^k = v^k}( v^{k+1} \in G_\lambda ) \, \dx \mathbb P_{V^k} (v^k) = 0.
    \end{align*}
    This finishes the induction step.
    
    Now, 
    if  $\dim(G_{\lambda}) < d-1$ for all generalized eigenvalues $\lambda$, 
    then the assertion of the theorem follows by subadditivity of the measure.
    
    Otherwise, 
    there exists an eigenspace of dimension $d-1$. 
    This is only possible if there are exactly two unique eigenvalues 
    $\lambda_1, \lambda_2$ with 
    $\mathcal R(A,B) = \lambda_1 > \lambda_2$, $\dim(G_{\max})= 1$ 
    and $\dim(G_{\lambda_2}) = d - 1$. 
    By Remark~\ref{rem:1} ii), 
    we know that 
    $v^{0} \notin G$ a.s.\ and we have $r(A, B, v^0) > \lambda_{2}$ a.s. 
    By Lemma~\ref{l: eigenvectors and b}, 
    we get that $b_0 \neq 0$ a.s.\ and 
    \[
        r(A, B, v^1) 
        = \max_{\tau \in \RR} r(A,B, v^0 + \tau x^0) 
        \ge r(A, B, v^0) 
        > \lambda_{2} 
        \quad \text{a.s.}
    \]
    Therefore, 
    $v^1 \notin G_{\lambda_2}$ a.s. 
    Furthermore, 
    since $\dim(G_{\max}) = 1 < d-1$, 
    we have $v^1 \notin G_{\max}$. 
    Summarizing,  
    we get $v^1 \notin G$. 
    Continuing inductively, 
    we get that $v^{k} \notin G$ for all $k \in \NN$ a.s.
\end{proof}

\section{Convergence of the Algorithm} \label{sec:conv}

In this section, 
we investigate the convergence of $(r(A,B,v^k) )_{k\in \mathbb N}$ 
and $(v^k)_{k \in \mathbb N}$ in the case $\dim(G_{\max}) < d-1$. 
In accordance with Theorem \ref{prop:stepsize}, 
we use the notation
\begin{align}
    & a_k \coloneqq \scp{v^k}{A v^k},    
    \quad b_k \coloneqq 2\scp{x^k}{\herA v^k}, \quad
     c_k \coloneqq \scp{x^k}{A x^k},   
    \quad d_k \coloneqq \scp{x^k}{B x^k}. 
\end{align}
First, 
we quantify the change of the objective 
in terms of $\tau_k$ and $b_k$. 
In analogy to the sufficient decrease inequality \cite[Eq.~4.7]{boumal2023introduction} 
for the Riemannian gradient ascent \eqref{eq: Riemmanian gradient ascent}, 
here $|b_k|$ plays the role of the norm gradient $\norm{\grad f(v^k)}$.    

\begin{theorem}
    \label{th:update_gRq}
    Let $\dim(G_{\max}) < d-1$. 
    Then, we have $r(A,B,v^{k}) = a_k$ and
    \begin{align*}
        r(A,B,v^{k+1}) - r(A,B,v^{k})
        =  a_{k+1} - a_k
        = \tfrac{1}{2}\tau_k b_k > 0 \quad \text{a.s.}
    \end{align*}
    Furthermore, $(r(A,B,v^k))_{k\in \NN}$ converges a.s.\ and $\tau_k b_k \to 0$ 
    as $k \to \infty$ a.s.
\end{theorem}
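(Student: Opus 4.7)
The identity $r(A,B,v^k)=a_k$ is immediate from $v^k\in\sphere_B^{d-1}$, so the denominator of the Rayleigh quotient equals one. For the increment, the plan is to plug $\tau_k$ into the explicit rational representation \eqref{eq:func_gRq_abcd}, giving
\[
a_{k+1} = g(\tau_k) = \frac{a_k + \tau_k b_k + \tau_k^2 c_k}{1 + \tau_k^2 d_k},
\]
and to use the first-order optimality condition $g'(\tau_k)=0$, i.e.\ $b_k + 2\tau_k(c_k-a_k d_k) - \tau_k^2 b_k d_k = 0$, to eliminate $c_k - a_k d_k$. Substituting $\tau_k^2(c_k-a_k d_k) = \tfrac{1}{2}(\tau_k^3 b_k d_k - \tau_k b_k)$ into the numerator of $a_{k+1}-a_k$ and simplifying should collapse the fraction to $\tfrac{1}{2}\tau_k b_k$. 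This is the main computational step and is entirely routine once the critical-point relation is applied.

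The positivity $\tau_k b_k>0$ requires two inputs. First, by Theorem~\ref{l: initialization main}, under the assumption $\dim(G_{\max}) < d-1$, the iterate $v^k$ is a.s.\ not a generalized eigenvector of $(\herA,B)$, and hence Lemma~\ref{l: eigenvectors and b} yields $b_k\neq 0$ a.s. Second, Theorem~\ref{prop:stepsize} then applies, and one reads off from formula~\eqref{eq:tau} that $\tau_k = \sign(b_k)\cdot \xi_k$ where the bracketed factor
\[
\xi_k = \frac{c_k-a_k d_k}{|b_k|d_k} + \sqrt{\frac{(c_k-a_k d_k)^2}{(b_k d_k)^2} + \frac{1}{d_k}}
\]
is strictly positive (the square root strictly dominates the absolute value of the first summand, since $1/d_k>0$). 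Thus $\tau_k b_k = |b_k|\xi_k > 0$ a.s.

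For the final claim, the sequence $(r(A,B,v^k))_{k\in\NN} = (a_k)_{k\in\NN}$ is a.s.\ strictly increasing by the second statement, and bounded above by $\mathcal R(A,B)$ (the global maximum over $\sphere_B^{d-1}$). By monotone convergence it converges a.s., and telescoping gives $\sum_{k}\tfrac{1}{2}\tau_k b_k < \infty$ a.s., so in particular $\tau_k b_k \to 0$ a.s. The only subtlety in the whole argument is the almost-sure positivity of $b_k$: everything else is linear-algebraic manipulation. That is why the hypothesis $\dim(G_{\max})<d-1$ — which invokes Theorem~\ref{l: initialization main} — is essential precisely at this point and nowhere else.
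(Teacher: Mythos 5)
Your proposal is correct and follows essentially the same route as the paper: denominator equals one on $\sphere_B^{d-1}$, use the critical-point relation $g'(\tau_k)=0$ to collapse the increment to $\tfrac12\tau_k b_k$, obtain $b_k\neq 0$ a.s.\ from Theorem~\ref{l: initialization main} and Lemma~\ref{l: eigenvectors and b}, read off $\sign(\tau_k)=\sign(b_k)$ from \eqref{eq:tau}, and conclude by monotonicity and boundedness above by $\mathcal R(A,B)$. The only difference is cosmetic (you eliminate $c_k-a_kd_k$ in the numerator, the paper factors $\tau_k$ first), so nothing further is needed.
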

\begin{proof}
    For any $k \in \NN$, 
    we have $1 = \normB{v^k}^2 = \scp{v^k}{Bv^k}$. Hence, 
    \[
        r(A,B,v^{k})
        = \frac{\scp{v^k}{Av^k}}{\scp{v^k}{Bv^k}} 
        = \scp{v^k}{Av^k} = a_k,
    \]
    yielding the first equality.
    On the other hand, 
    by \eqref{eq:func_gRq_abcd}, 
    we obtain    
    \begin{align*}
        r(A,B,v^{k+1}) - r(A,B,v^{k}) 
        &= \frac{a_k + \tau_k b_k + \tau_k^2 c_k}{1 + \tau_k^2 d_k} - a_k \\
        &= \frac{a_k + \tau_k b_k + \tau_k^2 c_k - (1 + \tau_k^2 d_k) a_k}{1 + \tau_k^2 d_k} \\
        & = \frac{\tau_k}{1 + \tau_k^2 d_k}(b_k + \tau_k(c_k - a_k d_k)).
    \end{align*}
    Since $\dim(G_{\max})<d-1$, 
    by Theorem~\ref{l: initialization main} for all $k \ge 0$ 
    we have $v^k \notin G$,
    and by Lemma~\ref{l: eigenvectors and b} 
    we get $b_k \neq 0$ a.s. 
    Then, $\tau_k \neq 0$ holds by Theorem~\ref{prop:stepsize} 
    and
    \[
        \tau_k b_k  
        = \tfrac{c_k - a_k d_k}{d_k} + \sqrt{\tfrac{(c_k - a_k d_k)^2}{ d_k^2} + \tfrac{|b_k|^2}{d_k}} 
        > 0 
        \quad \text{a.s.}
    \]
    Also, 
    in the proof of Theorem~\ref{prop:stepsize} 
    we have shown that $\tau_k$ satisfies \eqref{eq: derivative r_k}. 
    Thus, 
    it holds $\tau_k(c_k - a_k d_k) = \tfrac{1}{2}((\tau_k^2 d_k - 1) b_k)$ giving 
    \begin{align*}
        r(A,B,v^{k+1}) - r(A,B,v^{k})
        &= \frac{\tau_k}{1 + \tau_k^2 d_k}(1 + \tau_k^2 d_k) \tfrac{b_k}{2}
        = \frac{\tau_k b_k}{2} > 0 \quad \text{a.s.}
    \end{align*}
    Finally, 
    we note that by the above inequality $(r(A,B,v^k))_{k\in \NN}$ 
    is a monotonically increasing sequence 
    that is bounded from above by $\mathcal R(A,B)$. 
    Hence, 
    it converges a.s.\ and its increments admit $\tau_k b_k \to 0$ 
    as $k \to \infty$ a.s.     
\end{proof}

Since we have by Lemma~\ref{l: eigenvectors and b} 
that $b_k = 0$ a.s.\ if and only if $v^k$ is the generalized eigenvector of $(\herA,B)$, 
we want to ensure that $b_k$ vanishes as $k \to \infty$. 
Then, we can use $b_k^2$ as an adjusted stopping criterium. 

\begin{theorem}
    \label{th:b_k_to_zero}
    Let $\dim(G_{\max}) < d-1$. 
    Then it holds  $b_k \to 0$ as $k \to \infty$ a.s.
\end{theorem}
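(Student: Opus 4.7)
The plan is to combine the explicit formula for $\tau_k b_k$ derived in the proof of Theorem~\ref{th:update_gRq} with the fact, also established there, that $\tau_k b_k \to 0$ a.s.\ as $k \to \infty$. Recall that this proof yielded
\[
\tau_k b_k = \sigma_k + \sqrt{\sigma_k^2 + b_k^2/d_k} > 0 \quad \text{a.s.,} \qquad \sigma_k \coloneqq (c_k - a_k d_k)/d_k,
\]
where strict positivity follows from $\dim(G_{\max}) < d-1$ together with Theorem~\ref{l: initialization main} and Lemma~\ref{l: eigenvectors and b}, which give $b_k \neq 0$ a.s.

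First I would establish uniform deterministic bounds on $a_k, b_k, c_k, d_k$, and hence on $\sigma_k$. Since $v^k \in \sphere_B^{d-1}$ and $B \in \pd$, the Euclidean norm $\norm{v^k}$ is uniformly bounded; combined with $\norm{x^k} = 1$ and the boundedness of $A$ and $B$, this yields constants $C_1, C_2 > 0$ depending only on $A$ and $B$ such that $|a_k|, |b_k|, |c_k| \le C_1$, $0 < \underline{d} \le d_k \le \norm{B}$ and $|\sigma_k| \le C_2$ for all $k$, where $\underline{d}$ denotes the smallest eigenvalue of $B$.

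Next, I would rationalize the square root by multiplying numerator and denominator of the expression above by $\sqrt{\sigma_k^2 + b_k^2/d_k} - \sigma_k$, which gives
\[
\tau_k b_k = \frac{b_k^2/d_k}{\sqrt{\sigma_k^2 + b_k^2/d_k} - \sigma_k}.
\]
The denominator is strictly positive and, by subadditivity of the square root together with the bounds above,
\[
\sqrt{\sigma_k^2 + b_k^2/d_k} - \sigma_k \le 2|\sigma_k| + |b_k|/\sqrt{d_k} \le 2 C_2 + C_1/\sqrt{\underline{d}} =: D.
\]
Combined with $d_k \le \norm{B}$, this produces a deterministic constant $C_3 > 0$, independent of $k$, such that $\tau_k b_k \ge C_3\, b_k^2$ for all $k$.

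Finally, since $\tau_k b_k \to 0$ a.s.\ by Theorem~\ref{th:update_gRq}, the uniform inequality $\tau_k b_k \ge C_3 b_k^2$ immediately forces $b_k^2 \to 0$ a.s.\ and hence $b_k \to 0$ a.s. The main technical point is establishing the uniform lower bound $\tau_k b_k \ge C_3 b_k^2$: without uniform control over $\sigma_k$ and $d_k$ one could not pass from the weaker $\tau_k b_k \to 0$ to $b_k \to 0$, but the compactness of $\sphere_B^{d-1}$ and of the Euclidean unit sphere, together with boundedness of $A$ and $B$, supplies these bounds automatically.
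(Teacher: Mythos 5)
Your proposal is correct and follows essentially the same route as the paper: both express $b_k^2$ as $\tau_k b_k$ times an a.s.\ uniformly bounded factor (your rationalized identity $\tau_k b_k\bigl(\sqrt{\sigma_k^2+b_k^2/d_k}-\sigma_k\bigr)=b_k^2/d_k$ is algebraically the same relation the paper obtains from the stationarity equation \eqref{eq: derivative r_k}), derive the uniform bounds on $a_k,c_k,d_k$ from $v^k\in\sphere_B^{d-1}$, $\norm{x^k}=1$ and $B\in\pd$, and then conclude from $\tau_k b_k\to 0$ a.s.\ (Theorem~\ref{th:update_gRq}) together with $b_k\neq 0$ a.s. No gap.
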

\begin{proof}
    Under the assumption, 
    $b_k \neq 0$ a.s.\ for all $k \in \NN$ by Theorem~\ref{l: initialization main}. 
    Then, the optimal step size $\tau_k$ from Theorem~\ref{prop:stepsize} satisfies \eqref{eq: derivative r_k}.
    Multiplying it with $b_k \neq 0$ and rearranging the terms yields 
    \begin{align}
        \label{eq:conv_b_k}
        0 < b_k^2 = \underbracket{\tau_k b_k}_{> 0} (\tau_k b_k d_k - 2 (c_k - a_k d_k)),
        \quad \forall k \in \NN
        \quad \textrm{a.s.}
    \end{align}
    Since $\tau_k b_k \to 0$ as $k\to \infty$ a.s.\ by Theorem~\ref{th:update_gRq}, 
    it remains to show that $\tau_k b_k d_k - 2 (c_k - a_k d_k)$ is bounded a.s.
    Starting with $d_k$, $\norm{x^k} = 1$ implies that
    \begin{equation*}
        d_k = \scp{x^k}{B x^k} \leq \norm{B}.
    \end{equation*}
    Therefore, using \eqref{eq: norm bounds}, we obtain
    \begin{align*}
        |a_k| & = \Bigl|\scp{v^k}{Av^k}\Bigr| 
        \le \norm{A} \norm{v^k}^2 
        \le \norm{A} \norm{B^{-1} } \normB{v^k}^2.
    \end{align*}
    Thus, Theorem~\ref{th:update_gRq} yields 
    \begin{equation}\label{eq: bound ak}
        0 < \tau_k b_k 
        = 2(a_{k+1} - a_k) 
        \le 2(|a_{k+1}| + |a_k|) 
        \le  4 \norm{A} \norm{B^{-1} } 
        \quad \text{a.s.}
    \end{equation}
    Analogously, we can show
    \[
        |c_k| = \Bigl|\scp{x^k}{A x^k}\Bigr| \le \norm{A} \norm{x}^2 = \norm{A},
    \]
    and
    \[
        \abs{c_k - a_k d_k}
        \le \norm{A}(1 + \norm{B} \norm{B^{-1} } ) 
        = \norm{A}(1 +   \kappa(B)).
    \] 
    Combining this, we arrive at 
    \begin{align}
        0 < \tau_k b_k d_k - 2 (c_k - a_k d_k)
        & \le 4 \norm{A} \norm{B} \norm{B^{-1} } + 2 \norm{A}(1 + \kappa(B)) \notag \\
        & = \norm{A}(2 + 6 \kappa(B)) \quad \text{a.s.} \label{eq: bound bsq tech}
    \end{align}
    for any $k \in \NN$. 
    Thus, by \eqref{eq:conv_b_k} 
    and Theorem~\ref{th:update_gRq},
    it follows $b_k^2 \to 0$ as $k \to \infty$ a.s.
\end{proof}

Before, we made an analogy between $|b_k|$ and $\norm{\grad f(v^k)}$. We formalize this connection in the next lemma.

\begin{lemma}    \label{l:Eb_k}
    The following relation holds true:
    \begin{align*}
        \EE_{x^k \sim X^k | V^k = v^k} [b_k^2]
        = \tfrac{4}{d-1} \norm{P_{v^k}\herA v^k}^2 
        = \tfrac{1}{d-1} \norm{\grad f(v^k)}^2.
    \end{align*}
\end{lemma}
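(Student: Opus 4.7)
The plan is to expand $b_k^2$ as a quadratic form in $x^k$, apply the conditional covariance formula from Lemma~\ref{l: direction main}, and then identify the resulting expression with $\|\grad f(v^k)\|^2$ using the explicit form of the Riemannian gradient in \eqref{eq: riemannian gradient special}.

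First I would write
\[
b_k^2 = 4\scp{x^k}{\herA v^k}^2 = 4\,(\herA v^k)^\tT\, x^k (x^k)^\tT\, (\herA v^k),
\]
so taking conditional expectation and pulling the (deterministic given $v^k$) vector $\herA v^k$ out of the expectation gives
\[
\EE_{x^k \sim X^k | V^k = v^k}[b_k^2]
= 4\,(\herA v^k)^\tT \,\EE_{x^k \sim X^k | V^k = v^k}[x^k (x^k)^\tT]\, (\herA v^k).
\]
By Lemma~\ref{l: direction main}, the inner expectation equals $\tfrac{1}{d-1} P_{v^k}$.

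Next, I would use that $P_{v^k}$ is the orthogonal projection onto $T_{v^k}$, in particular symmetric and idempotent, so
\[
(\herA v^k)^\tT P_{v^k} (\herA v^k)
= (\herA v^k)^\tT P_{v^k}^\tT P_{v^k} (\herA v^k)
= \norm{P_{v^k} \herA v^k}^2,
\]
which yields the first claimed equality $\EE[b_k^2] = \tfrac{4}{d-1}\norm{P_{v^k}\herA v^k}^2$.

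Finally, from \eqref{eq: riemannian gradient special} I read off $\grad f(v^k) = 2 P_{v^k} \herA v^k$ (the formula there is written with $Bv/\norm{Bv}$, but this is just $P_{v^k}$ applied to $\herA v^k$ written out), so $\norm{\grad f(v^k)}^2 = 4 \norm{P_{v^k}\herA v^k}^2$, and substituting gives the second equality $\tfrac{4}{d-1}\norm{P_{v^k}\herA v^k}^2 = \tfrac{1}{d-1}\norm{\grad f(v^k)}^2$. There is no real obstacle here; the whole statement is a direct consequence of the isotropy of $X^k$ on $T_{v^k} \cap \sphere^{d-1}$ already established in Lemma~\ref{l: direction main}, combined with the explicit form of $\grad f$.
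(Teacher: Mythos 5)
Your proposal is correct and follows essentially the same argument as the paper: expand $b_k^2$ as the quadratic form $4(\herA v^k)^\tT x^k (x^k)^\tT \herA v^k$, apply the conditional covariance $\tfrac{1}{d-1}P_{v^k}$ from Lemma~\ref{l: direction main}, use symmetry and idempotence of $P_{v^k}$, and identify $\grad f(v^k) = 2P_{v^k}\herA v^k$ from \eqref{eq: riemannian gradient special}. No gaps; nothing further needed.
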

\begin{proof}
    We rewrite the conditional expectation 
    of $b_k^2$ in terms of $x^k$ and $v^k$ as
    \begin{align*}
        \EE_{x^k \sim X^k | V^k = v^k}[b_k^2]
        &= 4 \EE_{x^k \sim  X^k | V^k = v^k}[\scp{x^k}{\herA v^k}^2] \\
        &= 4\EE_{x^k \sim  X^k | V^k = v^k}[(v^k)^\tT\herA x^k (x^k)^\tT \herA v^k] \\
        &= 4 (v^k)^\tT \herA \EE_{x^k \sim  X^k | V^k = v^k}[x^k (x^k)^\tT] \herA v^k \\
        &\hspace{-16pt}\overset{\textrm{Lemma~\ref{l: direction main}}}{=} \tfrac{4}{d-1}  (v^k)^\tT \herA P_{v^k} \herA v^k  \\
        &= \tfrac{1}{d-1} \norm{ 2 P_{v^k} \herA v^k}^2 
        = \tfrac{1}{d-1} \norm{\grad f(v^k)}^2,
    \end{align*}
    where we have used
    that the projection onto $T_{v^{k}}$
    is idempotent.
\end{proof}

Using Lemma~\ref{l:Eb_k} and Theorem~\ref{th:b_k_to_zero}, 
we show that $(r(A,B,v^k))_{k \in \NN}$ converges 
to a generalized eigenvalue of $(\herA,B)$ 
and a subsequence of $(v^k)_{k\in \NN}$ converges a.s.\ to the respective eigenspace. 
Recall that the distance of a point $x$ from a set  $S$ is defined by
\begin{equation}\label{def: distance to set}
    \dist(S, x) \coloneqq \inf_{y \in S} \; \norm{x - y}.
\end{equation}
For finite-dimensional subspaces $S$, 
the infimum is attained for some $y \in S$.

\begin{theorem}
    \label{thm:acc_AB}
    Let $\dim(G_{\max}) < d-1$.
    Then there exists a random variable $\lambda$,
    which is a generalized eigenvalue of $(\herA,B)$ a.s., 
    such that we have $\lim_{k \to \infty} r(A,B,v_k)  = \lambda$ a.s.
    Moreover, there exits a subsequence $(v^{k_j})_{j \in \NN}$ satisfying
    \[
        \dist(G_\lambda, v^{k_j}) \to 0 
        \quad \text{as} \quad 
        j \to \infty \quad \text{a.s.}
    \]
\end{theorem}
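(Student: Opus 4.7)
My plan is to first identify the candidate limit value $\lambda$ from the monotonicity already established, then transfer the a.s.\ vanishing of the one-dimensional slopes $b_k$ into an $L^1$ decay of the full gradient norm $\norm{\grad f(v^k)}$, and finally use compactness together with the continuity of $r(A,B,\cdot)$ and $\grad f$ to force the cluster points of $(v^k)$ into $G_\lambda$. Concretely, by Theorem~\ref{th:update_gRq} the sequence $(r(A,B,v^k))_{k\in \NN}$ is non-decreasing and bounded above by $\mathcal R(A,B)$, so it converges a.s.\ to some random variable $\lambda$, which remains to be identified as a generalized eigenvalue.

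The key technical step is to upgrade $b_k \to 0$ a.s.\ (Theorem~\ref{th:b_k_to_zero}) to the Riemannian gradient $\grad f(v^k)$. Using $\normB{v^k} = 1$, $\norm{x^k} = 1$, Cauchy--Schwarz, and \eqref{eq: norm bounds}, I obtain
\[
|b_k| \le 2\norm{\herA}\,\norm{v^k} \le 2\norm{\herA}\sqrt{\norm{B^{-1}}},
\]
so $b_k^2$ is uniformly bounded. Dominated convergence then yields $\EE[b_k^2] \to 0$, and the tower property combined with Lemma~\ref{l:Eb_k} gives $\EE[\norm{\grad f(v^k)}^2] = (d-1)\,\EE[b_k^2] \to 0$. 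Hence $\norm{\grad f(v^k)}^2 \to 0$ in $L^1$, and a standard argument produces a deterministic subsequence $(k_j)_{j\in \NN}$ along which $\norm{\grad f(v^{k_j})}^2 \to 0$ a.s.

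On the a.s.\ event where $\grad f(v^{k_j}) \to 0$ and $r(A,B,v^{k_j}) \to \lambda$, compactness of $\sphere_B^{d-1}$ supplies an accumulation point $v^*$. By continuity of $\grad f$ and $r(A,B,\cdot)$, I get $\grad f(v^*) = 2 P_{v^*}\herA v^* = 0$ and $r(A,B,v^*) = \lambda$, so $\herA v^* \in T_{v^*}^\perp$ and hence $\herA v^* = \mu B v^*$ with $\mu = r(A,B,v^*) = \lambda$; thus $v^* \in G_\lambda$ and $\lambda$ is a generalized eigenvalue of $(\herA,B)$. Since the same conclusion applies to every subsequential limit of $(v^{k_j})$ and $G_\lambda$ is closed, a routine compactness contradiction (any failure of $\dist(G_\lambda, v^{k_j}) \to 0$ would yield, after a further extraction, a limit point outside $G_\lambda$) finally gives $\dist(G_\lambda, v^{k_j}) \to 0$ a.s. The main obstacle is the second step, since $b_k \to 0$ only probes the gradient along the single random direction $x^k$; the identity $\EE[b_k^2\mid v^k] = \norm{\grad f(v^k)}^2/(d-1)$ from Lemma~\ref{l:Eb_k}, together with the uniform boundedness of $b_k^2$, is precisely what bridges this one-dimensional information back to vanishing of the full gradient norm.
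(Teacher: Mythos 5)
Your proof is correct and follows essentially the same route as the paper: dominated convergence of the uniformly bounded $b_k^2$ plus Lemma~\ref{l:Eb_k} to turn $b_k \to 0$ a.s.\ into $L^1$ decay of $\norm{\grad f(v^k)}^2$, extraction of a subsequence converging a.s., and compactness of $\sphere_B^{d-1}$. The only (minor) difference is the endgame: the paper constructs approximate eigenvalues $\lambda_j$ with $\norm{\herA v^{k_j} - \lambda_j B v^{k_j}} \to 0$ and deduces the distance convergence directly from the approximate eigenvector equation, whereas you identify cluster points as critical points via continuity of $\grad f$ and $r(A,B,\cdot)$ and conclude by a standard compactness contradiction; both arguments are valid.
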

\begin{proof}
    By Theorem~\ref{th:b_k_to_zero}
    we have $b_k^2 \to 0$ for $k \to \infty$ a.s.
    Since $\abs{b_k} \leq 2 \norm{\herA} \norm{B^{-1}}^{1/2}$,
    Lebesgue's dominated convergence theorem 
    implies that $\EE[b^k] = \EE_{(x^k,v^k) \sim (X^k,V^k)}[b_k^2] \to 0$ for $k \to \infty$.
    Therefore,
    Lemma~\ref{l:Eb_k} implies 
    \begin{align*}
        \tfrac{4}{d-1} \EE_{v^k \sim V^{k}}[\norm{P_{v^k} \herA v^k}^2]
        &= \EE_{v^k \sim V^{k}}[ \, \EE_{x^k \sim X^k | V^k = v^k}[b_k^2 ] \,]
        \\
        &= \EE_{(x^k,v^k) \sim (X^k,V^k)} [b_k^2]
        \to 0
    \end{align*}
    for $k \to \infty$.
    Hence, there exists a subsequence $(v^{k_j})_j$ such that 
    $\norm{P_{v^{k_j}}\herA v^{k_j}} \to 0$ a.s.
    This, in turn, shows
    that 
    \[
        \dist(T_{v^{k_j}}^{\perp}, \herA v^{k_j})
        \to 0 \quad \text{as} \quad  j \to \infty 
        \quad \text{a.s.} 
    \]
    Thus, there exist a random sequence $\lambda_j \in \RR$ such that
    \begin{align}
        \label{eq:eigen_equation}
        \norm{ \herA v^{k_j} - \lambda_j B v^{k_j}} \to 0 \quad \text{as} \quad  j \to \infty \quad \text{a.s.}
    \end{align}
     We observe that 
    \begin{align*}
        a_{k_j} 
        = \scp{v^{k_j}}{A v^{k_j}}
        = \scp{v^{k_j}}{\herA v^{k_j}}
        &= \underbracket{\scp{v^{k_j}}{\lambda_j B v^{k_j}}}_{=\lambda_j} 
        + \underbracket{\scp{v^{k_j}}{(\herA - \lambda_j B) v^{k_j}}}_{\to 0, \ j \to \infty, \ \text{a.s.}}
    \end{align*}
    and using the a.s.\ convergence of $(a_k)_{k=0}^{\infty}$ established in Theorem~\ref{th:update_gRq}, 
    we conclude 
    \begin{align*}
        \lim_{k \to \infty} a_k = \lim_{j \to \infty} a_{k_j} = \lim_{j \to \infty} \lambda_j =: \lambda.
    \end{align*}
    Since $(\rv{v}^{k_j})_{j \in \NN} \subset \sphere^{d-1}_B$, there exists a convergent subsequence $(v^{k_{j_\ell}})_{\ell \in \NN}$ with limit $v \in \sphere^{d-1}_B$. 
    Combining it with \eqref{eq:eigen_equation} yields 
    \begin{align*}
        0 = \lim_{\ell \to \infty} \norm{ \herA \rv{v}^{k_{j_\ell}} - \lambda_{j_\ell} B\rv{v}^{k_{j_\ell}}} 
        = \norm{\herA v - \lambda Bv} 
        \quad \text{a.s.,}
    \end{align*}
    showing that $\lambda$ is a generalized eigenvalue of $(\herA, B)$ a.s.
    Finally, 
    \begin{align*}
    \norm{ \herA v^{k_j} - \lambda B v^{k_j}} 
     & \le \norm{ \herA v^{k_j} - \lambda_j B v^{k_j}} + |\lambda - \lambda_j| \norm{B v^{k_j}} \\
    & \le \norm{ \herA v^{k_j} - \lambda_j B v^{k_j}} + |\lambda - \lambda_j| \norm{B}^{1/2} \to 0,
    \quad j \to \infty
    \quad \text{a.s.}
    \end{align*}
    gives the convergence of $\dist(G_{\lambda}, \rv{v}^{k_j})$ to $0$.  
\end{proof}

Next, we show that $\lambda$ in Theorem~\ref{thm:acc_AB} 
only takes the value $\mathcal R(A,B)$ a.s.
First, 
we derive a technical result that with probability at least $p>0$,
for all $v \in \sphere^{d-1}_B$,
performing an iteration of Algorithm~\ref{alg:gRq} 
yields a vector close to a maximizer. 
Crucial here is that $p$ uniform probability, 
i.e, is independent of the choice of $v$.  

\begin{lemma}    \label{l:uniform_BS}
    Let $\varepsilon > 0$ 
    and $v, v^* \in \sphere_B^{d-1}$.
    Consider
    \begin{align*}
        \mathcal D_v 
        \coloneqq 
        \bigl\{ x \in T_{v} \cap \sphere^{d-1}
        \mid
        \;\exists \tau \in \RR : 
        R_{v}(\tau x)\in \mathbb B_{\varepsilon,B}(\pm v^*) \cap \sphere_B^{d-1}
        \bigr\},
    \end{align*}
    where $\mathbb B_{\varepsilon,B}(\pm v^*) \coloneqq \mathbb B_{\varepsilon,B}(v^*) \cup \mathbb B_{\varepsilon,B}(-v^*)$
    is the union of the $\varepsilon$-balls with respect to $\normB{\cdot}$
    around $v^*$ and $-v^*$. 
    Then, there exists $p = p(\varepsilon, d, B) > 0$, such that for all $v \in \sphere_B^{d-1}$ the surface measure of the unit sphere $\sigma_{Bv}$ in $T_{v}$ satisfies 
    \begin{align*}
        \inf_{v \in \sphere_B^{d-1}} 
        \; \sigma_{Bv}(\mathcal D_{v}) \geq p.
    \end{align*}
\end{lemma}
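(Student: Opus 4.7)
The plan is to find a distinguished direction $x_0 \in T_v \cap \sphere^{d-1}$ around which $\mathcal D_v$ contains a spherical cap whose uniform surface measure admits a positive lower bound depending only on $\varepsilon, d, B$.

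Without loss of generality, $\gamma \coloneqq \langle v^*, Bv\rangle \ge 0$, since replacing $v^*$ by $-v^*$ preserves $\mathcal D_v$. Let $\eta \coloneqq 1 - \varepsilon^2/2$. If $\gamma \ge \eta$ (which covers the trivial case $\varepsilon \ge \sqrt{2}$), then $\|v - v^*\|_B^2 = 2 - 2\gamma \le \varepsilon^2$, so $v = R_v(0) \in \mathbb B_{\varepsilon,B}(v^*)$ and $\mathcal D_v = T_v \cap \sphere^{d-1}$. Otherwise $\gamma \in [0, \eta) \subset [0, 1)$, and I decompose $v^* = \gamma v + \sqrt{1-\gamma^2}\, u$ with $u \coloneqq (v^* - \gamma v)/\sqrt{1-\gamma^2} \in T_v$, $\|u\|_B = 1$, and set $x_0 \coloneqq u/\|u\| \in T_v \cap \sphere^{d-1}$.

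Fix $x \in T_v \cap \sphere^{d-1}$ and let $\hat x \coloneqq x/\|x\|_B$, so $\{v, \hat x\}$ is $B$-orthonormal in $\operatorname{span}(v, x)$. Since $\|v + \tau x\|_B^2 = 1 + \tau^2\|x\|_B^2$, the curve $R_v(\tau x)$ takes the form $\cos\phi\cdot v + \sin\phi\cdot \hat x$ with $\phi = \arctan(\tau \|x\|_B) \in (-\pi/2, \pi/2)$, so
\begin{equation*}
\langle R_v(\tau x), B v^*\rangle = \gamma\cos\phi + \sqrt{1-\gamma^2}\,\cos\theta_B(x, x_0)\,\sin\phi,
\end{equation*}
where $\theta_B$ denotes the $B$-angle and I used $\|u\|_B = 1$ to identify $\langle \hat x, Bu\rangle = \cos\theta_B(x, x_0)$. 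Cauchy--Schwarz applied to $(\cos\phi, \sin\phi)$ and $(\gamma, \sqrt{1-\gamma^2}\cos\theta_B(x, x_0))$ yields $\sup_{\tau \in \RR}|\langle R_v(\tau x), Bv^*\rangle| = \sqrt{\gamma^2 + (1-\gamma^2)\cos^2\theta_B(x, x_0)}$. Since $\min\{\|a-b\|_B^2, \|a+b\|_B^2\} = 2 - 2|\langle a, Bb\rangle|$ for $a, b \in \sphere_B^{d-1}$, the condition $x \in \mathcal D_v$ is equivalent to this supremum exceeding $\eta$; using $(\eta^2 - \gamma^2)/(1 - \gamma^2) \le \eta^2$ for $\gamma \in [0, \eta]$, I obtain
\begin{equation*}
\mathcal D_v \supseteq \bigl\{x \in T_v \cap \sphere^{d-1}: \sin^2 \theta_B(x, x_0) < 1 - \eta^2\bigr\}.
\end{equation*}

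It remains to translate this $B$-angular condition into a Euclidean spherical-cap bound. Writing $x = \cos\alpha\, x_0 + \sin\alpha\, e$ with $e \in T_v$, $\langle e, x_0\rangle = 0$, $\|e\| = 1$, a direct expansion gives $\sin^2\theta_B(x, x_0) = \sin^2\alpha \cdot \det(M^\tT B M)/(\langle x_0, Bx_0\rangle\,\|x\|_B^2)$ with $M \coloneqq [x_0\;\;e]$. Because $M$ has Euclidean orthonormal columns, the eigenvalues of the $2\times 2$ matrix $M^\tT B M$ lie in $[\lambda_{\min}(B), \lambda_{\max}(B)]$, yielding $\det(M^\tT B M) \le \lambda_{\max}(B)^2$ and $\langle x_0, Bx_0\rangle\,\|x\|_B^2 \ge \lambda_{\min}(B)^2$, so $\sin^2\theta_B(x, x_0) \le \kappa(B)^2 \sin^2\alpha(x, x_0)$. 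Consequently $\mathcal D_v$ contains $\{x \in T_v \cap \sphere^{d-1}: \sin^2\alpha(x, x_0) < (1-\eta^2)/\kappa(B)^2\}$, which is either the whole sphere or a union of two antipodal open spherical caps around $\pm x_0$ in $T_v \cap \sphere^{d-1} \simeq \sphere^{d-2}$. By rotational invariance of the uniform measure on $\sphere^{d-2}$, this measure depends only on $d, \varepsilon, \kappa(B)$ and is strictly positive, giving the required $p = p(\varepsilon, d, B) > 0$. The principal obstacles are the clean Cauchy--Schwarz reduction to the single angular quantity $\cos^2\theta_B(x, x_0)$ and the sharp Euclidean comparison $\sin^2\theta_B \le \kappa(B)^2 \sin^2\alpha$; the remainder is a classical spherical-cap volume computation.
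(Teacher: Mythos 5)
Your proof is correct, but it follows a genuinely different route than the paper. The paper's argument is constructive in the tangent space: it builds an auxiliary target ball $\mathbb B_{r,B}(\tilde u)$ around a normalized convex combination $\tilde u$ of $v$ and $v^*$, shows via triangle-inequality and normalization estimates that the retraction maps a Euclidean ball $\mathbb B_R(z)\cap T_v$ (centered at $z=\tfrac{1}{\scpB{\tilde u}{v}}\tilde u - v\in T_v$) into that target, and then bounds the measure of the resulting hyperspherical cap explicitly through the incomplete Beta function. You instead exploit the exact geodesic structure of the retraction: writing $R_v(\tau x)=\cos\phi\, v+\sin\phi\,\hat x$ in a $B$-orthonormal frame, you compute $\sup_\tau\abs{\scp{R_v(\tau x)}{Bv^*}}=\sqrt{\gamma^2+(1-\gamma^2)\cos^2\theta_B(x,x_0)}$ in closed form, reduce membership in $\mathcal D_v$ to a $B$-angular condition around the distinguished direction $x_0$, and then pass to a Euclidean cap via the identity $\sin^2\theta_B(x,x_0)=\sin^2\alpha\cdot\det(M^\tT BM)/(\norm{x_0}_B^2\norm{x}_B^2)$ together with eigenvalue interlacing, which gives the sharp comparison $\sin^2\theta_B\le\kappa(B)^2\sin^2\alpha$ (with $\kappa(B)=\lambda_{\max}(B)/\lambda_{\min}(B)$, the intended meaning despite the paper's typo $\norm{B}/\norm{B^{-1}}$). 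Your computations check out: the decomposition $v^*=\gamma v+\sqrt{1-\gamma^2}\,u$ with $u\in T_v$, $\normB{u}=1$ is valid, the determinant identity is verified by direct expansion, and the case split $\gamma\ge\eta$ versus $\gamma<\eta$ covers $\varepsilon\ge\sqrt 2$. What your route buys is an exact, coordinate-free description of the reachable set along the retraction curve and a cleaner cap radius $\arcsin\bigl(\sqrt{1-\eta^2}/\kappa(B)\bigr)$; what the paper's route buys is a fully explicit numerical value $p_{\varepsilon,d,B}$ via the Beta integral, which you only assert exists (positivity of a cap of fixed angular radius), though that is all the lemma requires.

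Two cosmetic points, neither a gap: your claim that $x\in\mathcal D_v$ is ``equivalent'' to the supremum exceeding $\eta$ is slightly imprecise (when the supremum equals $\eta$ and is attained, membership still holds for closed balls), but you only use the implication ``supremum $>\eta\Rightarrow x\in\mathcal D_v$,'' which is the direction needed for the inclusion; and the boundary case $\gamma=\eta$ with open balls is glossed over exactly as in the paper's own first case $\varepsilon\ge\normB{v-v^*}$, so it does not distinguish your argument from theirs. Like the paper, you also correctly read the statement's $R_v(x)$ as $R_v(\tau x)$.
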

\begin{proof}
    The proof is analogous to \cite[Lemma~2.16]{bresch2024matrixfreestochasticcalculationoperator}
    and can be found in Appendix~\ref{sec:proofs}. 
\end{proof}

Now, we are able to prove the strictly monotone convergence 
of our sequence $(r(A,B, v^k))_{k\in \mathbb N}$
towards $\mathcal R(A, B)$
almost surely.

\begin{theorem}
    \label{thm:almost_sure_convergence}
    Let $\dim(G_{\max}) < d-1$.
    Then, it holds
    \begin{align*}
        r(A,B, v^k) \to \mathcal R(A,B),
        \quad \text{and} \quad \dist(G_{\max}, v^k)\to 0, 
        \quad \text{as} \quad 
        k \to \infty 
        \quad \text{a.s.}
    \end{align*}
\end{theorem}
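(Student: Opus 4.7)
The plan is to upgrade Theorem~\ref{thm:acc_AB}, which already gives $r(A,B,v^k) \to \lambda$ a.s.\ for a random $\lambda$ that is a.s.\ a generalized eigenvalue of $(\herA,B)$, to the assertion $\lambda = \mathcal R(A,B)$ a.s. The distance statement is then harvested from value convergence via compactness of $\sphere_B^{d-1}$ and continuity of $r(A,B,\cdot)$.

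To identify $\lambda$ with $\mathcal R(A,B)$ a.s., I will fix any $v^* \in G_{max} \cap \sphere_B^{d-1}$ and $\varepsilon > 0$, and let $\cF_k$ denote the natural filtration generated by $(v^0,x^0,\ldots,v^k)$. Lemma~\ref{l:uniform_BS} provides a uniform lower bound $p = p(\varepsilon, d, B) > 0$ on the surface measure of $\cD_{v}$ in $T_v \cap \sphere^{d-1}$, independent of $v$. Because the conditional distribution of $x^k$ given $\cF_k$ is uniform on $T_{v^k}\cap \sphere^{d-1}$ by Lemma~\ref{l: direction main}, this translates into
\[
\PP\bigl(x^k \in \cD_{v^k} \,\big|\, \cF_k\bigr) \ge p \quad \text{a.s.}
\]
for every $k \in \NN$. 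The conditional Borel--Cantelli lemma then forces the event $\{x^k \in \cD_{v^k}\}$ to occur infinitely often a.s.

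Whenever it does, there exists $\tau \in \RR$ with $w^k \coloneqq R_{v^k}(\tau x^k) \in \mathbb B_{\varepsilon,B}(\pm v^*)\cap \sphere_B^{d-1}$, and since $\tau_k$ is the arg-max of $r(A,B,v^k+\cdot\, x^k)$,
\[
r(A,B,v^{k+1}) \ge r(A,B,w^k).
\]
Continuity of $r(A,B,\cdot)$ on the compact set $\sphere_B^{d-1}$, together with $r(A,B,\pm v^*) = \mathcal R(A,B)$, bounds the right-hand side below by $\mathcal R(A,B) - \eta(\varepsilon)$ with $\eta(\varepsilon)\downarrow 0$ as $\varepsilon\downarrow 0$. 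Combining this with the strict monotonicity of $(r(A,B,v^k))_k$ from Theorem~\ref{th:update_gRq} and letting $\varepsilon \to 0$ along a countable sequence gives $\lim_k r(A,B,v^k) \ge \mathcal R(A,B)$ a.s., and since the reverse inequality is automatic, equality holds a.s.

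For the second statement I argue by contradiction using compactness. If on some event of positive probability $\dist(G_{max},v^k)\not\to 0$, then a subsequence stays bounded away from $G_{max}$ and, by compactness of $\sphere_B^{d-1}$, admits a further limit point $v \in \sphere_B^{d-1}\setminus G_{max}$. Continuity of $r(A,B,\cdot)$ forces $r(A,B,v) = \mathcal R(A,B)$; but the set of maximizers of $r(A,B,\cdot)$ on $\sphere_B^{d-1}$ is exactly $G_{max}\cap \sphere_B^{d-1}$ (by the Lagrange condition $\herA v = \mathcal R(A,B) B v$), contradicting $v \notin G_{max}$. The main obstacle is the infinitely-often step: the events $\{x^k \in \cD_{v^k}\}$ are neither independent nor do they have constant conditional probability, so ruling out $\lambda < \mathcal R(A,B)$ hinges critically on the uniform lower bound supplied by Lemma~\ref{l:uniform_BS} plus a conditional (rather than classical) Borel--Cantelli argument.
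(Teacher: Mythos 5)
Your proposal is correct, and it shares the paper's central mechanism — the uniform lower bound $p=p(\varepsilon,d,B)>0$ from Lemma~\ref{l:uniform_BS} together with the monotonicity of $(r(A,B,v^k))_k$ from Theorem~\ref{th:update_gRq} and the optimality of $\tau_k$ — but the execution differs in two respects. For identifying the limit, the paper works with the superlevel set $\mathcal L=\{v\in\sphere_B^{d-1}: r(A,B,v)>\lambda_2(B^{-1}\herA)\}$, shows the events $E_k=\{V^k\notin\mathcal L\}$ are nested with $\PP(E_k)\le(1-p)^k$, applies the \emph{first} Borel--Cantelli lemma, and then must invoke Theorem~\ref{thm:acc_AB} (the limit is some generalized eigenvalue) to conclude that the only eigenvalue exceeding $\lambda_2$ is $\mathcal R(A,B)$; you instead apply the conditional (L\'evy) \emph{second} Borel--Cantelli lemma to the events $\{x^k\in\mathcal D_{v^k}\}$, whose conditional probabilities are bounded below by $p$ uniformly — this is legitimate since the fresh sample at step $k$ is independent of the past, so conditioning on the full filtration rather than on $V^k$ alone is harmless — and then squeeze $\lim_k r(A,B,v^k)\ge\mathcal R(A,B)-\eta(\varepsilon)$ for a countable sequence $\varepsilon\downarrow 0$. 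Your route buys independence from the eigengap quantity $\lambda_2$ and in fact makes Theorem~\ref{thm:acc_AB} unnecessary for the value statement, at the price of a heavier probabilistic tool, whereas the paper's geometric-decay bound is more elementary and even quantitative about how fast the iterates enter $\mathcal L$. For the distance statement, the paper expands $r(A,B,v^k)$ in the eigenbasis of $B^{-1/2}\herA B^{-1/2}$ and argues that the coefficients along non-maximal eigenvectors must vanish, while you argue by compactness of $\sphere_B^{d-1}$, continuity of $r(A,B,\cdot)$, and the fact that the maximizers on $\sphere_B^{d-1}$ are exactly $G_{max}\cap\sphere_B^{d-1}$; the two arguments are essentially equivalent (both reduce to the deterministic implication that value convergence forces distance convergence), with yours being slightly more topological and the paper's more explicit.
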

\begin{proof}
    Consider
    $$
        \mathcal L 
        \coloneqq 
        \bigl\{v \in \sphere_B^{d-1} : \scp{v}{\herA v} = r(A,B,v) > \lambda_2(B^{-1}\herA)\bigr\},
    $$
    and events $E_k \coloneqq \{ V^k \not \in \mathcal L\}.$

    By monotonicity of $(r(A,B,V^{k}))_{k\in \mathbb N}$ established in Theorem~\ref{th:update_gRq} we get $E_{k+1} \subseteq E_{k}$ for any $k \in \NN$. Hence,
    \[
    \mathbb P_{V^{k+1} | V^{k} = v^k}(v^{k+1} \not \in \mathcal L) = 0 
    \quad \text{for all} \quad v^k \in \mathcal L,
    \]
    and
    \begin{align*}
    \PP(E_{k+1}) 
    & = \mathbb P_{V^{k+1}}(v^{k+1} \not \in \mathcal L)
    = \int_{\sphere_B^{d-1} \backslash \mathcal L} \mathbb P_{V^{k+1} | V^{k}=v^k}( v^{k+1} \not \in \mathcal L ) \dx \mathbb P_{V^{k}} (v^k).
    \end{align*}
    
    Let $v^* \in G_{\max} \cap \sphere^{d-1}_B$. By continuity of $r(A,B,v)$, there exists $\varepsilon >0$ such that $\mathbb B_{\varepsilon,B}(\pm v^*) \subseteq \mathcal L$. By Lemma~\ref{l:uniform_BS}, we bound
    \begin{align*}
        \mathbb P_{V^{k+1} | V^{k} = v^k}( v^{k+1} \not \in \mathcal L )
        & = \mathbb P_{X^{k} | V^{k} = v^k}( R_{v^k}(\tau_k x^k) \not \in \mathcal L) \\
        & \le \mathbb P_{X^{k} | V^{k} = v^k}( x^k \not \in \mathcal D_{v} )
        \le 1 - p.
    \end{align*}
    Consequently, we get
    \[
        \PP(E_{k+1}) 
        \le (1 - p) \mathbb P_{V^k} ( v^{k} \not \in \mathcal L ) 
        = (1 - p) \PP( E_k ),
    \]
    and, 
    inductively, $\PP(E_{k}) \le (1- p)^k \,\PP( E_0 )$.
    Hence, $\sum_{k = 0}^{\infty} \PP(E_{k}) < \infty$ 
    and by the first Borel-Cantelli lemma \cite[Theorem 7.2.2]{athreya2006measure}, 
    event $E_k$ only occur finitely many times 
    and there exists $k_0 \ge 0$ 
    such that for all $k \ge k_0$ we have $V^k \in \mathcal L$.

    By Theorem~\ref{thm:acc_AB}, 
    $\lim_{k \to \infty} r(A,B,V^k) = \lambda$ a.s. 
    for some generalized eigenvalue $\lambda$ of $(\herA,B)$. 
    On the other hand, for $k \ge k_0$
    we have that $r(A,B,V^k) > \lambda_2(B^{-1}\herA)$ 
    and $(r(A,B,V^{k}))_{k=0}^\infty$ is monotonically increasing by Theorem~\ref{th:update_gRq}. 
    As a result, 
    it holds $\lambda > \lambda_2(B^{-1}\herA)$ 
    and $\lambda = \mathcal R(A,B)$ a.s.  

    Let $u_1$, \ldots, $u_d$ 
    be the eigenvectors of $B^{-1/2}\herA B^{-1/2}$ 
    forming the basis of $\RR^{d}$ and let $\lambda_j$ 
    be the corresponding eigenvalues. 
    Since
    \[
        B^{-1/2} \herA B^{-1/2} u = \lambda u 
        \quad \text{is equivalent to} \quad
        B^{-1} \herA B^{-1/2} u = \lambda B^{-1/2} u,
    \]
    the eigenvectors of $B^{-1} \herA$ are $v_j = B^{-1/2} u_j$ 
    and the eigenvalues of $B^{-1} A$ and $B^{-1/2} \herA B^{-1/2}$ coincide. 
    Then, with $w^k \coloneqq B^{1/2} v^{k}$,
    we have
    \begin{align*}
        r(A,B,v^{k}) 
        & = \scp{v^{k}}{\herA v^{k}} 
        = \scp{w^k}{B^{-1/2}\herA B^{-1/2} w^{k}}  \\
        & = \sum_{i,\ell=1}^{d} \scp{w^{k}}{u_i} \scp{w^{k}}{u_\ell} \lambda_{\ell} \scp{u_i}{u_\ell}
        = \sum_{\ell=1}^{d} \lambda_{\ell}\scp{w^{k}}{u_\ell}^2 \\
        & \le \mathcal R(A,B) \norm{w^{k}}^2. 
    \end{align*}
    Since $\norm{w^k}^2 = \norm{B^{1/2}v^{k}}^2 = \normB{v^{k}}^2 = 1$ 
    and $r(A,B,v^{k}) \to \mathcal R(A,B)$ as $k \to \infty$, 
    we get an equality in the limit. 
    This is only possible if $\scp{w^{k}}{u_\ell}^2 \to 0$ except for $u_\ell$ 
    corresponding to the eigenvalue $\mathcal R(A,B)$.
\end{proof}

In general, we do not get a convergence of $(v^k)_{k\in \NN}$ to some $v \in G_{\max}$. 
Yet, we show it in the special case $\dim(G_{\max}) =1$.

\begin{lemma}\label{l:conv1dim}
    Let $\dim(G_{\max}) =1$. 
    Then the sequence $(v^k)_{k\in \mathbb Z}$ converges a.s.\ 
    to a generalized eigenvector of $(\herA,B)$ corresponding to $\mathcal R(A,B)$.      
\end{lemma}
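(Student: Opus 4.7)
The plan is to build on the two a.s.\ convergences from Theorem~\ref{thm:almost_sure_convergence}, namely $r(A,B,v^k) \to \mathcal R(A,B)$ and $\dist(G_{max}, v^k) \to 0$, and to rule out oscillation of the iterates between the two candidate limits. Because $\dim(G_{max})=1$, the intersection $G_{max} \cap \sphere_B^{d-1}$ consists of exactly two antipodal unit vectors $v^*$ and $-v^*$, so every accumulation point of $(v^k)$ lies in $\{v^*,-v^*\}$ a.s., and it suffices to show that the sequence cannot have both as accumulation points.

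First I would separate $v^*$ and $-v^*$ by a super-level set. Fix $\delta > 0$ small enough that $\mathbb B_{\delta,B}(v^*)$ and $\mathbb B_{\delta,B}(-v^*)$ are disjoint. By continuity of $r(A,B,\cdot)$, compactness of $\sphere_B^{d-1}$, and the fact that $\pm v^*$ are its only maximizers, there exists $\varepsilon>0$ with
\[
L_\varepsilon \;\coloneqq\; \{v \in \sphere_B^{d-1} : r(A,B,v) \ge \mathcal R(A,B)-\varepsilon\} \;\subseteq\; \mathbb B_{\delta,B}(v^*) \cup \mathbb B_{\delta,B}(-v^*),
\]
so that $L_\varepsilon$ decomposes into two non-empty disjoint closed pieces $L_\varepsilon^+$ and $L_\varepsilon^-$.

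The key additional ingredient is that along one iteration, $r(A,B,\cdot)$ is \emph{strictly} monotonic on the continuous path $\gamma_k(\tau) \coloneqq R_{v^k}(\tau x^k)$ as $\tau$ traverses the interval between $0$ and $\tau_k$ joining $v^k$ to $v^{k+1}$. Indeed, by \eqref{eq:func_gRq_abcd} this value equals $g(\tau) = (a_k + \tau b_k + \tau^2 c_k)/(1+\tau^2 d_k)$, and by \eqref{eq: derivative r_k} the numerator of $g'$ is a quadratic in $\tau$ whose two real roots have opposite signs (their product is $-1/d_k < 0$) and one of them is $\tau_k$. A sign inspection then shows that $g'$ does not vanish strictly between $0$ and $\tau_k$, so $g$ is strictly monotonic there and $r(A,B,\gamma_k(\tau)) \in [r(A,B,v^k), r(A,B,v^{k+1})]$ along the entire path.

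Putting the pieces together: by Theorem~\ref{thm:almost_sure_convergence} and the monotonicity in Theorem~\ref{th:update_gRq}, a.s.\ there is some $k_0$ such that $v^k \in L_\varepsilon$ for all $k \ge k_0$; by the path monotonicity above, $\gamma_k$ stays inside $L_\varepsilon$, and since it is a connected curve while $L_\varepsilon = L_\varepsilon^+ \sqcup L_\varepsilon^-$, consecutive iterates must lie in the same component. Induction pins $(v^k)_{k\ge k_0}$ into one fixed component, say $L_\varepsilon^+ \subseteq \mathbb B_{\delta,B}(v^*)$. Repeating this argument with $\delta \to 0$, and using the fact that once the sequence has entered $\mathbb B_{\delta_0,B}(v^*)$ for some small $\delta_0$ it can no longer reach the disjoint ball $\mathbb B_{\delta,B}(-v^*)$ for $\delta \le \delta_0$, we conclude $v^k \to v^*$ (or $-v^*$) a.s. The main obstacle is establishing the strict monotonicity of $g$ on the interval between $0$ and $\tau_k$, as it is precisely what prevents a single update from jumping across the low-value belt of $\sphere_B^{d-1}$ separating $L_\varepsilon^+$ from $L_\varepsilon^-$.
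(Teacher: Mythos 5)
Your proposal is correct, but it takes a genuinely different route from the paper. The paper argues directly on the inner product $\scp{Bv^*}{v^k}$: it introduces the sign sequence $e_k$ with $\normB{e_k v^* - v^k}\to 0$, and rules out a sign flip by an explicit computation using the update $v^{k+1}=(v^k+\tau_k x^k)/\normB{v^k+\tau_k x^k}$, the orthogonality $\scp{x^k}{Bv^k}=0$, the bound $\abs{\scp{B(v^*-v^k)}{x^k}}\le \varepsilon\norm{B}^{1/2}$ and $d_k\ge \norm{B^{-1}}^{-1}$, which shows $\scp{Bv^*}{v^k}$ cannot jump from above $\tfrac12$ to below $-\tfrac12$ in one step once $\varepsilon<\min\{1,\tfrac12\kappa(B)^{-1/2}\}$. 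You instead exploit the exact line search more fully: since the two roots of the numerator of $g'$ in \eqref{eq: derivative r_k} have product $-1/d_k<0$, the quotient is strictly monotone along the whole retraction path $\tau\mapsto R_{v^k}(\tau x^k)$ between $0$ and $\tau_k$, so this connected curve stays in the super-level set $L_\varepsilon$, which by compactness splits into two disjoint pieces around $\pm v^*$; connectedness then pins the tail of the sequence into one piece, and shrinking $\delta$ gives convergence. Your topological argument is more qualitative and avoids the $\kappa(B)$-dependent estimates, at the price of needing the containment $L_\varepsilon\subseteq \mathbb B_{\delta,B}(v^*)\cup\mathbb B_{\delta,B}(-v^*)$ and the monotonicity-along-the-path observation, which is specific to the optimal step size; the paper's computation is quantitative (explicit $\varepsilon$ threshold) and uses only the endpoint update formula. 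Two small points you should make explicit: $b_k\neq 0$ a.s.\ (Theorem~\ref{l: initialization main} and Lemma~\ref{l: eigenvectors and b}) is needed for the strict monotonicity at $\tau=0$, and the final ``$\delta\to 0$'' step is cleanest stated as: for each smaller $\delta'$ the tail lies in one component of the corresponding smaller super-level set, and since it already lies in $\mathbb B_{\delta,B}(v^*)$, disjointness forces that component to be the one around $v^*$.
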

\begin{proof}
    By Theorem~\ref{thm:almost_sure_convergence}, we have 
    \begin{equation*}
        \dist(G_{\max}, v^k) \to 0, 
        \quad \text{as} \quad k \to \infty \quad \text{a.s.}
    \end{equation*}
    Since $\dim(G_{\max})=1$, 
    we can write $G_{\max} = \{ \alpha v \mid \alpha \in \RR\}$, 
    where $v$ is an eigenvector with $\normB{v} = 1$ 
    corresponding to $\mathcal R(A,B)$. 
    Since $\normB{v^k} = 1$ for all $k \in \NN$, 
    the only two possible accumulation points of $v^k$ are $v$ and $-v$. 
    Hence, 
    there exists a random sequence $(e_k)_{k\in \NN} \subseteq \{-1,1\}$, 
    such that $\normB{e_k v^* - v^k } \to 0$ as $k \to \infty$. 
    Then, 
    for $0 < \varepsilon < \min\{1, \tfrac{1}{2}\kappa^{-1/2}(B)\}$, 
    there exists some random $k_{\varepsilon} \in \NN$ such that 
    \begin{equation}\label{eq: conv dim 1 tech}
        \normB{ e_k v^* - v^k } < \varepsilon,
        \quad \text{for all } k \geq k_{\varepsilon} .
    \end{equation}
    We show that the sequence $(e_k)_{k=0}^{\infty}$ 
    is constant for all $k \ge k_\varepsilon$ by contradiction. 
    Let us assume that $e_k = 1$ and $e_{k+1} = -1$, 
    for some $k > k_{\varepsilon}$. Expanding \eqref{eq: conv dim 1 tech} 
    gives
    $$
        \tfrac{1}{2} 
        < 1 - \tfrac{\varepsilon}{2} 
        < \scp{B v^*}{v^k},
        \quad \text{and} \quad 
        \scp{B v^*}{v^{k+1}} < \tfrac{\varepsilon}{2} - 1 
        < -\tfrac{1}{2}.
    $$
    On the other hand,
    following update of Algorithm~\ref{alg:gRq} and $x^k \in T_{v^k}$, we get
    \begin{equation*}
        \scp{B v^*}{v^{k+1}}
        = \frac{\scp{B v^*}{v^k + \tau_k x^k}}{\sqrt{1 + \tau_k^2 d_k}}
        = \frac{\scp{B v^*}{v^k} + \tau_k \scp{B(v^* - v^k)}{x^k}}{\sqrt{1 + \tau_k^2 d_k}}.
    \end{equation*}
    We bound the second term by
    \begin{equation*}
        \abs{\scp{B (v^* - v^k)}{x^k}}
        \le \normB{(-1)^{e_k}v^* - v^k} \normB{x^k}
        \le \varepsilon \norm{B}^{1/2},
            \end{equation*}
    which with $d_k = \scp{x^k}{B x^k} \ge \norm{B^{-1}}^{-1}$ yields the contradiction
    \begin{equation*}
        \scp{B v^*}{v^{k+1}} 
        > \frac{\scp{B v^*}{v^k}}{\sqrt{1 + \tau_k^2 d_k}} - \frac{\varepsilon|\tau_k|\lambda_1^{1/2}(B)}{\sqrt{1 + \tau_k^2d_k}}
        > - \frac{\varepsilon \lambda_1^{1/2}(B)}{\sqrt{d_k}}
        > - \frac{1}{2}. 
    \end{equation*}
    Analogously, we show that $e_k = -1$ 
    and $e_{k+1} = 1$ is impossible, 
    implying that the sequence $(e_k)_{k\in \NN}$ 
    takes the same value for $k \ge k_\varepsilon$.
\end{proof}

\section{Convergence Rates}\label{sec:convergence_rates}
In this section, we derive a convergence rates for $(b_k)_{k\in \mathbb N}$ and the error in the eigenvector relation 
$$
    \norm{\herA v^k - \langle v^k, A v^k\rangle B v^k}.
$$
We start with the sublinear convergence result for $(b_k)_{k\in \mathbb N}$.

\begin{lemma}    \label{l:min_b_k}
    Let $\dim(G_{\max}) < d-1$.
    Then, we have
    \begin{align*}
        \min_{0 \leq k \leq n} b_k^2
        \leq \tfrac{8}{n+1} \norm{A}^2 \norm{B^{-1}} (1 + 3\kappa(B))
        \quad \text{a.s.} 
    \end{align*}
\end{lemma}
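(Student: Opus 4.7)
My plan is to combine the identity $\tfrac{1}{2}\tau_k b_k = a_{k+1}-a_k$ from Theorem~\ref{th:update_gRq} with the factorization $b_k^2 = \tau_k b_k \,(\tau_k b_k d_k - 2(c_k - a_k d_k))$ from \eqref{eq:conv_b_k}, and then exploit that the second factor is uniformly bounded (this is exactly what was shown in \eqref{eq: bound bsq tech} in the proof of Theorem~\ref{th:b_k_to_zero}). After that, telescoping provides the sublinear rate.

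Concretely, I would proceed in three short steps. First, from Theorem~\ref{th:update_gRq}, $\tau_k b_k = 2(a_{k+1}-a_k) > 0$ a.s., and summing telescopically,
\begin{align*}
\sum_{k=0}^{n} \tau_k b_k \;=\; 2(a_{n+1} - a_0) \;\le\; 4\,\norm{A}\,\norm{B^{-1}},
\end{align*}
where the last inequality uses $|a_k| \le \norm{A}\,\norm{B^{-1}}$ as established in the proof of Theorem~\ref{th:b_k_to_zero}. Second, combining \eqref{eq:conv_b_k} with the bound \eqref{eq: bound bsq tech} gives, a.s.\ and for every $k$,
\begin{align*}
0 \;<\; b_k^2 \;=\; \tau_k b_k \bigl(\tau_k b_k d_k - 2(c_k - a_k d_k)\bigr) \;\le\; \norm{A}\bigl(2 + 6\kappa(B)\bigr)\,\tau_k b_k.
\end{align*}

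Third, summing this inequality over $k=0,\ldots,n$ and using the telescoping bound from step one yields
\begin{align*}
(n+1)\min_{0\le k\le n} b_k^2 \;\le\; \sum_{k=0}^{n} b_k^2 \;\le\; \norm{A}\bigl(2 + 6\kappa(B)\bigr) \sum_{k=0}^{n}\tau_k b_k \;\le\; 8\,\norm{A}^2\,\norm{B^{-1}}\,\bigl(1 + 3\kappa(B)\bigr),
\end{align*}
and dividing by $n+1$ gives exactly the claimed rate.

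There is essentially no hard step here: the ingredients are already isolated in Theorems~\ref{th:update_gRq} and \ref{th:b_k_to_zero}. The only minor care needed is to note that the factor $\tau_k b_k$ is positive a.s.\ under the assumption $\dim(G_{\max})<d-1$ (so that Lemma~\ref{l: eigenvectors and b} and Theorem~\ref{l: initialization main} guarantee $b_k\ne 0$), which legitimizes the multiplicative bound $b_k^2 \le \norm{A}(2+6\kappa(B))\,\tau_k b_k$ without sign issues, and that the telescoping uses the uniform $\ell^\infty$ bound on $a_k$ rather than the stepwise bound \eqref{eq: bound ak} to control the total.
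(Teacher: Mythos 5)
Your proposal is correct and follows essentially the same route as the paper's proof: it combines the factorization \eqref{eq:conv_b_k} with the uniform bound \eqref{eq: bound bsq tech}, uses $\tau_k b_k = 2(a_{k+1}-a_k)$ to telescope, bounds $a_{n+1}-a_0$ via $|a_k|\le \norm{A}\norm{B^{-1}}$, and finishes with $(n+1)\min_k b_k^2 \le \sum_k b_k^2$. No gaps.
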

\begin{proof}
    In the proof of Theorem \ref{th:b_k_to_zero}, we showed \eqref{eq:conv_b_k} and \eqref{eq: bound bsq tech}. Combining these inequalities yields
    \begin{align}
        0 
        < b_k^2 
        &= \tau_k b_k (\tau_k b_k d_k - 2(c_k - a_k d_k)) \\
        &\leq \tau_k b_k \norm{A}(2 + 6 \kappa(B)),
        \quad \text{a.s.}
    \end{align}
    By Theorem~\ref{th:update_gRq} we have $\tau_k b_k = 2(a_{k+1} - a_{k})$. 
    Therefore, using \eqref{eq: bound ak}, we obtain 
    \begin{align}
        0 < \sum_{k = 0}^n b_k^2 
        &\leq 2 \norm{A}(1+ 3\kappa(B)) \sum_{k = 0}^n \tau_k b_k 
        = 4\norm{A}(1 + 3\kappa(B)) \sum_{k = 0}^n (a_{k+1} - a_k) \\
        & = 4\norm{A}(1 + 3\kappa(B))(a_{n+1} - a_0)
        \leq 8 \norm{A}^2 \norm{B^{-1}}(1 + 3\kappa(B)) \quad \text{a.s.}
        \label{xxx}
    \end{align}
    On the other hand, it holds $\sum_{k = 0}^n b_k^2 \geq (n+1)\min_{0 \leq k \leq n} b_k^2$,
    which yields the assertion.
\end{proof}

Further, we will apply the following uniform bound, 
whose proof is given in the \ref{sec:proofs}.

\begin{lemma}
    \label{lem:bound_B_kappa}
    Let $B \in \pd, v \in \sphere_B^{d-1}$ and $y \in \RR^d$.
    Then it holds
    \begin{equation*}
        \norm{(I_d - Bvv^\tT)y} \leq \sqrt{\kappa(B)} \norm{(I_d - \tfrac{Bv (Bv)^\tT}{\norm{Bv}^2}) y}.
    \end{equation*}
\end{lemma}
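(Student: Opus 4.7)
My plan is to exploit that $Q \coloneqq I_d - Bvv^\tT$ is an oblique projection sharing the same kernel as the orthogonal projection $P_v$, and then reduce the inequality to a scalar bound controlled by $\kappa(B)$ via a decomposition of $v$ along and transverse to $Bv$.

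First, I would use $v^\tT Bv = \normB{v}^2 = 1$ to compute $QBv = Bv - Bv\,(v^\tT Bv) = 0$, so $\mathrm{span}(Bv) \subseteq \ker Q$. Since the range of $I_d - P_v$ is exactly $\mathrm{span}(Bv)$, this gives the identity $QP_v = Q$, and hence
\[
\norm{(I_d - Bvv^\tT)\, y} = \norm{Q P_v y}.
\]
It therefore suffices to show $\norm{Qz} \le \norm{Bv}\,\norm{v}\,\norm{z}$ for every $z \in T_v = (Bv)^\perp$, since the third step will establish $\norm{Bv}\norm{v} \le \sqrt{\kappa(B)}$.

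Second, for such $z$ I expand
\[
\norm{Qz}^2 = \norm{z - (v^\tT z) Bv}^2 = \norm{z}^2 - 2(v^\tT z)\scp{z}{Bv} + (v^\tT z)^2 \norm{Bv}^2 = \norm{z}^2 + (v^\tT z)^2 \norm{Bv}^2,
\]
using $\scp{z}{Bv} = 0$. To control $v^\tT z$, I decompose $v = \tfrac{Bv}{\norm{Bv}^2} + v_\perp$ with $v_\perp \in (Bv)^\perp$; the coefficient is forced by $\scp{v}{Bv} = 1$, and Pythagoras yields $\norm{v_\perp}^2 = \norm{v}^2 - \norm{Bv}^{-2}$. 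Since $z \perp Bv$ we get $v^\tT z = v_\perp^\tT z$, and Cauchy--Schwarz gives $(v^\tT z)^2 \le (\norm{v}^2 - \norm{Bv}^{-2})\norm{z}^2$. Substituting back collapses the constant term and delivers $\norm{Qz}^2 \le \norm{Bv}^2 \norm{v}^2 \norm{z}^2$.

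Third, I bound $\norm{Bv}^2 \norm{v}^2 \le \kappa(B)$: setting $w \coloneqq B^{1/2} v$, so that $\norm{w}^2 = v^\tT B v = 1$, we obtain $\norm{Bv}^2 = w^\tT B w \le \lambda_{\max}(B)$ and $\norm{v}^2 = w^\tT B^{-1} w \le 1/\lambda_{\min}(B)$, whose product is exactly $\kappa(B)$. Combining with the second step gives $\norm{(I_d - Bvv^\tT) y} \le \sqrt{\kappa(B)}\norm{P_v y}$, as required. The main obstacle is noticing the decomposition $v = Bv/\norm{Bv}^2 + v_\perp$: a naive triangle inequality applied to $Qy = P_v y + \alpha Bv$ would only produce a factor of $\kappa(B)$ rather than its square root, and the identity $QP_v = Q$ is what lets the sharper bound on $T_v$ extend to all of $\RR^d$.
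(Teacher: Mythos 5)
Your proof is correct and follows essentially the same route as the paper: you first use $\mathrm{span}(Bv)\subseteq\ker(I_d-Bvv^\tT)$ to reduce to vectors in $T_v$, then bound the action of $I_d-Bvv^\tT$ on $T_v$ by $\norm{v}\norm{Bv}$ (your Cauchy--Schwarz with $v_\perp=P_v v$ is the same computation as the paper's exact supremum over the unit sphere of $T_v$), and finally bound $\norm{v}^2\norm{Bv}^2\le\kappa(B)$ via $w=B^{1/2}v$. In fact your third step spells out a detail the paper leaves implicit, so nothing is missing.
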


The above lemmas allow us to establish an a.s. sublinear convergence rate 
to the critical point of $f$. 
Since such points are generalized eigenvectors of $(\herA,B)$, 
this is in line with previously established a.s. convergence of $(v^k)_{k \ge 0}$ 
to $G_{\max}$ in Theorem~\ref{thm:almost_sure_convergence}. 
We now also provide the convergence rate for the minimal squared residual
\[
    \min_{k = 0, \ldots, n} \norm{\herA v^k - \langle v^k, A v^k\rangle  B v^k}^2.
\]

\begin{theorem}
    \label{thm:conv_rate_eigen_equation}
    Let $\dim(G_{\max}) < d-1$.
    Then, 
    we have $\grad f(v^k) \to 0$ as $k \to \infty$ a.s. and
    \[
        \min_{k = 0, \ldots, n} \| \grad f(v^k) \|^2
        \le \frac{8(d-1)}{n+1} \norm{A}^2 \norm{B^{-1}} (1 + 3\kappa(B)) \quad \text{a.s.}
    \]
    Moreover, the minimal squared residual satisfies
    \[
        \min_{k = 0, \ldots, n} \norm[\Big]{\herA v^k - \scp{v^k}{A v^k} B v^k}^2 
        \le \frac{8(d-1)}{n+1} \norm{A}^2 \norm{B^{-1}} \kappa(B) (1 + 3\kappa(B)) 
        \quad \text{a.s.}
    \]
\end{theorem}
\begin{proof}
    For the gradient, we combine Lemma \ref{l:Eb_k} with \eqref{xxx} to a.s. get    
    \[
        \sum_{k=0}^{n} \| \grad f(v^k) \|^2 
        = (d-1) \sum_{k = 0}^n \EE_{x^k \sim X^k | V^k = v^k} [b_k^2]  
        \leq 8(d-1) \norm{A}^2 \norm{B^{-1}}(1 + 3\kappa(B)).
    \]
    Thus, 
    series $\sum_{k=0}^{\infty} \| \grad f(v^k) \|^2$ converge a.s. 
    and the summand $\| \grad f(v^k) \|^2$ vanishes a.s. 
    Furthermore, it a.s. holds 
    \begin{align*}
        \min_{k = 0, \ldots, n} \| \grad f(v^k) \|^2 
        & \le \frac{1}{n+1} \sum_{k=0}^{n} \| \grad f(v^k) \|^2 \\
        &  \leq \frac{8(d-1)}{n+1} \norm{A}^2 \norm{B^{-1}}(1 + 3\kappa(B)).
    \end{align*}
    To finish the proof, we observe that
    \begin{align*}
        \herA v^{k} - \scp{v^{k}}{\herA v^{k}} B v^{k}
        = (I_d - B v^{k}(v^{k})^\tT) \herA v^{k}
    \end{align*}
    and, consequently, by Lemma~\ref{lem:bound_B_kappa}
    \begin{align*}
        \norm[\Big]{\herA v^{k} - \scp{v^{k}}{\herA v^{k}} B v^{k}}^2
        & \le \norm{(I_d - B v^{k}(v^{k})^\tT) \herA v^{k}} \\
        & \le \kappa(B) \norm{(I_d - \tfrac{Bv (Bv)^\tT}{\norm{Bv}^2}) \herA v^{k}}^2 \\
        & = \kappa(B) \norm{\grad f(v^k)}^2.
    \end{align*}
    Combining this inequality with the established bound 
    for the norms of the gradients concludes the proof.
\end{proof}

Unfortunately, 
neither rate provides a quantitative convergence estimate for the distance of the iterate to the leading generalized eigenspace, i.e. of $\dist(v^k, G_{\max})$. 
In particular, we have 
\[
    \norm{\herA v^{k} - \mathcal R(A,B) B v^{k}}
    \le
    \norm{\herA v^{k} - \scp{v^{k}}{\herA v^{k}} B v^{k}}
     + (\mathcal R(A,B) - r(A,B,v^k)) \norm{B v^k},
\]
which implies that for establishing the rate of convergence to $G_{\max}$, 
we would need a rate of convergence for $\mathcal R(A,B) - r(A,B,v^k)$, 
and we were not able to derive it.

\section{Connection to the Zeroth-Order Methods}\label{sec: zeroth-order link}

We can also reinterpret Algorithm~\ref{alg:gRq} as a zeroth-order method.

\begin{theorem}\label{th: main stoch gradient}
    It holds
    \[
        b_k x^k = 2\widehat{\grad}_1 f(v^k)
        \quad \text{and} \quad
        \EE_{x^k \sim X^{k} | V^k = v^k}[b_k x^k] 
        = \tfrac{1}{d-1} \grad f(v^k),
    \]
    where $\grad f(v^k)$ is the Riemannian gradients \eqref{eq: riemannian gradient special}
    of $f(v^k) = \scp{v^k}{A v^k}$ on $\sphere^{d-1}_B$ 
    and $\widehat{\grad}_1 f$ is its zeroth-order one-sample approximation \eqref{eq: m-sample gradient} 
    with scaling parameter $\mu = \tau_k$. 
    Consequently, 
    Algorithm~\ref{alg:gRq} can be viewed as a zero-order variant
    of Riemannian gradient ascent \eqref{eq: zero order ascent}.
\end{theorem}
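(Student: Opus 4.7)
The plan is to handle each of the three assertions separately, since they rest on different tools already developed in the paper.

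\textbf{First identity.} Since $x^k$ is normalized and $x^k \in T_{v^k}$ by construction of Algorithm~\ref{alg:gRq}, one has $P_{v^k} x^k = x^k$. Substituting $x^k$ into the one-sample formula \eqref{eq: m-sample gradient} with $\mu = \tau_k$ therefore reduces $\widehat{\grad}_1 f(v^k)$ to $\frac{f(R_{v^k}(\tau_k x^k)) - f(v^k)}{\tau_k}\, x^k$. Using the closed-form expression \eqref{eq:func_gRq_abcd} from Theorem~\ref{prop:stepsize}, I write $f(R_{v^k}(\tau x^k)) - f(v^k) = \frac{\tau(b_k + \tau(c_k - a_k d_k))}{1 + \tau^2 d_k}$. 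Evaluating at $\tau = \tau_k$ and invoking the first-order condition \eqref{eq: derivative r_k}, namely $b_k + 2\tau_k(c_k - a_k d_k) - \tau_k^2 b_k d_k = 0$, rearranges into $2\tau_k(c_k - a_k d_k) = b_k(\tau_k^2 d_k - 1)$. Substituting collapses the numerator to $\tfrac{1}{2} b_k(1+\tau_k^2 d_k)$, so the quotient equals $\tfrac{1}{2} b_k$. Hence $\widehat{\grad}_1 f(v^k) = \tfrac{1}{2} b_k x^k$, which yields the first identity and makes the update $v^{k+1} = R_{v^k}(\tau_k x^k)$ in Algorithm~\ref{alg:gRq} fit the template of \eqref{eq: zero order ascent}.

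\textbf{Second identity.} I rewrite $b_k x^k = 2\scp{x^k}{\herA v^k}\, x^k = 2\, x^k (x^k)^\tT \herA v^k$ and take conditional expectation given $v^k$. Applying the second-moment identity $\EE[x^k (x^k)^\tT \mid v^k] = \tfrac{1}{d-1} P_{v^k}$ from Lemma~\ref{l: direction main} and the explicit Riemannian gradient \eqref{eq: riemannian gradient special} gives $\EE[b_k x^k \mid v^k] = \tfrac{2}{d-1} P_{v^k} \herA v^k = \tfrac{1}{d-1}\grad f(v^k)$.

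\textbf{Convergence rate.} By Lemma~\ref{l:Eb_k}, $\EE_{x^k \mid V^k = v^k}[b_k^2] = \tfrac{1}{d-1}\norm{\grad f(v^k)}^2$, so by the law of total expectation $\EE_{v^k \sim V^k}[\norm{\grad f(v^k)}^2] = (d-1)\EE[b_k^2]$. The displayed inequality in the proof of Lemma~\ref{l:min_b_k} established the almost-sure telescoping bound $\sum_{k=0}^n b_k^2 \le 8\norm{A}^2 \norm{B^{-1}}(1+3\kappa(B))$. Taking expectations gives $\sum_{k=0}^n \EE[b_k^2] \le 8\norm{A}^2 \norm{B^{-1}}(1+3\kappa(B))$, so $\min_{0\le k\le n} \EE[b_k^2] \le \frac{8\norm{A}^2 \norm{B^{-1}}(1+3\kappa(B))}{n+1}$. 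Multiplying by $d-1$ finishes.

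The main obstacle is the first identity: it is a nontrivial cancellation that the zeroth-order difference quotient along $x^k$, evaluated at precisely the one-dimensional maximizer $\tau_k$, recovers exactly $\tfrac{1}{2} b_k x^k$. Once this is in hand, the second identity and the rate estimate follow routinely from Lemmas~\ref{l: direction main}, \ref{l:Eb_k}, and \ref{l:min_b_k}.
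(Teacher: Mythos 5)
Your proposal is correct and follows essentially the same route as the paper: the first identity is exactly the paper's argument (the paper simply cites Theorem~\ref{th:update_gRq}, whose proof contains the cancellation you re-derive from \eqref{eq:func_gRq_abcd} and the first-order condition \eqref{eq: derivative r_k}), and the second identity and the rate bound are obtained, as in the paper, from Lemma~\ref{l: direction main}, Lemma~\ref{l:Eb_k} and the telescoping estimate \eqref{xxx} from Lemma~\ref{l:min_b_k}. The only detail you skip is the degenerate case $b_k = 0$, where your division by $\tau_k$ is undefined; the paper covers it by the convention $\tau_k = \mu = 0$ with $0/0$ interpreted as $0$, while for $b_k \neq 0$ your step is justified since Theorem~\ref{prop:stepsize} gives $\tau_k \neq 0$.
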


\begin{proof}
    If $b_k \neq 0$, by Theorem~\ref{prop:stepsize}we get $\sign(b_k) = \sign(\tau_k)$ and the vectors $b_k x^k$ and $\tau^k x^k$ are aligned. Furthermore, we have $r(A,B,v^k) = f(v^k)$ and Theorem~\ref{th:update_gRq} yields
    \[
        b_k x^k = \frac{2}{\tau_k} \frac{\tau_k b_k}{2} x^k
        = 2 \frac{f( R_{v^{k}}(\tau_k x^k)) -  f(v^{k})}{\tau_k} x^k 
        = 2 \, \widehat{\grad}_1 f(v^k).
    \]
    If $b_k = 0$, then by Theorem~\ref{prop:stepsize},
    we set $\tau_k = 0$ and 
    $b_k x^k = 0 = 2\widehat\grad_1 f(v^k)$ 
    with $\mu = 0 = \tau_k$. Here $0/0$ is interpreted as $0$.
    By Lemma~\ref{l: direction main}, we obtain 
    \begin{align*}
        \label{eq: EE_b_x}
        & \EE_{x^k \sim X^{k} | V^k = v^k}[b_k x^k]
        = \EE_{x^k \sim X^{k} | V^k = v^k}[ x^k \cdot 2 (x^k)^\tT \herA v^k ]  \\
        & \quad = 2 \, \EE_{x^k \sim X^{k} | V^k = v^k} [x^k(x^k)^\tT ] \herA v^k 
        = \tfrac{2}{d-1}P_{v^k} \herA v^k = \tfrac{1}{d-1} \grad f(v^k).
    \end{align*}
\end{proof}

\begin{remark}[Improved Gradient Estimate]\label{discussion}
    Theorem~\ref{th: main stoch gradient} shows that $b_k x^k$ serves as an estimate of the Riemannian gradient.
    We can reduce the variance of this estimate by employing multiple samples. 
    Namely, by sampling independently $x^{k,i}$, $i = 1, \ldots, m$ 
    and computing the corresponding $b_{k,i}$, 
    the vector $\bar x^k = \frac{1}{m} \sum_{i=1}^{m} b_{k,i} x^{k,i}$ 
    is again an estimate of $\grad f(v^k)$ with (conditional) variance reduced by $1/m$. 
    Furthermore, 
    by Theorem~\ref{th: main stoch gradient} $\bar x^k = 2 \widehat{\grad}_m f(v^k)$.
    This observation is the main motivation for Algorithm~\ref{alg:gRq m-sample}. 
    Unlike zeroth-order gradient ascent \eqref{eq: zero order ascent}, 
    we select step sizes $\tau_k$ optimally, 
    leading to a drastically better performance 
    as shown in Section \ref{sec:zeroth-order}.
\end{remark}

\section{Numerical Results}\label{sec:num}

In this section, 
we provide simulations to support our theoretical findings and compare our algorithm with
established methods.
We start with a proof-of-concept experiment 
for the one-sample algorithm in Subsection~\ref{sec:proof-of-concept}
Then, 
in Subsection~\ref{sec:riemannan-aprox}, 
we study how well $x^k$ from the $m$-sample Algorithm~\ref{alg:gRq m-sample} 
approximates the Riemannian gradient in terms of $m$.
We face the problem of ill-conditioned matrices $B$ 
in the generalized Rayleigh quotient in Subsection~\ref{sec:ill-conditioned}. 
Finally, we compare our algorithm with the zeroth-order method   
from \cite{li2023stochastic}, see \eqref{eq: zero order ascent},
in Subsection~\ref{sec:zeroth-order},
and with the deterministic and averaged stochastic Gen-Oja method \cite{bhatia2018genijastreaminggeneralizedeigenvector}
in Subsection~\ref{sec:gen-oja}. 

All algorithms are implemented in Python and the code is publicly available
\footnote{%
\url{https://github.com/JJEWBresch/ZerothOrderGeneralizedRayleighQuotient}.}.
The experiments are performed on an off-the-shelf MacBook Pro 2020 
with Intel Core i5 (4‑Core CPU, 1.4~GHz) and 8~GB~RAM.

\subsection{Proof-of-Concept Example} \label{sec:proof-of-concept}
We apply our approach 
on random Gaussian matrices 
$A \in \RR^{d\times d}$ and random positive definite matrices $B = (\tilde B + d \cdot I_d)^\tT(\tilde B + d \cdot I_d)$
generated by Gaussian matrices $\tilde B \in \RR^{d\times d}$ for $d \in \{10,50,100,500\}$. 
We generate 50 random problems in the described manner
and report the average of their \emph{relative quotient error}
\[
    \textrm{RQE}_k = \frac{\mathcal R(A,B) - r(A,B,v^k)}{\mathcal R(A,B)},
\]
and the \emph{minimal squared residual} in the eigenvector equation
\[
    \textrm{MSQR}_k = \min_{n \leq k} \norm{\herA v^n - \scp{v^n}{\herA v^n} B v^n}^2,
\]
as well as the quantity $|b^k|$, 
whose decay allows us to track the convergence 
of the proposed algorithms as shown in Section~\ref{sec:convergence_rates}.
The results are depicted in Figure~\ref{fig:exp_1}.
We observe linear decay in \textrm{RQE} for all dimensions, 
which slows down as the dimension $d$ increases. 
Since random matrices have a nonzero eigengap 
between the two largest eigenvalues with probability one, 
this behavior can be linked to linear convergence rates discussed, 
e.g., in \cite{alimisis2021distributed, alimisis2024geodesic}.
Both, \textrm{MSQR} and $|b_k|$, indicate sublinear convergence 
as stated in Theorem~\ref{thm:conv_rate_eigen_equation} and Lemma~\ref{l:min_b_k}. 
Figure~\ref{fig:exp_1} also shows that incorporating more samples $m>1$
significantly improves the performance in all metrics, 
and, in particular, for a large $m$, the value $|b_k|$ decays linearly.

\begin{figure}[b!]
    \resizebox{\linewidth}{!}{%
    \begin{threeparttable}
    \begin{tabular}{c c c c c}
        & ${10}$ & ${50}$ & ${100}$ & ${500}$ \\
        \rotatebox{90}{\hspace{1cm} $\abs{b_k}$ \hspace{1.35cm} $\textrm{MSQR}_k$ \hspace{2cm} $\textrm{RQE}_k$}
        & \includegraphics[width=0.5\linewidth, clip=true, trim=10pt 180pt 30pt 80pt]{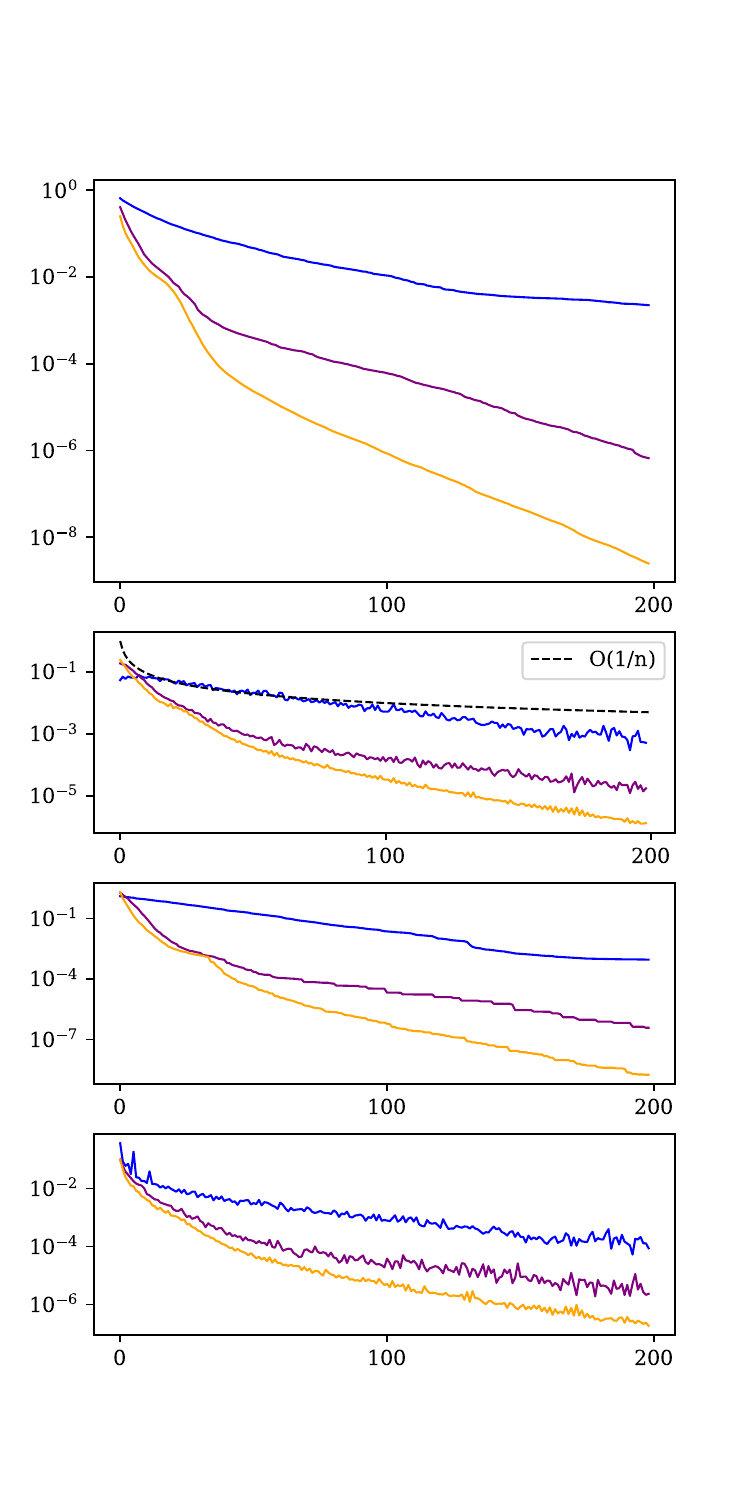}
        & \includegraphics[width=0.5\linewidth, clip=true, trim=10pt 180pt 30pt 80pt]{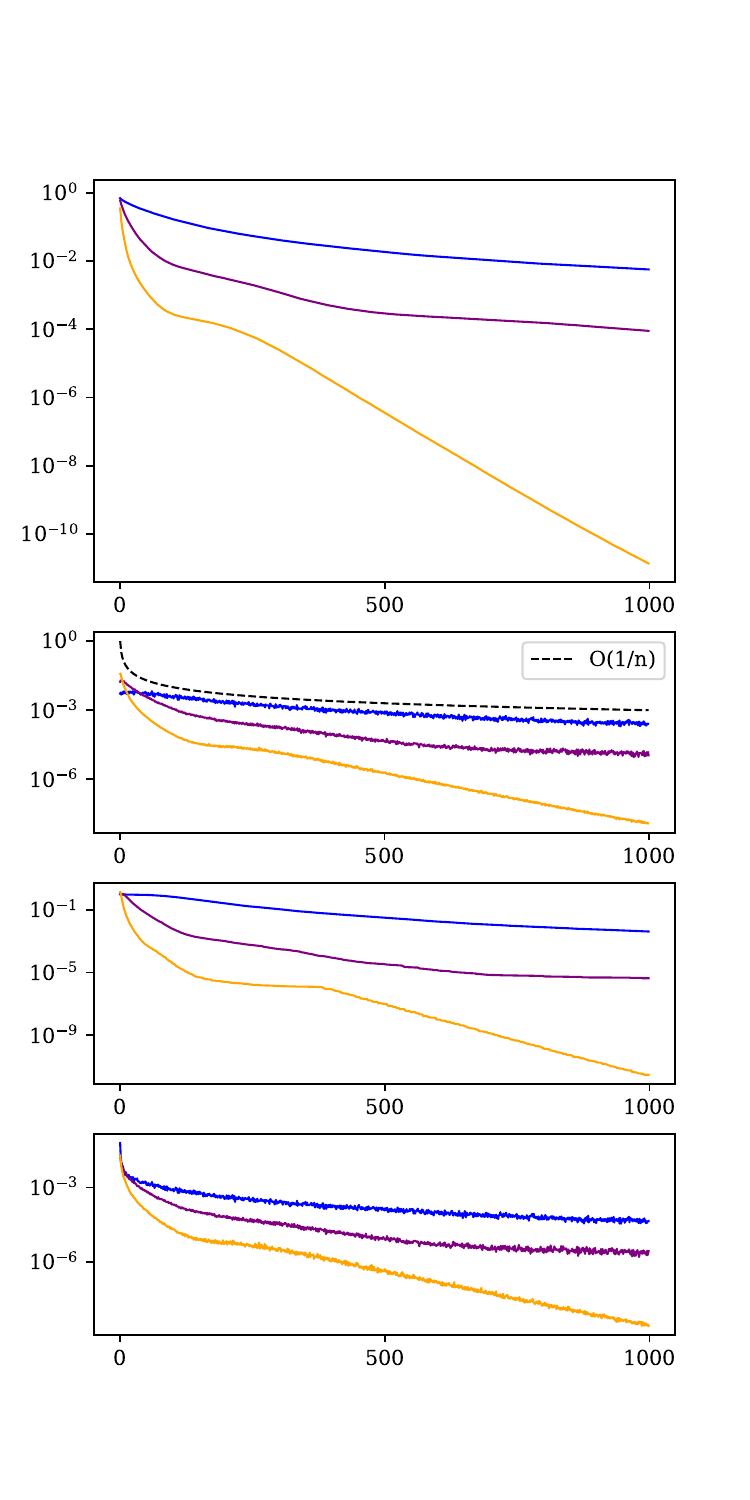}
        & \includegraphics[width=0.5\linewidth, clip=true, trim=10pt 180pt 30pt 80pt]{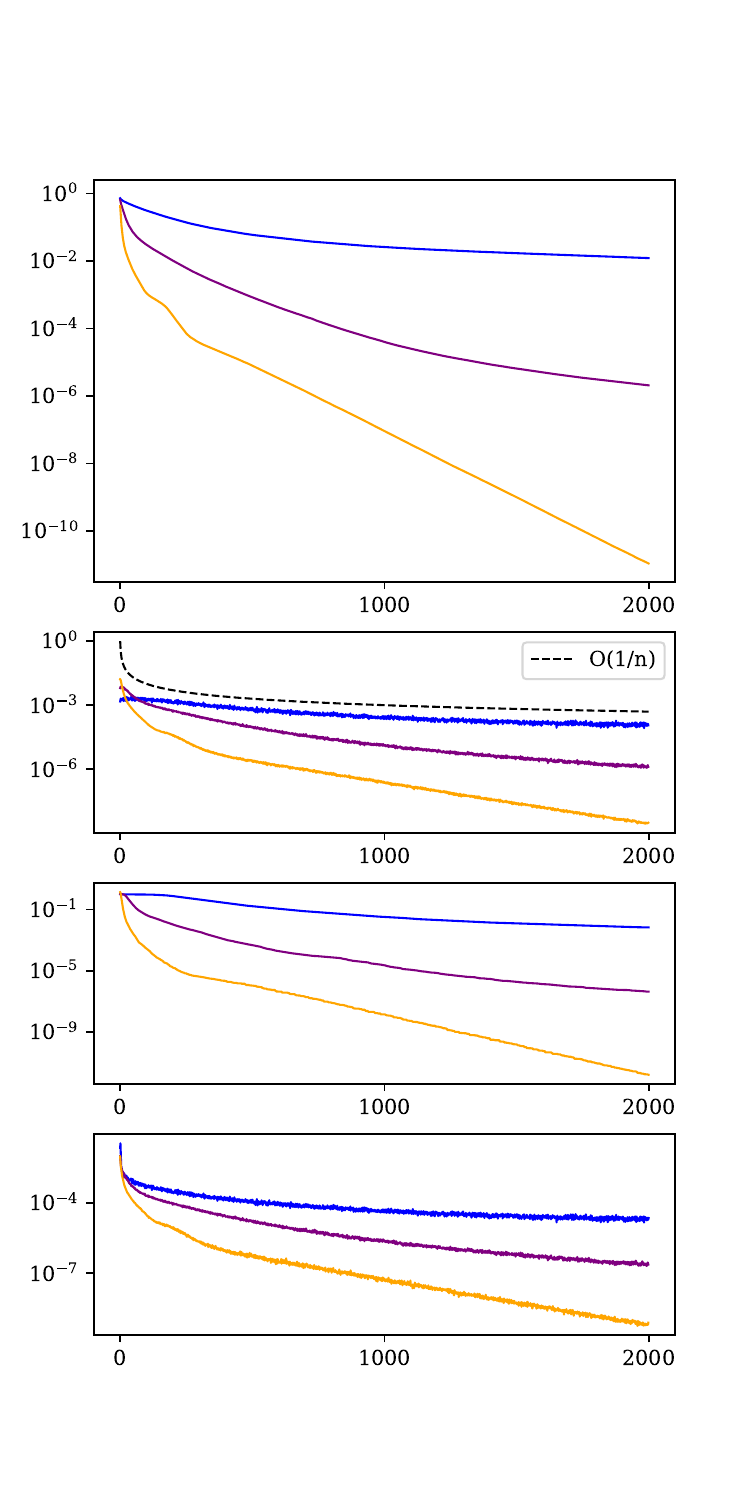}
        & \includegraphics[width=0.5\linewidth, clip=true, trim=10pt 180pt 30pt 80pt]{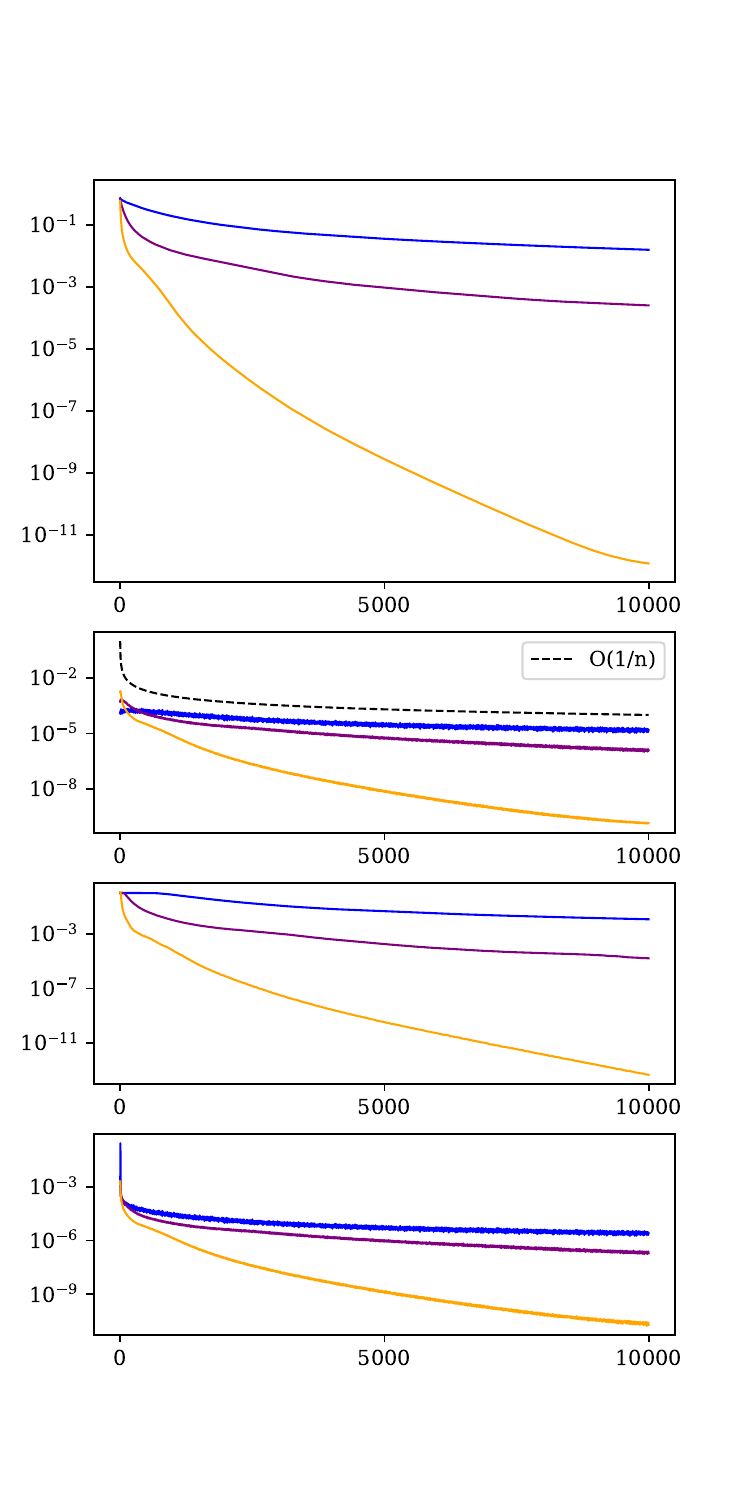} \\
        & Iterations $k$ & Iterations $k$ & Iterations $k$ & Iterations $k$
    \end{tabular}
    \begin{tablenotes}
    \item[] \scalebox{1.75}{\hspace{4.2cm} \scriptsize $m=1$ \lineblue \qquad  $m=10$ \linepurple \qquad $m=100$ \lineorange}
    \end{tablenotes}
    \end{threeparttable}
    }
    \caption{Convergence of our algorithm for different sizes $d\in \{10,50,100,500\}$
    and $m \in \{1,10,100\}$.
    }
    \label{fig:exp_1}
\end{figure}

\subsection{Estimation of the Riemannian Gradient}   \label{sec:riemannan-aprox}

Next we investigate how well $|b_k|^2$ approximates $\tfrac{1}{d-1}\norm{\grad f(v^k)}^2$ 
as established in Theorem~\ref{th: main stoch gradient}. 
Fifty trials are performed for randomly generated matrices 
as in Subsection~\ref{sec:proof-of-concept}.
Since we observed similar behavior of the mean quantities in all dimensions, 
we only report $d = 100$. 
Figure~\ref{fig:comp_g_b} (left) indicates that the two values are well-aligned, 
with the one-sample estimator being quite precise. 
As $m$ increases, $|b_k|^2$ provides an upper bound for the actual Riemannian gradient.
This motivates taking $\abs{b_k}^2$ or its running average as a stopping criterion.
In Figure~\ref{fig:comp_g_b} (right), 
the approximation error $\norm{(d-1) x^{k} - \grad f(v^k)}$ is reported. 
For $m=1$, the difference does not significantly change 
with an increasing number of iterations, 
and the variance of the estimator reduces drastically as $m$ increases, 
supporting the theoretical results of Theorem~\ref{th: main stoch gradient}.
Notably, the error behaves similarly to the norm of the Riemannian gradient, 
aligning with results \cite[Lemma~7]{li2023stochastic},
and the numerical observations of the improved convergence behavior 
with respect to the number of iterations from Subsection~\ref{sec:proof-of-concept}.

\begin{figure}
    \resizebox{\linewidth}{!}{%
    \begin{threeparttable}
    \begin{tabular}{c c}
        \includegraphics[width=0.7\linewidth, clip=true, trim=60pt 15pt 430pt 30pt]{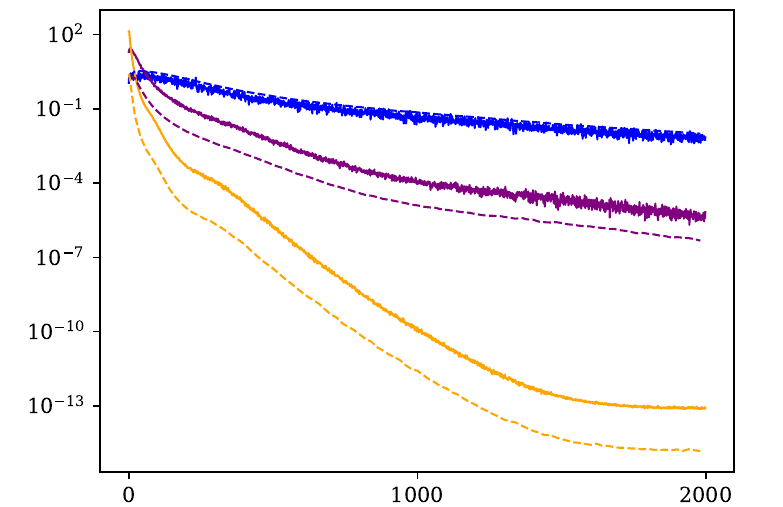}
        & \includegraphics[width=0.7\linewidth, clip=true, trim=430pt 15pt 60pt 30pt]{pdf_plots/gaussian_matrices_comp_grad.pdf} \\
        Iterations $k$ & Iterations $k$
    \end{tabular}
    \begin{tablenotes}
    \item[] \scalebox{1.4}{\hspace{2.75cm} \scriptsize $m=1$ \lineblue \qquad $m=10$ \linepurple \qquad $m=100$ \lineorange}
    \end{tablenotes}
    \end{threeparttable}
    }
    \caption{Error estimation towards Riemannian gradient for $d=100$ and 
    $m \in \{1,10,100\}$.
    \textit{Left}: Comparison of $\abs{b_k}^2$ (solid lines) 
    with $\tfrac{1}{d-1}\norm{\grad f(v^k)}^2$ (dashed lines).
    \textit{Right}: Error $\norm{(d-1) x^{k} - \grad f(v^k)}$.
   }
    \label{fig:comp_g_b}
\end{figure}

\subsection{Convergence Time for Ill-Conditioned $B$}   \label{sec:ill-conditioned}

As observed in Subsection~\ref{sec:proof-of-concept} 
and~\ref{sec:riemannan-aprox}, choosing a larger $m$ improves the convergence behavior.
Next, we study the convergence if $B$ is ill-conditioned.
Therefore, 
we vary the spectrum of $B$ and its condition number $\kappa(B)$, 
and study how it affects the performance of our algorithm.
We fix the dimension $d = 100$ and set eigenvalues of $B$ to $\lambda = 10^{p}$,
where $p$ is uniformly sampled on $(0,q)$ for $q \in \{1,2,3\}$. 
Then, we generate the eigenvectors as columns of a random unitary matrix $Q \in \RR^{d\times d}$ 
by sampling random Gaussian matrices $\tilde B \in \RR^{d\times d}$ 
and performing their QR-decomposition. 
Finally, we set $B = Q \diag(\lambda) Q^\tT$.
In contrast to the setup of Figure~\ref{fig:exp_1} and~\ref{fig:comp_g_b}, 
we execute Algorithm~\ref{alg:gRq m-sample} for a fixed number 
of $(2q-1) \cdot 1000$ iterations 
and rescale the $x$-axis proportionally to the average runtime in seconds. 
The resulting mean quantities over 50 runs are shown in Figure~\ref{fig:exp_2_time}.  

Firstly, 
we observe that despite larger arithmetic complexity
for increasing $m$, vectorization by \texttt{numpy} 
significantly speeds up the implementation such that 
there is almost no difference in runtime between  $m=1$ and $m = 10$, 
and for $m=100$ takes at most twice the amount of time.
Within a fixed time period, the improvement of \textrm{RQE} increases with $m$.

\begin{figure}[t!]
    \resizebox{\linewidth}{!}{%
    \begin{threeparttable}
    \begin{tabular}{c c c c}
        & $q = 1$ & $q = 2$ & $q = 3$ \\
        \rotatebox{90}{\hspace{1.75cm} $\textrm{RQE}_t$}
        & \includegraphics[width=0.5\linewidth, clip=true, trim=10pt 425pt 20pt 80pt]{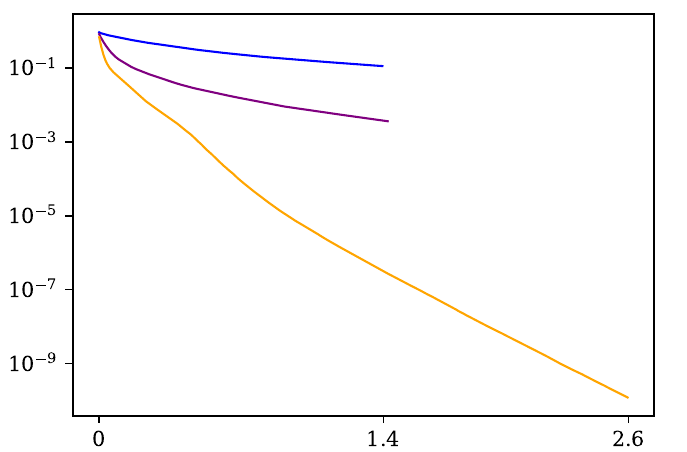}
        & \includegraphics[width=0.5\linewidth, clip=true, trim=10pt 425pt 20pt 80pt]{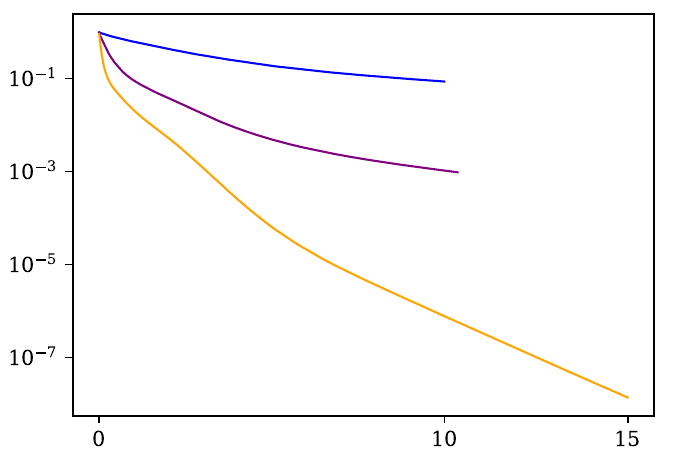}
        & \includegraphics[width=0.5\linewidth, clip=true, trim=10pt 425pt 20pt 80pt]{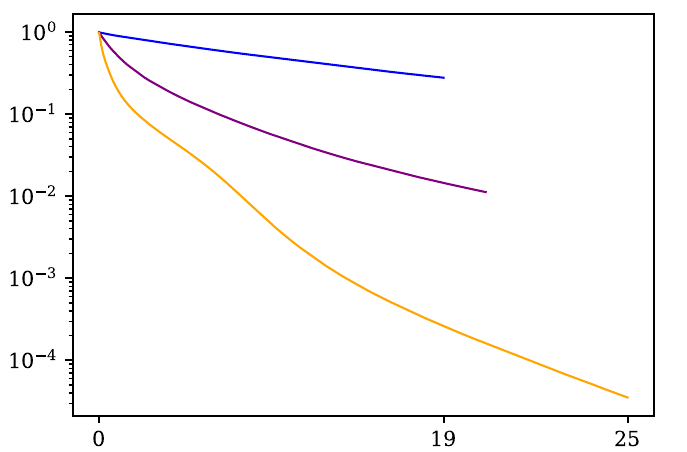}\\
        \rotatebox{90}{\hspace{1cm} $\abs{b_t}$}
        & \includegraphics[width=0.5\linewidth, clip=true, trim=10pt 180pt 20pt 425pt]{pdf_plots/uniform_matrices_average_0_1_100_A_acs_time.pdf}
        & \includegraphics[width=0.5\linewidth, clip=true, trim=10pt 180pt 20pt 425pt]{pdf_plots/uniform_matrices_average_0_2_100_A_acs_time.pdf}
        & \includegraphics[width=0.5\linewidth, clip=true, trim=10pt 180pt 20pt 425pt]{pdf_plots/uniform_matrices_average_0_3_100_A_acs_time.pdf} \\
        & time $t$ in sec. & time $t$ in sec. & time $t$ in sec.
    \end{tabular}
    \begin{tablenotes}
    \item[] \scalebox{1.15}{\hspace{5cm} $m=1$ \lineblue \qquad $m=10$ \linepurple \qquad $m=100$ \lineorange}
    \end{tablenotes}
    \end{threeparttable}
    }
    \caption{Convergence of our algorithm 
    for ill-conditioned
    $B$ with $\kappa(B) \approx 10^q$ for $q = 1,2,3$ (left to right) and $m \in\{1,10,100\}$ for a fixed number of $(2q-1)\cdot 1000$ iterations.  
        }
    \label{fig:exp_2_time}
\end{figure}

\begin{figure}[b!]
    \resizebox{\linewidth}{!}{%
    \begin{threeparttable}
    \begin{tabular}{c c c c}
        & $q = 1$ & $q = 2$ & $q = 3$ \\
        \rotatebox{90}{\hspace{0.5cm} Time $t$ in sec.}
        & \includegraphics[width=0.5\linewidth, clip=true, trim=10pt 5pt 20pt 20pt]{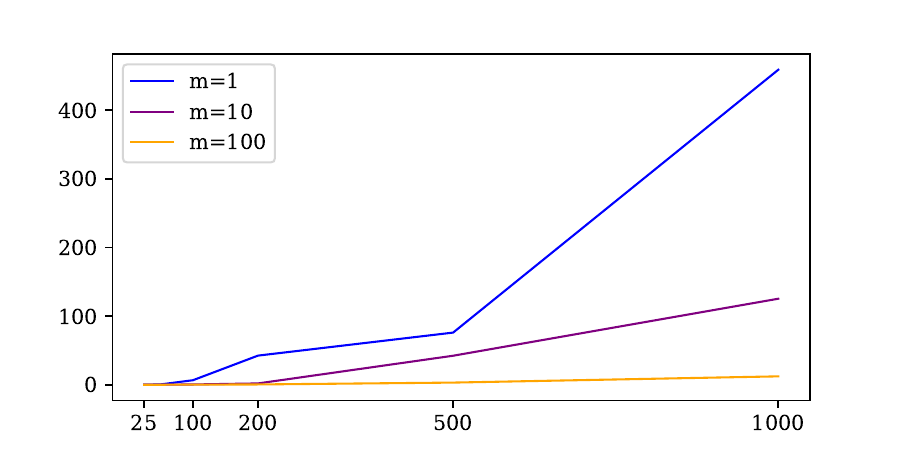}
        & \includegraphics[width=0.5\linewidth, clip=true, trim=10pt 5pt 20pt 20pt]{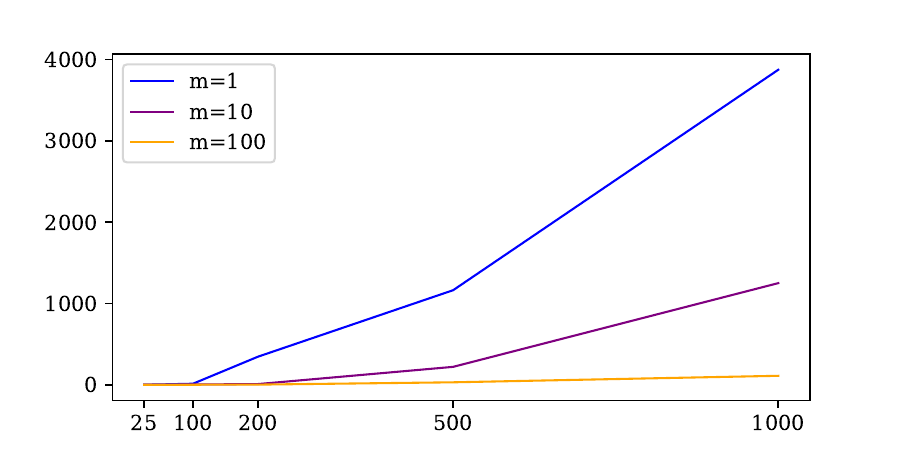}
        & \includegraphics[width=0.5\linewidth, clip=true, trim=10pt 5pt 20pt 20pt]{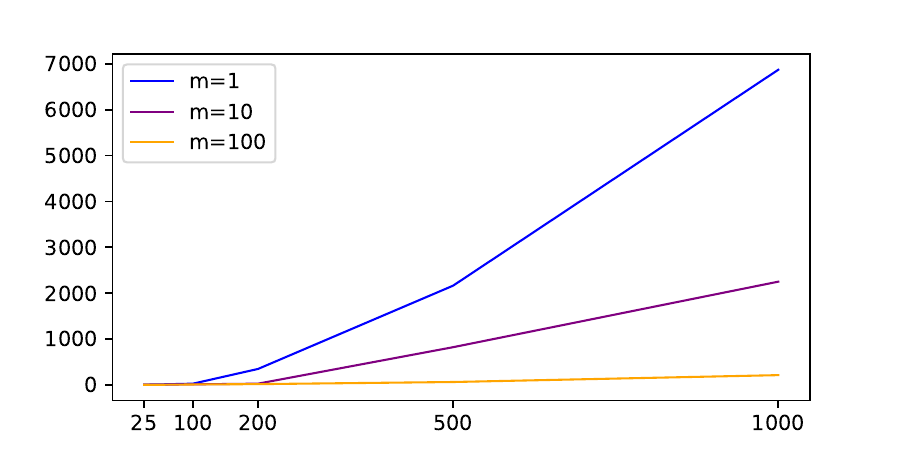}\\
        & Dimension $d$ & Dimension $d$ & Dimension $d$
    \end{tabular}
    \begin{tablenotes}
    \item[] \scalebox{1.6}{\hspace{3cm} \scriptsize $m=1$ \lineblue \qquad $m=10$ \linepurple \qquad $m=100$ \lineorange}
    \end{tablenotes}
    \end{threeparttable}
    }
    \caption{Runtime of our algorithm for $m \in \{1,10,100\}$
    for the matrices from Figure~\ref{fig:exp_2_time} for varying dimension $d$.
    The methods are stopped whenever  $\textrm{RQE} < 10^{-2}$ or after $100\cdot d$ iterations.  
    }
    \label{fig:exp_1_time}
\end{figure}

Secondly, we see that a larger $\kappa(B) \approx 10^{q}$ 
strongly increases the computation time required 
to reach the same error level.
To investigate this further, 
we vary the dimension $d$ and track the time to reach \textrm{RQE} smaller than $0.01$
in Figure~\ref{fig:exp_1_time}. 
Since the algorithm for $m=1$ is quite slow, 
for dimensions $d > 100$, in this case, 
it is stopped after $100d$ iterations if the stopping criterion is not reached. 
We observe that the expected quadratic dependency between time 
and dimension is governed by the $\mathcal O(d^2)$ complexity 
of the matrix-vector products.
Moreover, Figure~\ref{fig:exp_1_time} shows that including more samples 
leads to much faster convergence and lower computation time.

\subsection{Comparison to Zeroth-Order Method in \cite{li2023stochastic}}     \label{sec:zeroth-order}

We now compare our algorithm with \textit{zeroth-order Riemannian gradient ascent} 
(\texttt{ZO-RGA}) \cite[Algorithm~A.1]{li2023stochastic}, 
in \eqref{eq: zero order ascent}  with $m=100$-sample gradient estimators. 
Two variants of \texttt{ZO-RGA} are considered, namely 
\begin{itemize}
    \item[i)] with constant step size $L = \norm{A}(1 + \kappa(B))$, and
    \item[ii)] with Armijo-Goldstein backtracking line search, 
    see \cite[Subsection~4.2]{boumal2023introduction}
\end{itemize}
The step size in our implemented versions is larger 
than the theoretical bounds reported in Theorem~\ref{th: zero-order convergence}, 
as smaller step sizes yielded unsatisfactory performance.  

\begin{figure}[t!]
    \resizebox{\linewidth}{!}{%
    \begin{threeparttable}
    \begin{tabular}{c c c c c}
        & ${d = 10}$ & ${d = 50}$ & ${d = 100}$ & ${d = 500}$ \\
        \rotatebox{90}{\hspace{1.25cm} $\norm{\grad f(v^k)}$ \hspace{2cm} $\textrm{RQE}_k$}
        & \includegraphics[width=0.5\linewidth, clip=true, trim=10pt 30pt 10pt 50pt]{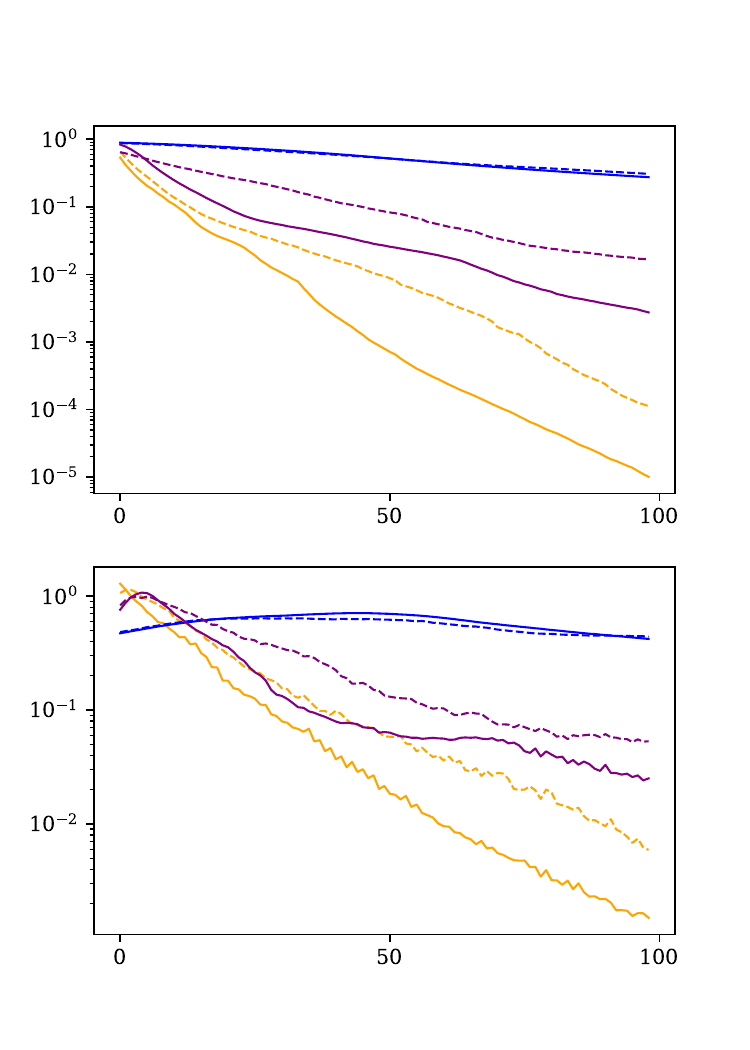}
        & \includegraphics[width=0.5\linewidth, clip=true, trim=10pt 30pt 10pt 50pt]{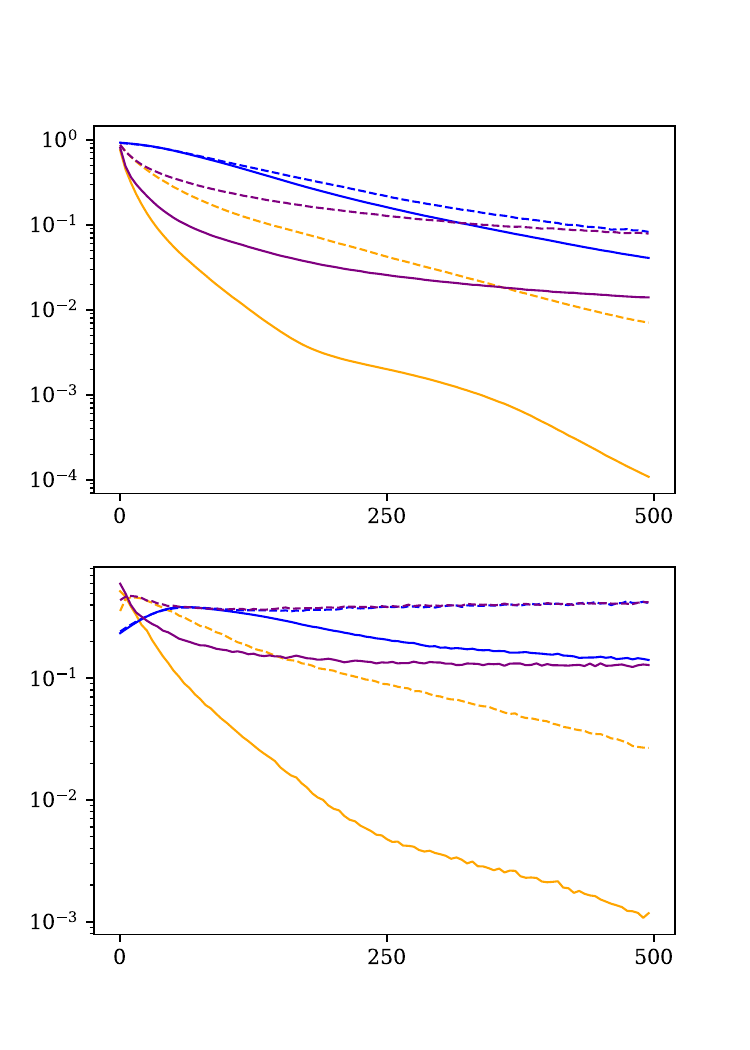}
        & \includegraphics[width=0.5\linewidth, clip=true, trim=10pt 30pt 10pt 50pt]{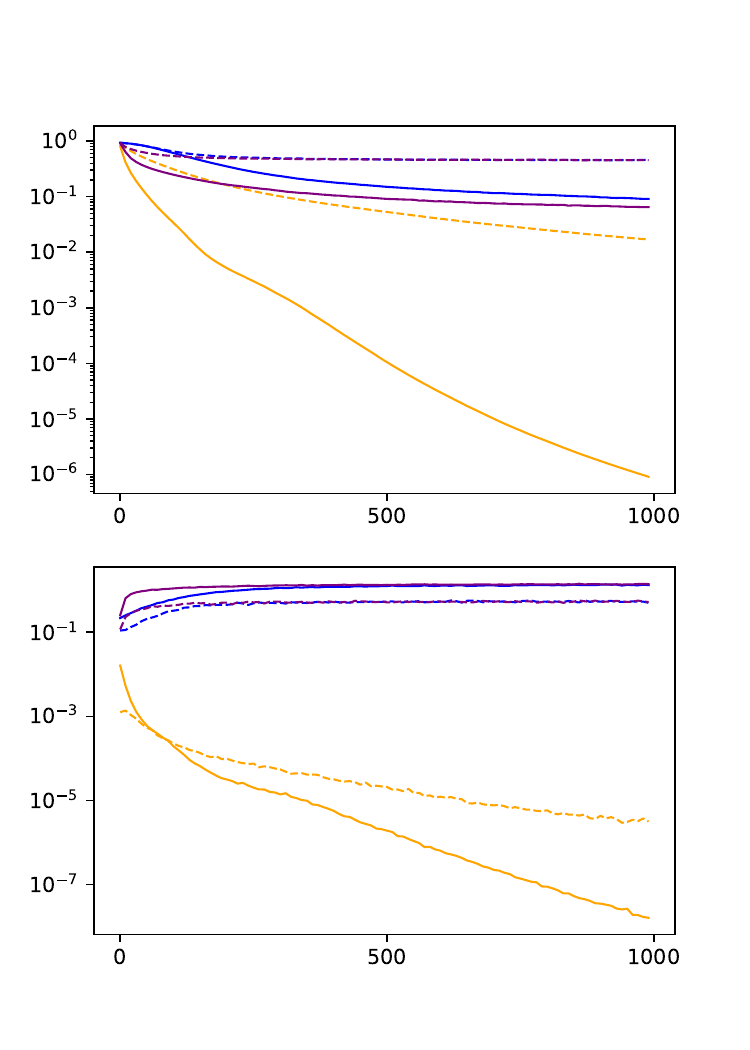}
        & \includegraphics[width=0.5\linewidth, clip=true, trim=10pt 30pt 10pt 50pt]{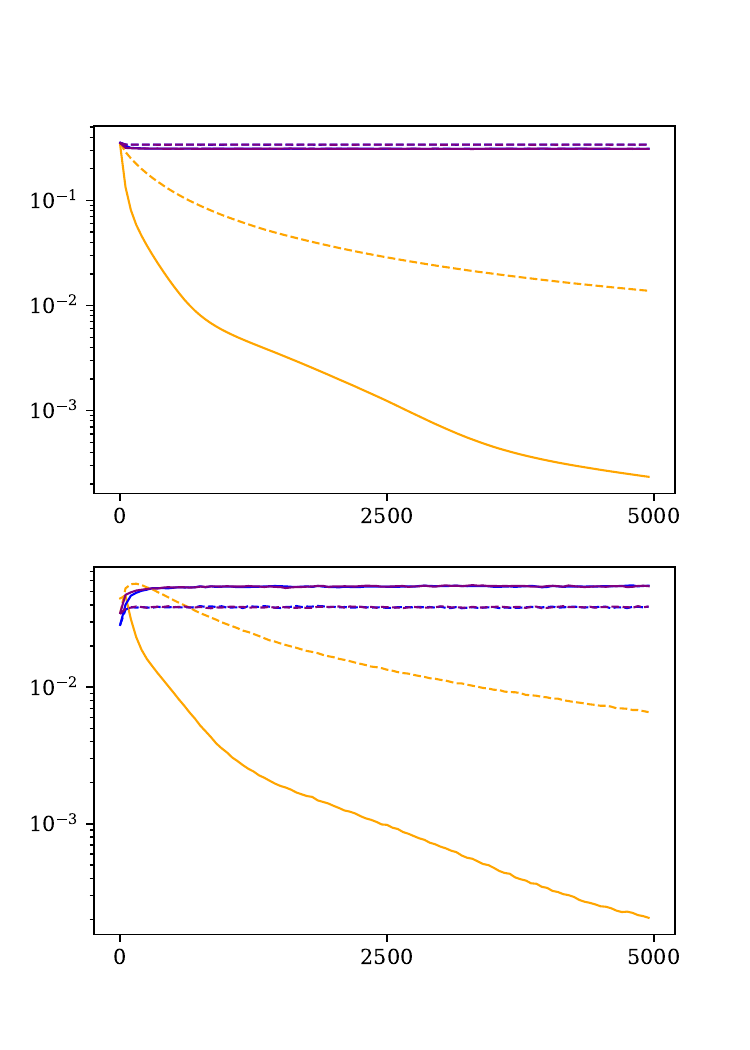} \\
        & Iterations $k$ & Iterations $k$ & Iterations $k$ & Iterations $k$
    \end{tabular}
    \begin{tablenotes}
    \item[] \scalebox{1.6}{\hspace{3cm} Our algorithm \lineorange \hspace{2cm} \texttt{ZO-RGA} i) \lineblue \hspace{2cm} \texttt{ZO-RGA} ii) \linepurple}
    \end{tablenotes}
    \end{threeparttable}
    }
    \caption{ 
    Comparison of our algorithm with the two variants of \texttt{ZO-RGA}  for $m=100$ and $d \in\{10,50,100,500\}$.
    Comparison of $m=10$ (dashed lines) and $m=100$ (solid lines).
    }
    \label{fig:exp_3}
\end{figure}

We aim to solve the \textit{generalized operator norm} problem  
\begin{align} \label{eq:ggRq}
    \argmax_{v \in \sphere^{d-1}} \; \frac{\scp{v}{Av}}{\scp{v}{Bv}}
    = \argmax_{v \in \sphere^{d-1}} \; \frac{\norm{\tilde A v}}{\norm{\tilde B v}},
\end{align}
where $A = \tilde A^\tT \tilde A$ and $B = \tilde B^\tT \tilde B$
with $\tilde A \in \RR^{d\times d}$, $\tilde B \in \RR^{2d \times d}$ 
being random Gaussian matrices to apply our proposed methods. 
Note that $A,B \in \pd$ with probability one. 
The results of both \texttt{ZO-RGA} algorithms in comparison to our algorithm 
for dimensions $d \in \{10,50,100,500\}$ and $m \in \{10,100\}$ 
are reported in Figure~\ref{fig:exp_3}.
In smaller dimensions,
i.e., $d \in \{10,50\}$, \texttt{ZO-RGA} i) performs the worst 
and does not benefit much from greedier $m$-sample gradient estimators, 
see Figure~\ref{fig:exp_3} with $m=10$ (dashed lines) 
compared to $m=100$ (solid lines). 
Notably,
\texttt{ZO-RGA} ii) yields better estimation results, 
but our algorithms still outperform it. 
For $d\in \{100,500\}$, in contrast to our algorithm, 
the \textrm{RQE} decays for both variants of \texttt{ZO-RGA} similarly very slowly.
This indicates the strength of selecting a clever $\tau_k$ 
in our Algorithm~\ref{alg:gRq m-sample}.

\subsection{Karhunen-Lo\`eve Problem}    \label{sec:gen-oja}

In the last experiment, we compare our approach with Gen-Oja methods for the Karhunen-Loève problem.
Here $A \in \RR^{300\times 300}$ is a covariance matrix 
built from an RBF kernel on a one-dimensional grid, 
and $B \in \RR^{300\times 300}$ is a mass matrix with diagonal trapezoidal weight. 
We refer to \cite{bhatia2018genijastreaminggeneralizedeigenvector} 
for more details on the problem.
Similarly to our method, 
both the deterministic and the averaged noisy Gen-Oja implementations 
in \cite{bhatia2018genijastreaminggeneralizedeigenvector} 
require only products with $A$ and $B$.
The resulting estimated eigenfunction, 
as well as the $\sin_B^2$-error defined by
\begin{equation*}
    \sin_B^2(v, v_{\textup{true}})
    \coloneqq 1 - 
    \tfrac{\scp{v}{B v_{\textup{true}}}}{\normB{v}\cdot\normB{v_{\textup{true}}}}
\end{equation*}
for our algorithm and both Gen-Oja methods 
are shown in Figure~\ref{fig:exp_4}.
We observe that the deterministic Gen-Oja method 
admits only an approximation of the solution in the $\sin_B^2$-error,
whereas the stochastic version does not find one. 
While not depicted in Figure~\ref{fig:exp_4}, 
we increased the number of iterations for both Gen-Oja methods up to 2000, 
but this did not yield any improvement. 
It can be seen that the generalized eigenfunction is only roughly approximated. 
We also note that the decrease of the $\sin_B^2$-error of our algorithm with $m=1$ 
is slow and the resulting eigenfunction exhibits high-frequency artifacts. 
In contrast,
our algorithm with a larger $m$ reduces the error much faster 
and provides a better visual match to the ground-truth eigenfunction.  

\begin{figure}[t!]
    \resizebox{\linewidth}{!}{%
    \begin{tabular}{c c}
    \rotatebox{90}{\hspace{2cm} $\sin_B^2$}
    & \includegraphics[width=\linewidth, clip=true, trim=30pt 23pt 10pt 24pt]{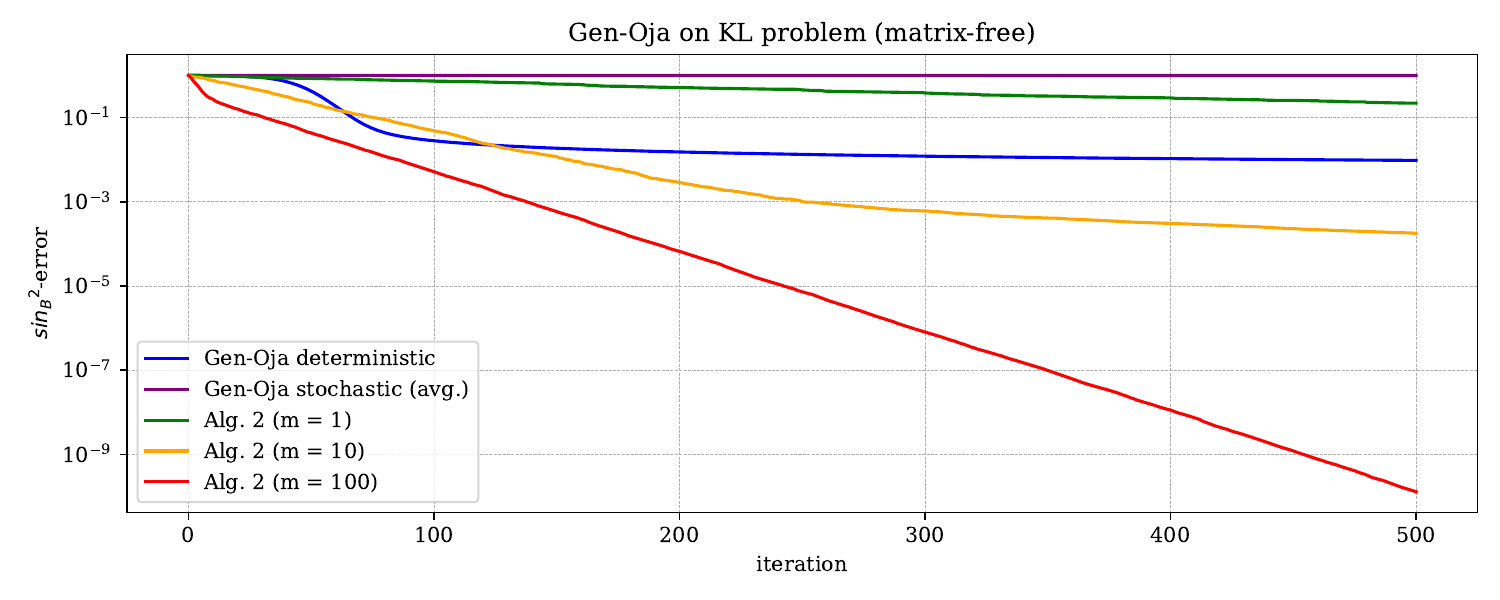} \\
    \multicolumn{2}{r}{\includegraphics[width=1.055\linewidth, clip=true, trim=25pt 8pt 57pt 24pt]{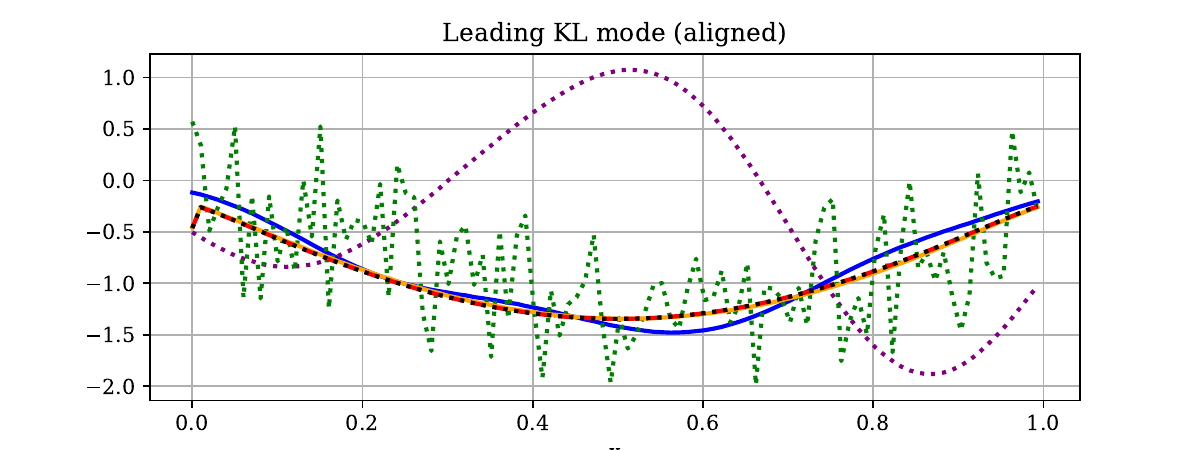}}
    \end{tabular}
    }
    \caption{%
    Solution of a Karhunen–Loève problem.
    \textit{Top:} 
    The $\sin_B^2$-error 
    for the Gen-Oja methods and our algorithm.
    \textit{Bottom:} The approximations of the generalized eigenfunction (black dotted) after 500 iterations. 
    }
    \label{fig:exp_4}
\end{figure}

\subsection{The generalized Rayleigh-Ritz method as subproblem}\label{sec:rayleigh-Ritz}

In Remark~\ref{rem:Sub-Rayleigh quotient problem} 
we saw that our optimal step size selection from Theorem~\ref{prop:stepsize} 
is the Rayleigh-Ritz method \cite{Rayleigh2011,Ritz1909} 
for two-dimensional $\text{span}\{v^k, x^k\}$.
Now,
inspired by the averaged update direction procedure in Algorithm~\ref{alg:gRq m-sample} 
we can also use Rayleigh-Ritz method 
for the $(m+1)$-dimensional hyperplane $\text{span}\{v^k, x^{1,k}, ..., x^{m,k}\}$. 
Here, $v^k \in \sphere_B^{d-1}$ is the current iterate 
and $\{x^{1,k},...,x^{m,k}\} \subset T_v \cap \sphere^{d-1}$ are sampled 
as in Algorithm~\ref{alg:gRq m-sample}.
Hence, 
we define
\begin{equation*}
    W_m^k \coloneqq [v^k, x^{1,k}, ..., x^{m,k}], 
    \quad 
    A_m^k \coloneqq (W_m^k)^{\tT} A W_m^k
    \quad \text{and} \quad 
     B_m^k \coloneqq (W_m^k)^{\tT} B W_m^k,
\end{equation*}
and search for the leading generalized eigenvector $w^k \in \RR^{m+1}$ of $(A_m^k, B_m^k)$.  

Then, 
we set the next iterate $v^{k+1} = \tilde v^{k+1} / \norm{\tilde v^{k+1}}_B$ 
with $\tilde v^{k+1} \coloneqq W_m^k w^k$, 
the optimal linear combination in $\text{span}\{v^k, x^{1,k}, ..., x^{m,k}\}$.  
Notably, there is no closed-form optimal solution, 
and finding it constitutes a subroutine 
that is solved numerically via \texttt{scipy.linalg.eigh}.

We compare the proposed Rayleigh-Ritz-based method 
with Algorithm~\ref{alg:gRq m-sample} for Rayleigh quotient maximization 
in dimensions $d \in \{100,250\}$ and for $m \in \{1,10,50,100\}$ samples. 
We run both algorithms for $2000$ iterations and $50$ randomly sampled matrices 
following the construction from Section~\ref{sec:proof-of-concept}.
The resulting mean values of the RQE and MSQE as functions of runtime 
are reported in Figure~\ref{fig:exp_5}.

\begin{figure}
    \resizebox{\linewidth}{!}{%
    \begin{threeparttable}
    \begin{tabular}{c c c}
        & $d = 100$ & $d = 250$ \\
        \rotatebox{90}{\hspace{1.7cm} MSQE$_t$ \hspace{3.3cm} RQE$_t$}
        & \includegraphics[width=0.75\linewidth, clip=true, trim=10pt 23pt 10pt 40pt]{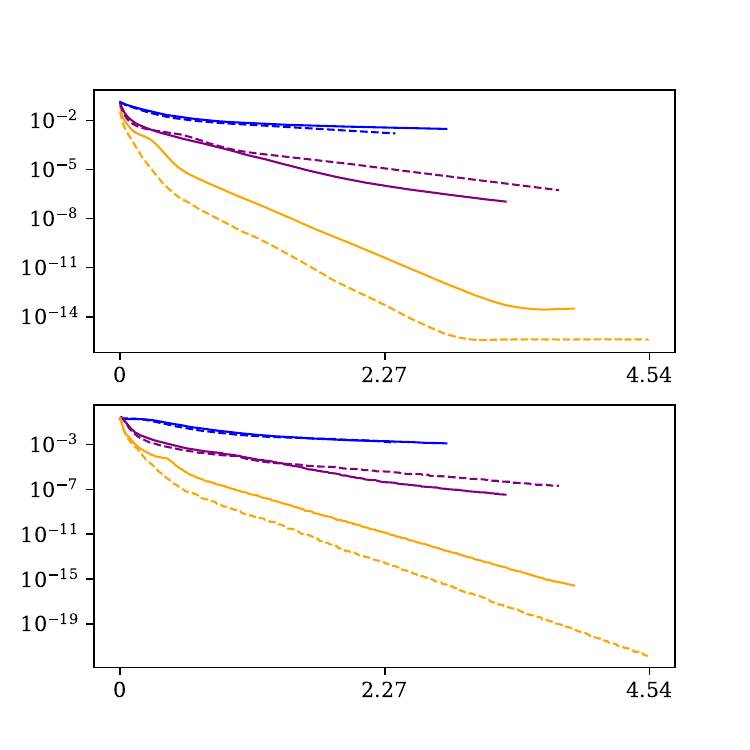}
        & \includegraphics[width=0.75\linewidth, clip=true, trim=10pt 23pt 10pt 40pt]{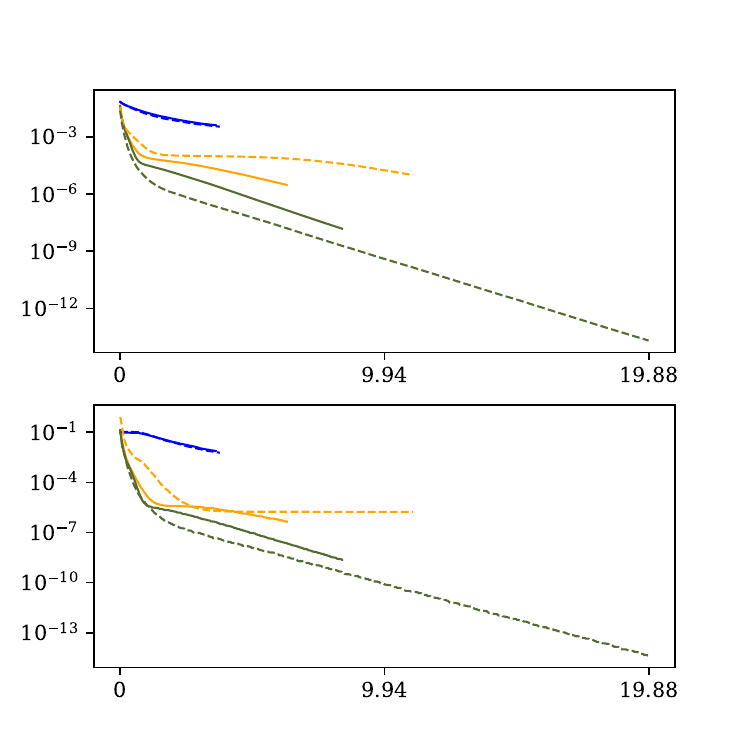} \\
        & Iterations $t$ & Time $t$
    \end{tabular}
    \begin{tablenotes}
    \item[] \scalebox{1.6}{\hspace{1.5cm} \scriptsize $m=1$ \lineblue \qquad $m=10$ \linepurple \qquad $m=50$ \lineorange \qquad $m=100$ \linegreen}
    \end{tablenotes}
    \end{threeparttable}}
    \caption{Comparison in iteration (left) and time (right) of Algorithm~\ref{alg:gRq m-sample} (solid line) and the generalized Rayleigh-Ritz baseline method (dashed line) for $m \in \{1,5,10\}$.}
    \label{fig:exp_5}
\end{figure}

We observe that the Rayleigh-Ritz method 
outperforms our proposed $m$-sampling technique
from Algorithm~\ref{alg:gRq m-sample} just for the case $m = 50$ 
when $d = 100$ and $m = 100$ when $d = 250$. 
However, for $m = 10$ and $m = 50$, respectively, Algorithm~\ref{alg:gRq m-sample} 
performs better and there is a trade-off between the dimensions $d$ and $m$.
Since the computational complexity for finding the leading eigenvector 
up to a machine precision is $\mathcal O(m^2 \log m)$ 
and construction of matrices $A_m^k$, $B_m^k$ 
requires $\mathcal (d^2 m + m^2 d)$, 
the total computational complexity for solving the subproblem 
is $\mathcal O (d^2 m + m^2 d + m^2 \log m)$, 
dominated by $\mathcal O(d^2 m)$ for $d$ much larger than $m$. 
In contrast, 
the complexity of constructing $x^k$ in Algorithm~\ref{alg:gRq m-sample} 
is $\mathcal O(dm)$. 
Figure~\ref{fig:exp_5} implies that we need fewer costly Rayleigh-Ritz iterations 
to reach the same RQE as lighter iterations of Algorithm~\ref{alg:gRq m-sample}.   
For the special case $m = 1$,
both methods are equivalent 
as explored in Remark~\ref{rem:Sub-Rayleigh quotient problem}, 
and the differences in the plots come from the randomization.

\bibliographystyle{elsarticle-num} 
\bibliography{literature}

@misc{bresch2024matrixfreestochasticcalculationoperator,
      title={Matrix-free stochastic calculation of operator norms without using adjoints}, 
      author={Jonas Bresch and Dirk A. Lorenz and Felix Schneppe and Maximilian Winkler},
      year={2024},
      archivePrefix={arXiv},
      eprint={2410.08297v2},
      doi = {10.48550/arXiv.2410.08297}
}

@inproceedings{bhatia2018genijastreaminggeneralizedeigenvector, 
     author = {Bhatia, Kush and Pacchiano, Aldo and Flammarion, Nicolas and Bartlett, Peter L. and Jordan, Michael I.}, 
     title = {Gen-{O}ja: {A} simple and efficient algorithm for streaming generalized eigenvector computation}, 
     year = {2018}, 
     publisher = {Curran Associates, Inc.}, 
     address = {Red Hook, NY, USA}, 
     booktitle = {Proceedings of the 32nd International Conference on Neural Information Processing Systems}, 
     pages = {7016--7025}, 
     numpages = {10}, 
     location = {Montr\'{e}al, Canada}, 
     series = {NIPS'18},
     url = {https://dl.acm.org/doi/pdf/10.5555/3327757.3327805}
}

@article{Ritz1909,
  author    = {Ritz, Walter},
  title     = {Über eine neue {M}ethode zur {L}ösung gewisser {V}ariationsprobleme der mathematischen {P}hysik},
  journal   = {Journal für die reine und angewandte Mathematik},
  volume    = {135},
  pages     = {1-61},
  year      = {1909},
  language  = {ger},
  doi       = {10.1515/crll.1909.135.1},
}

@book{Rayleigh2011, 
    place={Cambridge}, 
    series={Cambridge Library Collection - Physical Sciences}, 
    title={The Theory of Sound}, 
    publisher={Cambridge University Press}, 
    address = {Cambridge, United Kingdom},
    author={3rd Baron Rayleigh, John William Strutt}, 
    year={2011}, 
    doi = {10.1017/cbo9781139058087},
    collection={Cambridge Library Collection - Physical Sciences}
}

@article{oja1982neuronmodelpca,
  title={Simplified neuron model as a principal component analyzer},
  author={Oja, Erkki},
  journal={Journal of Mathematical Biology},
  volume={15},
  number={3},
  pages={267--273},
  year={1982},
  publisher={Springer},
  doi={10.1007/BF00275687}
}

@article{halko2011randomapproximatematrixdecomposition,
author = {Halko, N. and Martinsson, P. G. and Tropp, J. A.},
title = {Finding structure with randomness: {P}robabilistic algorithms for constructing approximate matrix decompositions},
journal = {SIAM Review},
volume = {53},
number = {2},
pages = {217--288},
year = {2011},
doi = {10.1137/090771806},
}

@article{knyazev2001matrixfreekrylovpcgmethod,
author = {Knyazev, Andrew V.},
title = {Toward the optimal preconditioned eigensolver: {L}ocally optimal block preconditioned conjugate gradient method},
journal = {SIAM J. Sci. Comput.},
volume = {23},
number = {2},
pages = {517-541},
year = {2001},
doi = {10.1137/S1064827500366124},
}

@article{tropp2020computationalframework, 
  title={Randomized numerical linear algebra: {F}oundations and algorithms}, 
  volume={29}, 
  DOI={10.1017/S0962492920000021}, 
  journal={Acta Numer.}, 
  author={Martinsson, Per-Gunnar and Tropp, Joel A.},
  year={2020}, 
  pages={403–572}
}

@inproceedings{riedl2024consensusbasedoptimization,
title={How consensus-based optimization can be interpreted as a stochastic relaxation of gradient descent},
author={Konstantin Riedl and Timo Klock and Carina Geldhauser and Massimo Fornasier},
booktitle={ICML 2024 Workshop on Differentiable Almost Everything: Differentiable Relaxations, Algorithms, Operators, and Simulators},
year={2024},
url={https://openreview.net/forum?id=DkBBoKup22}
}

@misc{fornasier2025regularitypositivitysolutionsconsensusbased,
      title={Regularity and positivity of solutions of the consensus-based optimization equation: Unconditional global convergence}, 
      author={Massimo Fornasier and Lukang Sun},
      year={2025},
      archivePrefix={arXiv},
      doi={10.48550/arXiv.2502.01434}, 
}

@article{guglielmi2011fastaglorithmapproxpseudoabscissa,
author = {Guglielmi, Nicola and Overton, Michael L.},
title = {Fast algorithms for the approximation of the pseudospectral abscissa and pseudospectral radius of a matrix},
journal = {SIAM J. Matrix Anal. Appl.},
volume = {32},
number = {4},
pages = {1166-1192},
year = {2011},
doi = {10.1137/100817048},
}

@article{burke2002optimizingmatrixstability,
title = {Two numerical methods for optimizing matrix stability},
author = {James V. Burke and Adrian S. Lewis and Michael L. Overton},
journal = {Linear Algebra Appl.},
volume = {351-352},
pages = {117-145},
year = {2002},
doi = {10.1016/S0024-3795(02)00260-4},
}

@article{knyazev2007block,
  title={Block locally optimal preconditioned eigenvalue {X}olvers ({BLOPEX}) in hypre and {PETS}c},
  author={Knyazev, Andrei and Neymeyr, Klaus and others},
  journal={SIAM J. Sci. Comput.},
  year={2007},
  doi={10.1137/060661624},
  pages = {2224--2239},
  volume = {29},
  number = {5},
}

@article{vonMises1929power,
  author = {von Mises, Richard and Pollaczek-Geiringer, Hilda},
  title  = {Praktische {V}erfahren der {G}lei\-chungsauflösung},
  journal= {Z. Angew. Math. Mech.},
  volume = {9},
  pages  = {152--164},
  year   = {1929},
  doi    = {10.1002/zamm.19290090206},
}

@article{Muntz1913a,
  author  = {M\"untz, Herman},
  title   = {Solution directe de l'équation s\'eculaire et de quelques probl\`emes analogues transcendants},
  journal = {C. R. Acad. Sci.},
  volume  = {156},
  pages   = {43--46},
  year    = {1913},
}

@article{Muntz1913b,
  author  = {M\"untz, Herman},
  title   = {Sur la solution des \'equations s\'eculaires et des \'equations int\'egrales},
  journal = {C. R. Acad. Sci.},
  volume  = {156},
  pages   = {860--862},
  year    = {1913},
}

@article{parlett1974raylightquotientiteration,
 author = {B. N. Parlett},
 journal = {Math. Comp.},
 number = {127},
 pages = {679--693},
 publisher = {American Mathematical Society},
 title = {The {R}ayleigh quotient iteration and some generalizations for nonnormal matrices},
 urldate = {2025-09-21},
 volume = {28},
 year = {1974},
doi = {10.1090/S0025-5718-1974-0405823-3}
}

@book{Lee2012smoothmanifolds,
publisher = {Springer},
series = {Graduate Texts in Mathematics},
volumne = {218},
title = {Introduction to {S}mooth {M}anifolds},
year = {2012},
author = {Lee, John M.},
address = {New York, NY, USA},
booktitle = {Introduction to {S}mooth {M}anifolds},
edition = {2nd},
isbn = {9781489994752},
doi = {10.1007/978-1-4419-9982-5},
}

@article{schanze2023robustraylight, 
title={Robust {R}ayleigh quotient minimization and generalized eigenvalue problems}, 
volume={5}, 
DOI={10.14464/gammas.v5i1.540}, 
number={1}, 
journal={GAMMAS}, 
author={Schanze, Henrik}, 
year={2023}, 
pages = {1--9}
}

@article{kressner2014subspacepseudospectralabscissa,
author = {Kressner, Daniel and Vandereycken, Bart},
title = {Subspace methods for computing the pseudospectral abscissa and the stability radius},
journal = {SIAM J. Matrix Anal. Appl.},
volume = {35},
number = {1},
pages = {292-313},
year = {2014},
doi = {10.1137/120869432},
}

@book{parlett1998symmetriceigenvalueproblem,
author = {Parlett, Beresford N.},
title = {The {S}ymmetric {E}igenvalue {P}roblem},
publisher = {Society for Industrial and Applied Mathematics},
year = {1998},
doi = {10.1137/1.9781611971163},
address = {Philadelphia, PA, USA},
edition   = {},
series = {Classics in Applied Mathematics},
pages = {xxiv + 391}
}

@article{nishioka2025minmaxgeneralizedeigenvalue,
  title={On a minimization problem of the maximum generalized eigenvalue: {P}roperties and algorithms},
  author={Nishioka, Akatsuki and Toyoda, Mitsuru and Tanaka, Mirai and Kanno, Yoshihiro},
  journal={Comput. Optim. Appl.},
  volume={90},
  number={1},
  pages={303-336},
  year={2025},
  publisher={Springer},
  doi={10.1007/s10589-024-00621-4}
}

@article{farrell1996generalizedstabilitytheorypartIInonautonomousoperators,
  author = "Brian F.  Farrell and Petros J.  Ioannou",
  title = "Generalized stability theory. {P}art {II}: {N}onautonomous operators",
  journal = "J. Atmos. Sci.",
  year = "1996",
  publisher = "American Meteorological Society",
  address = "Boston MA, USA",
  volume = "53",
  number = "14",
  doi = "10.1175/1520-0469(1996)053<2041:GSTPIN>2.0.CO;2",
  pages= "2041 - 2053",
}

@article {benzi2021someusenumericalanalysis,
      author = "Benzi, Michele",
      title = "Some uses of the field of values in numerical analysis",
      journal = "Boll. Unione Mat. Ital.",
      year = "2021",
      publisher = "American Meteorological Society",
      address = "Boston MA, USA",
      volume = "14",
      number = "1",
      doi = "10.1007/s40574-020-00249-2",
      pages= "2198-2759",
}

@article{zhaojun2018robustraylight,
author = {Bai, Zhaojun and Lu, Ding and Vandereycken, Bart},
title = {Robust {R}ayleigh quotient minimization and nonlinear eigenvalue problems},
journal = {SIAM J. Sci. Comput.},
volume = {40},
number = {5},
pages = {A3495-A3522},
year = {2018},
doi = {10.1137/18M1167681},
}

@article{ding2017computingrealpseudoabscissa,
author = {Lu, Ding and Vandereycken, Bart},
title = {Criss-cross type algorithms for computing the real pseudospectral abscissa},
journal = {SIAM J. Matrix Anal. Appl.},
volume = {38},
number = {3},
pages = {891-923},
year = {2017},
doi = {10.1137/16M107952X},
}

@article {chan1998transpose,
    AUTHOR = {Chan, Tony F. and de Pillis, Lisette and van der Vorst, Henk},
     TITLE = {Transpose-free formulations of {L}anczos-type methods for nonsymmetric linear systems},
   JOURNAL = {Numer. Algorithms},
    VOLUME = {17},
      YEAR = {1998},
    NUMBER = {1-2},
     PAGES = {51--66},
   MRCLASS = {65F10},
  MRNUMBER = {1628478},
       DOI = {10.1023/A:1011637511962},
}

@book{golub2013,
    AUTHOR = {Golub, Gene H. and Van Loan, Charles F.},
     TITLE = {Matrix {C}omputations},
    SERIES = {Johns Hopkins Studies in the Mathematical Sciences},
   EDITION = {4th},
 PUBLISHER = {JHU Press},
 address = {Baltimore, MD, USA},
      YEAR = {2013},
     PAGES = {xiv+756},
       DOI = {10.56021/9781421407944},
}

@inproceedings{li2014sketching,
  title={On sketching matrix norms and the top singular vector},
  author={Li, Yi and Nguyen, Huy L and Woodruff, David P},
  booktitle={Proceedings of the 25th annual ACM-SIAM symposium on Discrete algorithms},
  pages={1562--1581},
  year={2014},
  organization={SIAM},
  doi = {10.1137/1.9781611973402.114}
}

@book{boumal2023introduction,
  title={An {I}ntroduction to {O}ptimization on {S}mooth {M}anifolds},
  author={Boumal, Nicolas},
  year={2023},
  publisher={Cambridge University Press},
  address = {Cambridge, UK},
  doi={10.1017/9781009166164},
}

@book{Vershynin_2018, 
  address = {Cambridge, UK},
  series={Cambridge Series in Statistical and Probabilistic Mathematics}, 
  title={High-{D}imensional {P}robability: {A}n {I}ntroduction with {A}pplications in {D}ata {S}cience}, 
  publisher={Cambridge University Press}, 
  author={Vershynin, Roman}, 
  year={2018}, 
  collection={Cambridge Series in Statistical and Probabilistic Mathematics},
  doi = {10.1017/9781108231596}
}

@article{fornasier2021consensusbasedhypersurfaces,
author = {Fornasier, Massimo and Huang, Hui and Pareschi, Lorenzo and Sünnen, Philippe},
year = {2021},
month = {01},
pages = {2725-2751},
title = {Consensus-based optimization on hypersurfaces: {W}ell-posedness and mean-field limit},
volume = {30},
journal = {Mathematical Models and Methods in Applied Sciences},
doi = {10.1142/S0218202520500530}
}

@article{fornasier2021consensusbasedsphere, 
author = {Fornasier, Massimo and Huang, Hui and Pareschi, Lorenzo and S\"{u}nnen, Philippe}, 
title = {Consensus-based optimization on the sphere: {C}onvergence to global minimizers and machine learning}, 
year = {2021}, 
publisher = {JMLR}, 
volume = {22}, 
number = {1}, 
pages = {10722--10776}, 
journal = {J. Mach. Learn. Res.}, 
url = {https://dl.acm.org/doi/10.5555/3546258.3546495}
}

@Article{KoMa14,
  title =	 {An adaptive shifted power method for computing generalized tensor eigenpairs},
  author =	 {Tamara G. Kolda and Jackson R. Mayo},
  doi =		 {10.1137/140951758},
  journal =	 {SIAM J. Matrix Anal. Appl.},
  number =	 {4},
  volume =	 {35},
  year =	 {2014},
  pages =	 {1563--1581},
}

@article{li2023stochastic,
  title={Stochastic zeroth-order {R}iemannian derivative estimation and optimization},
  author={Li, Jiaxiang and Balasubramanian, Krishnakumar and Ma, Shiqian},
  journal={Math. Oper. Res.},
  volume={48},
  number={2},
  pages={1183--1211},
  year={2023},
  publisher={INFORMS},
  doi = {10.1287/moor.2022.1302}
}

@article{quellmalz2023slicing,
doi = {10.1088/1361-6420/acf156},
year = {2023},
publisher = {IOP Publishing},
volume = {39},
number = {10},
pages = {105005},
author = {Quellmalz, Michael and Beinert, Robert and Steidl, Gabriele},
title = {Sliced optimal transport on the sphere},
journal = {Inverse Probl.},
}

@article{quellmalz2024slicing,
doi = {10.1007/s10851-024-01206-w},
year = {2024},
publisher = {IOP Publishing},
volume = {66},
number = {6},
pages = {951--976},
author = {Quellmalz, Michael and Buecher, Léo and Steidl, Gabriele},
title = {Parallelly sliced optimal transport on spheres and on the rotation group},
journal = {J. Math. Imaging Vis.},
}

@article{balasubramanian2022zeroth,
  title={Zeroth-order nonconvex stochastic optimization: {H}andling constraints, high dimensionality, and saddle points},
  author={Balasubramanian, Krishnakumar and Ghadimi, Saeed},
  journal={Found. Comput. Math.},
  volume={22},
  number={1},
  pages={35--76},
  year={2022},
  publisher={Springer},
  doi = {10.1007/s10208-021-09499-8}
}

@article{zhou2025inexact,
  title={Inexact {R}iemannian gradient descent method for nonconvex optimization with strong convergence},
  author={Zhou, Juan and Deng, Kangkang and Wang, Hongxia and Peng, Zheng},
  journal={J. Sci. Comput.},
  volume={103},
  number={3},
  pages={96},
  year={2025},
  publisher={Springer},
  doi={10.1007/s10915-025-02913-1}
}

@article{alimisis2024geodesic,
  title={Geodesic convexity of the symmetric eigenvalue problem and convergence of steepest descent},
  author={Alimisis, Foivos and Vandereycken, Bart},
  journal={J. Optim. Theory Appl.},
  volume={203},
  number={1},
  pages={920--959},
  year={2024},
  publisher={Springer},
  doi = {10.1007/s10957-024-02538-8}
}

@article{alimisis2021distributed,
  title={Distributed principal component analysis with limited communication},
  author={Alimisis, Foivos and Davies, Peter and Vandereycken, Bart and Alistarh, Dan},
  journal={NeurIPS},
  volume={34},
  pages={2823--2834},
  year={2021},
  url = {https://proceedings.neurips.cc/paper_files/paper/2021/file/1680e9fa7b4dd5d62ece800239bb53bd-Paper.pdf}
}

@article{zhang2016riemannian,
  title={Riemannian {SVRG}: {F}ast stochastic optimization on {R}iemannian manifolds},
  author={Zhang, Hongyi and J Reddi, Sashank and Sra, Suvrit},
  journal={NeurIPS},
  volume={29},
  year={2016},
  publisher = {Curran Associates, Inc.},
  address = {Red Hook, NY, USA},
  pages = {4599--4607},
  url = {https://proceedings.neurips.cc/paper_files/paper/2016/file/98e6f17209029f4ae6dc9d88ec8eac2c-Paper.pdf}
}

@inbook{trefethen2005spectrapseudospectra,
title = {{IV.} {T}ransient Effects and Nonnormal Dynamics},
booktitle = {Spectra and {P}seudospectra: {T}he {B}ehavior of {N}onnormal {M}atrices and {O}perators},
author = {Lloyd N. Trefethen and Mark Embree},
publisher = {Princeton University Press},
address = "Princeton, NJ, USA",
pages = {133--192},
doi = {10.1515/9780691213101-006},
year = {2005},
}

@article{mitchell2023numradius,
author = {Mitchell, Tim},
title = {Convergence rate analysis and improved iterations for numerical radius computation},
journal = {SIAM J. Sci. Comput},
volume = {45},
number = {2},
pages = {A753-A780},
year = {2023},
doi = {10.1137/21M1455826},
}

@incollection{robbins1971convergence,
 title = {A convergence theorem for non negative almost supermartingales and some applications},
 booktitle = {Optimizing Methods in Statistics},
 publisher = {Academic Press},
 address = {New York, NY, USA},
 pages = {233--257},
 year = {1971},
 isbn = {978-0-12-604550-5},
 doi = {10.1016/B978-0-12-604550-5.50015-8},
 author = {H. Robbins and D. Siegmund},
}

@article{li2011concise,
  title={Concise formulas for the area and volume of a hyperspherical cap},
  author={Li, Shengqiao},
  year={2011},
  journal = {Asian J. Math. Stat.}, 
  volume={4},
  number={1},
  pages={66-70},
  doi = {10.3923/ajms.2011.66.70}
}

@book{athreya2006measure,
  title={Measure {T}heory and {P}robability {T}heory},
  author={Athreya, Krishna B and Lahiri, Soumendra N},
  year={2006},
  publisher={Springer},
  address = {Berlin},
  doi = {10.1007/978-0-387-35434-7},
  ISBN = {978-0-387-35434-7}
}

@inbook{axelsson2001finite,
author = {Axelsson, O. and Barker, V. A.},
title = {{Finite Element Solution of Boundary Value Problems}},
publisher = {Society for Industrial and Applied Mathematics},
year = {2001},
doi = {10.1137/1.9780898719253},
pages = {338-345},
address = {Philadelphia, PA, USA}
}

@article{peterson2017monte,
  title={Monte {C}arlo-based quantitative pinhole {SPECT} reconstruction using a ray-tracing back-projector},
  author={Peterson, Mikael and Gustafsson, Johan and Ljungberg, Michael},
  journal={EJNMMI Phys.},
  volume={4},
  number={1},
  pages={32},
  year={2017},
  publisher={Springer},
  doi = {10.1186/s40658-017-0198-z}
}

@article{zhang2016unmatchedprojback,
author = {Zhang, Hanming and Wang, Linyuan and Li, Lei and Cai, Ailong and Hu, Guoen and Yan, Bin},
title = {Iterative metal artifact reduction for x-ray computed tomography using unmatched projector/backprojector pairs},
journal = {Med. Phys.},
volume = {43},
number = {6 Part 1},
pages = {3019-3033},
doi = {https://doi.org/10.1118/1.4950722},
year = {2016}
}

@Book{buzug2008ct,
  author = {Buzug, Thorsten M.},
  publisher = {Springer},
  title = {Computed tomography from photon statistics to modern cone-beam {CT}},
  year = {2008},
  address = {Berlin, Heidelberg, Germany},
  doi={10.1007/978-3-540-39408-2}
}

@article{xie2015effective,
    doi = {10.1371/journal.pone.0142184},
    author = {Xie, Lizhe AND Hu, Yining AND Yan, Bin AND Wang, Lin AND Yang, Benqiang AND Liu, Wenyuan AND Zhang, Libo AND Luo, Limin AND Shu, Huazhong AND Chen, Yang},
    journal = {PLOS ONE},
    publisher = {Public Library of Science},
    title = {An Effective {CUDA} Parallelization of Projection in Iterative Tomography Reconstruction},
    year = {2015},
    month = {11},
    volume = {10},
    pages = {1-17},
    number = {11},

}

\appendix
\section{Proofs}    \label{sec:proofs}

\noindent
\textbf{Theorem~\ref{th: zero-order convergence}}
Let $f$ be defined by \eqref{problem_riemann}
and $L \ge 2 \norm{\herA}(1 + \kappa(B))$. 
Then,
the sequence $(v^k)_{k=0}^{\infty}$ generated by \eqref{eq: zero order ascent} 
with $\tau_k = 1/\big(2(d+4) L \big)$ and scaling parameters $\mu_k$ 
atisfying $\sum_{k \in \NN} \mu_k^2 < \infty$ fulfills $\grad f(v^k) \to 0$ a.s.\ 
as $k\to \infty$ 
and there exists a constant $C > 0$ depending on $L$ and $d$ such that  
\[
    \min_{k=0,\ldots,n} \EE [\norm{\grad f(v^k)}^2] 
    \le \frac{8(d+4) L}{n+1}[\mathcal R(A,B) - f(v^0) + C \sum_{k=0}^{\infty} \mu_k^2].
\]
\begin{proof}
    We note that maximization of $f$ is equivalent to minimization of $-f$. 
    Let consider Gaussian random vectors $X^k = (X_1^k, \ldots, X_m^k)$ 
    from $\widehat{\grad}_m f(v^k)$, 
    given by \eqref{eq: m-sample gradient}, 
    in the $k$th iteration of zeroth-order gradient ascent \eqref{eq: zero order ascent}.
    The proof of \cite[Theorem A.1]{li2023stochastic} establishes inequality
    \[
        \EE_{x^k \sim X^k \mid V^K = v^k}[-f(v^{k+1})] 
        \le -f(v^k) - \tfrac{\tau_k}{4} \norm{\grad f(v^k)}^2 + \mu_k^2 C, 
    \]
    where $C = \frac{L}{16 (d+4)^2}[(d+3)^4 + (d+6)^4 + (d+6)^3]>0$. 
    We add $\mathcal R(A,B)$ on both sides 
    to get nonnegative terms $\mathcal R(A,B) - f(v^k)$,
    \begin{equation}\label{eq: zeroth order proof tech}
        \EE_{x^k \sim X^k \mid V^K = v^k}[\mathcal R(A,B)-f(v^{k+1})] 
        \le \mathcal R(A,B)-f(v^k) - \tfrac{\tau_k}{4} \norm{\grad f(v^k)}^2 + \mu_k^2 C, 
    \end{equation}
    Since $\sum_{k =0}^{\infty} \mu_k^2 < \infty$, 
    we can apply near-supermartingale convergence results by Siegmund and Robbins \cite{robbins1971convergence} 
    giving that the series $\sum_{k = 0}^{\infty} \norm{\grad f(v^k)}^2$ 
    converge a.s and,
    thus, the summand $\norm{\grad f(v^k)}^2$ vanishes as $k \to \infty$ a.s. 
    For the convergence rate, 
    we take the expectation in \eqref{eq: zeroth order proof tech}, 
    which gives 
    \begin{align*}
        & \min_{k = 0,\ldots, n} \EE[\norm{\grad f(v^k)}^2]
        \le \frac{1}{n+1} \sum_{k=0}^n \EE[\norm{\grad f(v^k)}^2] \\
        & \qquad \le \frac{8 (d+4) L}{n+1} \sum_{k=0}^n \bigg[ \EE[\mathcal R(A,B) -  f(v^k) ] - \EE[\mathcal R(A,B) -  f(v^{k+1})] + C \mu_k^2 \bigg] \\
        & \qquad = \frac{8 (d+4) L}{n+1} \bigg[ \EE[ \mathcal R(A,B)- f(v^0) ] - \EE[\mathcal R(A,B) - f(v^{K})] + C \sum_{k=0}^n \mu_k^2 \bigg] \\
        & \qquad \le \frac{8 (d+4) L}{n+1} \bigg[ \mathcal R(A,B)- f(v^0) + C \sum_{k=0}^\infty \mu_k^2 \bigg],
    \end{align*}
    where in the last step we used $\mathcal R(A,B) - f(v^{K}) \ge 0$.
    
    Note that $\sum_{k = 0}^{\infty} \tfrac{\nu}{4} \norm{\grad f(v^k)}^2 < \infty$ 
    also implies a similar a.s. convergence rate 
    \[
        \min_{k = 0,\ldots, n} \norm{\grad f(v^k)}^2
        \le \frac{1}{n+1} \sum_{k=0}^n \norm{\grad f(v^k)}^2
        \le \frac{1}{n+1} \sum_{k=0}^\infty \norm{\grad f(v^k)}^2,
        \ \text{a.s.}
    \]
    Yet, the series' value 
    is a random variable and its dependence on $d$ and $L$ is unclear. 
\end{proof}

\noindent
\textbf{Lemma~\ref{l: direction main}}
    For a fixed $v \in  \sphere_B^{d-1}$  and $\tilde X \sim \cN(0, I_d)$, the random variable
    $X \coloneq P_v \tilde X/\|P_v \tilde X\|$
    is uniformly distributed 
    on $T_{v} \cap \sphere^{d-1} \simeq \sphere^{d-2}$.
    Moreover, it holds
    \begin{align*}
        \EE_{x \sim X}[x x^\tT] 
       = \frac{1}{d-1} P_{v} = \frac{1}{d-1} \Bigl(I_d - \tfrac{Bv}{\norm{Bv}}\tfrac{(Bv)^\tT}{\norm{Bv}}\Bigr) .
    \end{align*}
\begin{proof}
    For an orthonormal basis $\{ u_i \in \RR^d: i=1,\ldots,d-1 \}$ of $T_v$ and $u_d \coloneqq Bv / \norm{Bv} \in T_v^\perp$, set  
    $U \coloneqq ( u_1 \,  \ldots \, u_d) = (U_v \, u_d)$.
    Then it holds
    \[
        \tilde X 
        = U U^\tT \tilde X = U \tilde Z , 
        \quad \tilde Z \coloneqq U^\tT \tilde X \sim \cN(0, I_d).
    \]
    Then $P_v \tilde X = U_v Z$ 
    with $Z \coloneqq (\tilde Z_1, \ldots, \tilde Z_{d-1})^\tT \sim \cN(0, I_{d-1})$ 
    and we know by \cite[Ex.~3.3.7]{Vershynin_2018} 
    that $Y \coloneqq Z / \norm{Z}$ follows a uniform distribution 
    on $\sphere^{d-2}$.
    Finally, since
    \begin{equation} \label{eq: x distribution}
        X = \frac{P_{v} \tilde X}{\norm{P_{v} \tilde X}}
        = \frac{U_v Z}{\norm{U_v Z}} = U_v \frac{Z}{\norm{Z}} = U_v Y
        \end{equation}
    and $U_{v} \colon \RR^{d-1} \to T_v$ is an isometry, 
    this yields the first assertion.
    Next, we use the representation $X = U_v Y$ to compute the covariance of $X$. Since $Y$ is uniform distribution on $\sphere^{d-2}$ and does not depend on $v$, we get
    \begin{align*}
        \EE_{x\sim X}[x x^\tT ] 
        & = \EE_{y \sim Y}[U_{v} y (U_{v} y)^\tT ] 
        = U_{v} \, \EE_{y \sim Y}[ y y^\tT] \, U_{v}^\tT.
    \end{align*}
    The random vector $\sqrt{d-1} \, Y$ is isotropic, 
    see \cite[Def.~3.2.1, Ex.~3.3.1]{Vershynin_2018}, 
    which means that the covariance of $\EE_{y \sim Y}[ y y^\tT] = \frac{1}{d-1} I_{d-1}$.
    Furthermore, 
    by the properties of the orthogonal projection, we have
    \[
        P_{v} w = \sum_{i = 1}^{d-1} \scp{w}{u_i}u_i =  U_{v}  U_{v}^\tT w,
        \quad \text{for all} \quad w \in \RR^{d}.
    \]
    Consequently,
    \[
        \EE_{x\sim X}[x x^\tT ] 
        = \tfrac{1}{d-1}U_{v} I_{d-1} U_{v}^\tT = \tfrac{1}{d-1} P_{v}.
        \qedhere
    \]
\end{proof}

\noindent
\textbf{Lemma~\ref{l: direction main_iii}}
 Let $M$ be an affine subspace in $\RR^d$ of dimension $r$,
    and     $$\varphi : M \setminus\{0\} \to \sphere^{d-1}, x \mapsto x/\norm{x}.$$
    \begin{itemize}
        \item[{\textrm i)}]
        If $0 \in M$ and $r < d$, then $\varphi(M)$ is of measure zero  with respect to  the surface measure $\sigma_{\sphere^{d-1}}$.
      \item[{\textrm ii)}]  
    If $0 \not \in M$ and $r < d-1$, then
        $\varphi(M)$ is of measure zero 
        with respect to $\sigma_{\sphere^{d-1}}$.
        \end{itemize}

\begin{proof}
For i) If $0 \in M$,
    then $M$ is an linear subspace of dimension $r$ in $\RR^d$,
    and $\varphi(M \setminus\{0\}) = M \cap \sphere^{d-1}$
    is a $r-1$-dimensional submanifold. It has measure zero in $\sphere^{d-1}$ 
    if $r-1 < d-1$, cf.~\cite[Cor.~6.12]{Lee2012smoothmanifolds}.
\\
For ii)
If $0 \not \in M$,
    then $M = m + W$,
    where $W$ is a linear subspace of dimension $r$ and $m \not\in W$.
    then $\varphi : M \to \sphere^{d-1}$ is a smooth function on $M$
    and $\varphi(M)$ is a submanifold of dimension $r$.
    By \cite[Prop.~6.5]{Lee2012smoothmanifolds},
    $\varphi(M)$ is of measure zero in $\sphere^{d-1}$
    if $r < d-1$.
\end{proof}

\noindent
\textbf{Lemma~\ref{l:uniform_BS}}
    Let $\varepsilon > 0$ 
    and $v, v^* \in \sphere_B^{d-1}$.
    Consider
    \begin{align*}
        \mathcal D_v 
        \coloneqq 
        \left\{ x \in T_{v} \cap \sphere^{d-1}
        \;\middle|\;
        \;\exists \tau \in \RR : 
        R_{v}(\tau x)\in \mathbb B_{\varepsilon,B}(\pm v^*) \cap \sphere_B^{d-1}\right\},
    \end{align*}
    where $\mathbb B_{\varepsilon,B}(\pm v^*) \coloneqq \mathbb B_{\varepsilon,B}(v^*) \cup \mathbb B_{\varepsilon,B}(-v^*)$
    is the union of the $\varepsilon$-balls with respect to $\normB{\cdot}$
    around $v^*$ and $-v^*$. 
    Then, for the surface measure of the unit sphere $\sigma_{Bv}$ in $T_{v}$,
    there exists $p = p(\varepsilon, d, B) > 0$ such that 
    \begin{align*}
        \inf_{v \in \sphere_B^{d-1}} 
        \; \sigma_{Bv}(\mathcal D_{v}) \geq p.
    \end{align*}

\begin{figure}
    \includegraphics[width=\linewidth, clip=true, trim=10pt 18pt 40pt 8pt]{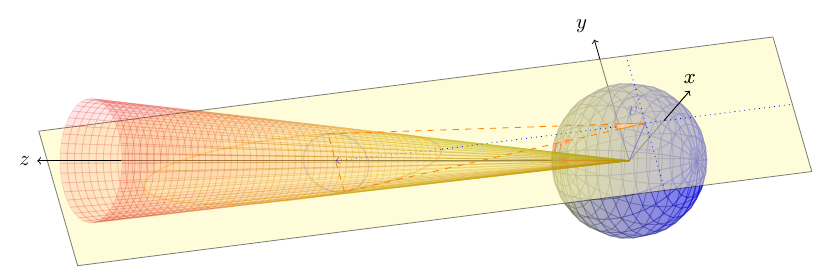}
    \caption{%
    Visualization in the three-dimensional case of the construction in the proof of Lemma~\ref{l:uniform_BS}. Consider a cone formed by $\tilde u \in \mathbb B_{\varepsilon,B}(v^*) \cap \sphere_B^{d-1}$ for $v^*$ aligned with $z$-axis in the figure. Its intersection with the tangent space $T_v$ forms an ellipse, which contains a Euclidean ball $\mathbb B_R(z)$. The retraction of points in this ball lies in $B_{\varepsilon,B}(v^*)$. Therefore, we look at the cone $\mathcal K$ of vectors in $T_v$ that point to $\mathbb B_R(z)$, defined by an angle $\alpha$ (in red). We show that $\sin(\alpha)$ is uniformly bounded from below for all $v \in \sphere_B^{d-1}$ by a positive constant that yields a desired probability $p$.
    }
    \label{fig:sphere_tangent_cone}
\end{figure}

\begin{proof}
    We first proof the bound for the set 
    \begin{align*}
    \tilde{\mathcal D}_{B,v}   
        \coloneqq 
        \left\{ x \in T_v \cap \sphere^{d-1}
        \;\middle|\;
        \;\exists \tau \in \RR : 
        \tfrac{v + \tau x}{\sqrt{1 + \tau^2 \normB{x}^2}} \in \cS \cap \sphere_B^{d-1}\right\} \subseteq \mathcal D_{B,v},
    \end{align*}
    where $\cS \subseteq \mathbb B_{\varepsilon,B}(\pm v^*)$. Then,
    \[
    \inf_{v \in \sphere_B^{d-1}} 
        \; \sigma_{Bv}(\mathcal D_{B,v})
        \ge \inf_{v \in \sphere_B^{d-1}} 
        \; \sigma_{Bv}(\tilde{\mathcal D}_{B,v}) \ge p_{\varepsilon,d,B}.
    \]
    In the following, without loss of generality, 
    we construct $\cS$ for $\scpB{v}{v^*} \geq 0$ and otherwise, 
    it can be done analogously by replacing $v^*$ with $-v^*$. Therefore, we have
    \[
    \normB{v - v^*}^2 = \normB{v}^2 + \normB{v^*}^2 - 2 \scpB{v}{v^*} \le 2.
    \]
    
    If $\varepsilon \ge \normB{v - v^*}$, it suffices to take $\cS = \{v\}$, 
    which satisfies $\cS \subset \mathbb B_{\varepsilon,B}$ 
    and for all $x \in T_v \cap \sphere^{d-1}$ and $\tau = 0$ we get
    \[
    	\frac{v+ \tau x}{\normB{v+ \tau x}}
	= \frac{v+ \tau x}{\sqrt{1 + \tau^2 \normB{x}^2}} = v \in \cS,
    \]
    so that $\sigma_{Bv}(\tilde{\mathcal D}_{B,v}) = 1$.

    Thus, we focus on the case $\varepsilon < \normB{v - v^*} \le \sqrt{2}$. 
    In this case, we take $\cS = \mathbb B_{r,B}(\tilde u)$ 
    where $r \coloneqq \varepsilon^2 / 8$,  $c \coloneqq \varepsilon / 4 \sqrt 2$ and
    \begin{equation}\label{eq: def u appendix}
        \tilde u \coloneqq \frac{u}{\normB{u}}
        \quad \text{with} \quad
        u \coloneqq v^* - c(v^* - v) 
        = (1 - c) v^* + c v.
    \end{equation}
    Next, we step-by-step show that this choice of $\cS$ satisfies all desired properties. Since $B \in \pd$, the norm $\normB{\cdot}$ is strongly. By definition, $u$ is a convex combination of $v$ and $v^*$ giving $\normB{u} < 1$. Furthermore,
    \begin{equation*}
        \normB{\tilde u - u}
        = \bigg\| \frac{u}{\normB{u}} - u \bigg\|_B
        = \normB{u}\left(\frac{1}{\normB{u}} - 1\right)
        = 1 - \normB{u}.
    \end{equation*}
    We bound
    \begin{align*}
        \normB{u} \geq \norm{v^*} - \normB{u - v^*} 
        \stackrel{\eqref{eq: def u appendix}}{=} 1 - c \normB{v - v^*}
        \geq 1 - \sqrt 2 c,
    \end{align*}
    which yields
    \begin{align*}
        \normB{\tilde u - v^*} 
        \leq 
        \normB{u - v^*} + \normB{\tilde u - u}
        \leq 
        \sqrt 2 c + 1 - \normB{u}
        \leq 2 \sqrt 2 c = \tfrac{\varepsilon}{2}.
    \end{align*}
    and $\cS = \mathbb B_{r,B}(\tilde u) \subset \mathbb B_{\varepsilon}(v^*)$.

    For $\tilde u$, 
    by \eqref{eq: def u appendix} and $\normB{u} < 1$ we have
    \begin{align}
        \label{eq:lower_bound_inner_product}
        \scpB{\tilde u}{v}
        & =
        \frac{\scpB{u}{v}}{\normB{u}}
        = 
        \frac{\scpB{v^*(1 - c) + cv}{v}}{\normB{u}} \notag \\
        &= 
        \frac{1}{\normB{u}}((1 - c)\scpB{v^*}{v} + c)
        \geq 
        \frac{c}{\normB{u}}
        \geq 
        c > 0.
    \end{align}
    Thus, we define $z \coloneqq \frac{1}{\scpB{\tilde u}{v}} \tilde u - v \neq 0$ satisfying
    \[
        \scp{z}{Bv} 
        =  \scpB{z}{v} 
        = \frac{\scpB{\tilde u}{v}}{\scpB{\tilde u}{v}}  - \scpB{v}{v} 
        = 1 - 1 
        = 0,
    \]
    so that $z \in T_v$.  
    Let us consider $\mathbb B_{R}(z)$, a Euclidean ball of radius $R \coloneqq r / 2 \norm{B}^{1/2} \scpB{\tilde u}{v}$. 
    For all points $z + w \in \mathbb B_{R}(z) \cap T_v$ we have
    \begin{align*}
        \scpB{\tilde u}{v}(v + z + w)  = \tilde u + \scpB{\tilde u}{v} w.
    \end{align*}
    and
    \[
        \frac{v + z + w}{\sqrt{1 + \normB{z +w}^2}} 
        = \frac{v + z + w}{\normB{v + z + w}} 
        = \frac{\tilde u + \scpB{\tilde u}{v} w}{\normB{\tilde u + \scpB{\tilde u}{v} w}}. 
    \]
    Using that for all $x,y \in \RR^d$ with $\normB{y}=1$ it holds
    \begin{align*}
        \bigg\|\frac{x}{\normB{x}} - y\bigg\|_B
        & \le \bigg\| \frac{x}{\normB{x}} - x \bigg\|_B + \normB{x - y} 
        = \bigg| \frac{1}{\normB{x}} - 1 \bigg| \normB{x}  + \normB{x - y}\\
        & = | \normB{x} - \normB{y} | + \normB{x - y}
        \le 2 \normB{x - y}
    \end{align*}
    we obtain
    \[
        \bigg\| \frac{v + z + w}{\sqrt{1 + \normB{z +w}^2}} - \tilde u \bigg\|_B
        \le 2 \normB{\tilde u + \scpB{\tilde u}{v} w - \tilde u}
        \le 2 \scpB{\tilde u}{v} \norm{B}^{1/2} R
        \le r. 
    \]
    Hence, 
    $R_v(w) \in \cS$ for all $w\in \mathbb B_{R}(z) \cap T_v$
    If $0 \in B_{R}(z) \cap T_v$, 
    then for all $x \in \sphere^{d-1} \cap T_v$ we can find $\tau \in \RR$ 
    such that $\tau x \in \mathbb B_{R}(z) \cap T_v$
    and $\sigma_{Bv}(\tilde{\mathcal D}_{B,v}) = 1$.
    Otherwise, 
    we consider a hyperspherical cap 
    \[
        \mathcal K = \{ x \in \sphere^{d-1} \cap T_v \mid \exists \tau \in \RR
        \ \text{such that} \ 
        \tau x \in \mathbb B_{R}(z) \cap T_v \} \subseteq \tilde{\mathcal D}_{B,v}
    \]
    It is defined by an angle $\alpha$ with 
    \[
        \sin(\alpha) 
        = \frac{R}{\norm{z}} 
        = \frac{\varepsilon^2}{16 \norm{B}^{1/2} \scpB{\tilde u}{v}\norm{z}}.
    \]
    By \eqref{eq:lower_bound_inner_product} we have
    \begin{align*}
        \scpB{\tilde u}{v}^2 \norm{z}^2 
        & = \norm{\tilde u - \scpB{\tilde u}{v} v}^2
        \le \lambda_d(B)^{-1} \normB{\tilde u - \scpB{\tilde u}{v} v}^2 \\
        & = \lambda_d(B)^{-1}(1 - \scpB{\tilde u}{v}^2) \le \lambda_d(B)^{-1}(1-c^2) 
    \end{align*}
    giving 
    \[
        \sin(\alpha) 
        \ge \frac{\varepsilon^2}{16 \kappa(B)^{1/2} \sqrt{1- \varepsilon^2 / 32}} 
        \eqqcolon \sin(\beta_{\varepsilon,B}).
    \]
    Then, 
    the hyperspherical cap $\mathcal K_{\varepsilon,B}$ 
    defined by the angle $\beta_{\varepsilon,B}$ is the subset of $\mathcal K$. 
    By \cite{li2011concise}, 
    its normalized area is given by the incomplete Beta function
    $\mathrm{B}(x, \alpha,\beta) \coloneqq \int_0^x t^{\alpha-1}(1-t)^{\beta-1} \dx t$
    \[
        \sigma_{Bv}(\mathcal K_{\varepsilon,B}) 
        = \mathrm{B}(\sin(\beta_{\varepsilon,B}),\tfrac{d-2}{2}, \tfrac{1}{2})
        = \int_{0}^{\sin(\beta_{\varepsilon,B})} t^{\frac{d}{2}-2} (1 - t)^{-\frac{1}{2}} \dx t \eqqcolon p_{\varepsilon,d,B}
    \]
    Therefore, we get
    \[
        \sigma_{Bv}(\tilde{\mathcal D}_{B,v}) 
        \ge \sigma_{Bv}(\mathcal K)
        \ge \sigma_{Bv}(\mathcal K_{\varepsilon,B})
        = p_{\varepsilon,d,B}.
        \qedhere
    \]
\end{proof}

\noindent
\textbf{Lemma~\ref{lem:bound_B_kappa}}
    Let $B \in \pd, v \in \sphere_B^{d-1}$ and $y \in \RR^d$.
    Then we have 
    \begin{equation*}
        \norm{(I_d - Bvv^\tT)y} \leq \sqrt{\kappa(B)} \norm{(I_d - \tfrac{Bv (Bv)^\tT}{\norm{Bv}^2}) y}.
    \end{equation*}
    
\begin{proof}
    We define $u \coloneqq Bv / \norm{Bv} \in \sphere^{d-1}$,
    such that 
    \begin{equation*}
        1 = \normB{v} = \scp{v}{Bv} = \scp{v}{u}\norm{Bv}    
    \end{equation*}
    and $\scp{v}{u} = \norm{Bv}^{-1}$.
    Then, we get
    \begin{equation*}
        B vv^\tT u = Bv \scp{u}{v} = \tfrac{Bv}{\norm{Bv}} = u.
    \end{equation*}
    We decompose any $y = \alpha u + P_{v} y$,
    for some $\alpha \in \RR$,
    which yields
    \begin{align*}
        (I_d - B v v^\tT) y
        & = \alpha (I_d - B v v^\tT) u  
        + (I_d - B v v^\tT) P_{v} y \\
        & = \alpha (u - B vv^\tT u) + (I_d - B v v^\tT) P_{v} y
        = (I_d - B v v^\tT) P_{v} y,
    \end{align*}
    and hence
    \begin{align}\label{eq: proj bound tech}
        \norm{(I_d - B v v^\tT) y}
        \leq \sup_{\substack{w \in T_v \\ \norm{w} = 1}}
        \norm{(I_d - B v v^\tT) w} \cdot \norm{P_{v} y}.
    \end{align}
    Expanding the norm in the supremum gives
    \begin{align*}
        \sup_{\substack{w \in T_v \\ \norm{w} = 1}}
        \norm{(I_d - B v v^\tT) w}^2
        &= \sup_{\substack{w \in T_v \\ \norm{w} = 1}}
        \norm{w}^2 - 2 \scp{v}{w} \scp{B v}{w} + \scp{v}{w}^2 \norm{B v}^2 \\
        &= \sup_{\substack{w \in T_v \\ \norm{w} = 1}} 
        1 + \scp{v}{w}^2 \norm{B v}^2.
    \end{align*}
    Since $\scp{v}{w}^2 = \scp{P_{v}v}{w}^2$ for all $w \in T_v$, the supremum is attained at $w = P_{v} v/ \norm{P_{v}v}$ yielding
    \begin{align*}
        \sup_{\substack{w \in T_v \\ \norm{w} = 1}}
        \scp{v}{w}^2
        &  = \norm{P_{v}v}^2
        = \norm{v - \scp{v}{u}u}^2
        = \norm{v}^2 - \scp{v}{u}^2
        = \norm{v}^2 - \tfrac{1}{\norm{B v}^2}.
    \end{align*}
    Hence,
    we have 
    \begin{align}\label{eq: proj bound tech 2}
        \sup_{\substack{w \in T_v \\ \norm{w} = 1}}
        \norm{(I_d - B v v^\tT) w}^2
        = \underbracket{1 + \norm{B v}^2 (\norm{v}^2 - \tfrac{1}{\norm{B v}^2})}_{= \norm{v}^2\norm{B v}^2}
        \leq \kappa(B).
    \end{align}
    Combining \eqref{eq: proj bound tech} and \eqref{eq: proj bound tech 2} gives the assertion.
\end{proof}

\section{Extension to the complex case}\label{sec: complex}

In this appendix, we extend our method to complex matrices. 
That is, our goal is to find the maximum of the generalized real Rayleigh quotient
\begin{equation}
    \label{eq:cgRq}
    \mathcal R(A, B) 
    = \max_{v \in \CC^d\setminus\{0\}} \RE[ r(A, B, v) ]
    = \max_{v \in \CC^d\setminus\{0\}} \frac{\RE[\scp{v}{Av}]}{\scp{v}{B v}}
\end{equation}
for complex vectors and complex inner product
$\scp{x}{y} = \sum_{i=1}^d y_i \bar x_i$. Here $A \in \CC^{d\times d}$
and $B \in \CC^{d\times d}$ is a \textit{Hermitian} positive definite matrix.
Recall that a matrix is Hermitian if $B^* = B$ 
with conjugate transpose $B^* \coloneqq \bar B^\tT$.
Just as in the real case, 
maximizing \eqref{eq:cgRq} is equivalent to finding the leading real eigenvalue 
of $B^{-1}\herA$ with $\herA \coloneqq \tfrac{1}{2}(A + A^*)$ \cite{trefethen2005spectrapseudospectra}.

To apply our algorithms for the complex problem \eqref{eq:cgRq}, 
we identify a complex vector $v \in \CC^{d}$ with real vector $\tilde v = (\RE v, \IM v) \in \RR^{2d}$ and a matrix $M \in \CC^{d \times d}$ with 
$\tilde M \in \RR^{2d \times 2d}$ so that $\widetilde{Mv} = \tilde M \tilde v$ for all $v \in \CC^d$. Then, for all $x, v \in \CC^d$ and $M \in \CC^{d \times d}$ we have
\[
\langle \tilde x, \tilde M \tilde v \rangle = \RE[\scp{x}{Mv}]
\]
and 
\[
\mathcal R(A, B) 
= \max_{v \in \CC^d\setminus\{0\}} \frac{\RE[\scp{v}{Av}]}{\scp{v}{Bv}}
= \max_{\tilde v \in \RR^{2d}\setminus\{0\}} \frac{ \langle \tilde v, \tilde A \tilde v \rangle}{ \langle \tilde v, \tilde B \tilde v \rangle}.
\]
Consequently, 
we compute $\tilde v \in \sphere^{2d-1}_B$ 
by applying Algorithms~\ref{alg:gRq} or~\ref{alg:gRq m-sample} 
for $(\tilde A, \tilde B)$ and constructing the corresponding $v \in \CC^d$. 
Moreover, the convergence guarantees extend to the complex case.  

\end{document}